\numberwithin{equation}{section}
\theoremstyle{plain}
\newtheorem{theorem}{Theorem}[section]
\newtheorem*{theorem*}{Theorem}
\newtheorem{lemma}[theorem]{Lemma}
\newtheorem{prop}[theorem]{Proposition}
\theoremstyle{definition}
\newtheorem{defn}[theorem]{Definition}
\newtheorem{assump}{Assumption}
\newtheorem{example}[theorem]{Example}
\theoremstyle{remark}
\newtheorem*{remark}{Remark}
\DeclareMathOperator{\supp}{supp}
\newcommand{\ie}{\textit{i.e.}}
\newcommand{\ud}{\,\mathrm{d}}
\newcommand{\RR}{\mathbb{R}}
\newcommand{\NN}{\mathbb{N}}
\newcommand{\TT}{\mathrm{T}}
\newcommand{\CC}{\mathbb{C}}
\newcommand{\Or}{\mathcal{O}}
\newcommand{\wt}[1]{\widetilde{#1}}
\newcommand{\mc}[1]{\mathcal{#1}}
\newcommand{\ms}[1]{\mathscr{#1}}
\newcommand{\veps}{\varepsilon}
\newcommand{\abs}[1]{\left\lvert#1\right\rvert}
\newcommand{\norm}[1]{\left\lVert#1\right\rVert}
\newcommand{\average}[1]{\left\langle#1\right\rangle}
\newcommand{\I}{\imath}
\newcommand{\Id}{\mathrm{Id}}
\newcommand{\nn}{\nonumber}
\newcommand{\dsp}{\displaystyle}
\newcommand{\barint}{\kern4pt \raise3.4pt\hbox{\vrule height.6pt
    width7pt} \kern-11pt \int}
\begin{document}

\title[Convergence of frozen Gaussian approximation]{Convergence of
  frozen Gaussian approximation for high frequency wave propagation}

\author{Jianfeng Lu}
\address{Department of Mathematics \\
  Courant Institute of Mathematical Sciences \\
  New York University \\
  New York, NY 10012 \\
  email: jianfeng@cims.nyu.edu }

\author{Xu Yang}
\address{Department of Mathematics \\
  Courant Institute of Mathematical Sciences \\
  New York University \\
  New York, NY 10012 \\
  email: xuyang@cims.nyu.edu }

\date{\today}

\thanks{}

\begin{abstract}
  The frozen Gaussian approximation provides a highly efficient
  computational method for high frequency wave propagation. The
  derivation of the method is based on asymptotic analysis. In
  this paper, for general linear strictly hyperbolic system, we
  establish the rigorous convergence result for frozen Gaussian
  approximation. As a byproduct, higher order frozen Gaussian
  approximation is developed.
\end{abstract}

\maketitle

\section{Introduction}

This paper is devoted to the proof of convergence of the frozen
Gaussian approximation, introduced in \cites{LuYa:CMS,
LuYa:MMS}, for high frequency wave propagation.  Numerical
computation of high frequency wave propagation is an important
problem concerned in many areas, such as seismic imaging,
electromagnetic radiation and scattering, and so on. The problem is
challenging for direct numerical discretization because the mesh
size has to be chosen comparable to the wavelength or even smaller
in order to get accurate solution, however the domain size is
usually large so that the computational cost is formidably
expensive. To look for efficient computation, the development of
asymptotics-based algorithms has received a great amount of
attention in recent years.

The investigations have been focused on two methods: geometric optics
and Gaussian beam method. The computational methods based on geometric
optics (see the review articles \cites{EnRu:03, Ru:07}, and references
therein) solve eikonal and transport equations instead of original
hyperbolic system. This makes the choice of mesh size
frequency-independent, and hence the methods are quite
efficient. However, eikonal equation can develop singularities that
makes the asymptotic approximation break down at caustics. To overcome
this problem, Popov introduced Gaussian beam method in \cite{Po:82},
which constructs solution near geometric rays using Taylor
expansion. Ralston \cite{Ra:82} showed that the method gives a valid
approximation at caustics.  One shortcoming of Gaussian beam method is
however, since it is based on Taylor expansion, the constructed beam
solution has to stay near geometric rays to maintain
accuracy. Therefore the method loses accuracy when the solution
spreads \cites{MoRu:10, QiYi:app1, LuYa:CMS}. The problem is one of
the major concerns in application of Gaussian beam methods in areas
like seismic imaging; see for example \cites{CePoPs:82, Hi:90}.

The frozen Gaussian approximation was proposed in our previous works
\cites{LuYa:CMS, LuYa:MMS} to overcome the problems of the
aforementioned methods: Geometric optics breaks down around
caustics; Gaussian beam method loses accuracy when beam spreads. The
frozen Gaussian approximation is based on asymptotic analysis on
phase plane, motivated by the Herman-Kluk propagator developed in
chemistry literature \cites{HeKl:84, Ka:94, Ka:06}. It provides a
highly efficient computational tool for computing high frequent
solution to linear hyperbolic system.

Our previous work \cites{LuYa:CMS, LuYa:MMS} developed numerical
algorithms for frozen Gaussian approximation in both Lagrangian and
Eulerian framework. Numerical examples indicate the efficiency and
accuracy of the method. In the current work, we prove the
convergence of frozen Gaussian approximation.  Denote the propagator
of the first order strictly hyperbolic system as $\mc{P}_t$, and the
propagator of the $K$th-order frozen Gaussian approximation as
$P_{t, K}^{\veps}$. The main result of this work is the following
theorem.
\begin{theorem*}
  Let $\{u_0^{\veps}\}_{\veps > 0}$ be a family of
  \emph{asymptotically high frequency} initial conditions,  with
  $\norm{u_0^{\veps}}_{L^2(\RR^3)} \leq M$, then we have
  \begin{equation*}
    \norm{\mc{P}_t u_0^{\veps} - P_{t, K}^{\veps} u_0^{\veps}}_{L^2(\RR^3)} \lesssim
    \veps^K M.
  \end{equation*}
\end{theorem*}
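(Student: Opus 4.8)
The plan is to run an energy estimate via Duhamel's principle, exploiting that for a strictly hyperbolic system the exact propagator $\mc{P}_t$ is bounded on $L^2(\RR^3)$, uniformly for $t$ in a compact interval (after symmetrization this follows from the standard energy estimate). Let $L$ denote the generator, so $\mc{P}_t = e^{tL}$, and set $w^\veps(t) = \mc{P}_t u_0^\veps - P_{t,K}^\veps u_0^\veps$. Introducing the \emph{residual}
\[
  r_K^\veps(t) := \bigl(\partial_t - L\bigr) P_{t,K}^\veps u_0^\veps,
\]
Duhamel's formula gives
\[
  w^\veps(t) = \mc{P}_t\bigl(u_0^\veps - P_{0,K}^\veps u_0^\veps\bigr) + \int_0^t \mc{P}_{t-s}\, r_K^\veps(s)\, \ud s,
\]
so that $\norm{w^\veps(t)}_{L^2} \lesssim \norm{u_0^\veps - P_{0,K}^\veps u_0^\veps}_{L^2} + \int_0^t \norm{r_K^\veps(s)}_{L^2}\,\ud s$. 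The first term is controlled by the frozen-Gaussian reconstruction identity, which recovers the initial data up to an $O(\veps^K)$ error and hence is $\lesssim \veps^K M$; everything else reduces to the uniform-in-$s$ bound $\norm{r_K^\veps(s)}_{L^2} \lesssim \veps^K M$.

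To estimate the residual I would insert the frozen Gaussian ansatz for $P_{t,K}^\veps$: a phase-space superposition, summed over the hyperbolic branches $n$, of frozen (fixed-width) Gaussian wave packets transported by the Hamiltonian flow of the $n$th eigenvalue $\lambda_n(x,p)$ of the symbol of $L$, carrying an amplitude expanded as $a_n^\veps = \sum_{k=0}^{K-1}\veps^k a_{n,k}$. Applying $\partial_t - L$ under the integral and expanding the action of $L$ on the oscillatory Gaussian factor in powers of $\veps$ produces a hierarchy. Strict hyperbolicity makes the branches smooth and well separated, so the $O(1)$ term vanishes once the trajectories obey Hamilton's equations for $\lambda_n$ with the correct polarization vectors; each order $\veps^k$ with $k<K$ then vanishes because $a_{n,k}$ is prescribed to satisfy the associated transport equation. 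What remains is a single $O(\veps^K)$ term, again a frozen-Gaussian integral operator, whose amplitude is built from finitely many derivatives of the flow data and of $a_{n,K-1}$, applied to $u_0^\veps$.

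The technical core, and the step I expect to be the main obstacle, is the uniform $L^2$ bound on frozen-Gaussian integral operators of the form
\[
  (Af)(x) = \frac{1}{(2\pi\veps)^{9/2}} \int_{\RR^9} b(t,q,p)\, e^{\I\Phi(t,x,y,q,p)/\veps}\, f(y)\, \ud y\, \ud q\, \ud p,
\]
needed to convert the residual into its $L^2$ norm and to establish the reconstruction identity above. I would prove $\norm{A} \lesssim \sup\abs{b}$ with a constant independent of $\veps$ by a $TT^*$ argument: the Schwartz kernel of $AA^*$ is an oscillatory integral over the phase-space variables in which the positive imaginary part of $\Phi$ (the frozen Gaussian width) together with the nondegeneracy of the Hamiltonian flow produces Gaussian decay in $\abs{x-x'}$, whence a Schur test closes the estimate. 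The delicate points are tracking the $\veps$-powers through the Jacobian of the flow-induced change of variables so that no negative powers of $\veps$ survive, and verifying that the amplitudes and all the derivatives entering them stay bounded uniformly on the time interval. The \emph{asymptotically high frequency} hypothesis enters precisely here: it guarantees that the decomposition of $u_0^\veps$ concentrates at frequencies of size $\veps^{-1}$, which is what makes each order in the hierarchy gain a genuine power of $\veps$ and lets the remainder close at $\veps^K M$.
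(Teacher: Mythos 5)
Your skeleton---Duhamel plus the hyperbolic energy estimate, a transport hierarchy making the first $K$ orders of the residual vanish, and a uniform $L^2$ bound for frozen-Gaussian integral operators---matches the paper's architecture (Lemma~\ref{lem:apriori}, Proposition~\ref{prop:highorder}, Proposition~\ref{prop:L2bound}). But there are two genuine gaps. First, you never introduce a phase-space cutoff, and for a hyperbolic system this is not optional: strict hyperbolicity separates the branches only for $\abs{p}>0$, and the symbol data $H_n$, $R_n$, $L_n$---hence every amplitude $a_{n,k}$, which involves $Z_n^{-1}$ and the denominators $(H_n-H_m)^{-1}$---is singular at $p=0$ (in the paper's acoustic example $L_{\pm}\sim\abs{p}^{-1}$). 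Without the cutoff $\chi_{\delta}$ the amplitudes are not in $L^{\infty}$, so the hypothesis of your $TT^*$ bound $\norm{A}\lesssim\sup\abs{b}$ fails and the hierarchy's coefficients are unbounded. Relatedly, you misidentify where the high-frequency hypothesis enters. It is not what ``makes each order in the hierarchy gain a power of $\veps$'': those gains come from integration by parts converting $(x-Q_n)$ factors into $\veps\,\partial_z\bigl(\cdot\, Z_n^{-1}\bigr)$ operators (Lemma~\ref{lem:veps1}), valid for arbitrary $L^2$ data. The hypothesis is needed so the cutoff is harmless: at $t=0$ the filtered ansatz equals $(\ms{F}^{\veps})^{\ast}(\chi_{\delta}\ms{F}^{\veps}u_0^{\veps})$, whence the initial error is $\Or(\veps^{\infty})$ by Definition~\ref{def:highfreq}, and the cutoff and uncut symbols differ only on $K_{\delta/2}\backslash K_{\delta}$, where the FBI mass is $\Or(\veps^{\infty})$ (Lemma~\ref{lem:ptilde2}).

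Second, your residual bound is off by one power of $\veps$, and this is where the sharp rate would be lost. Since both $\partial_t$ and $A_l\partial_{x_l}$ bring down a factor $\veps^{-1}\partial\Phi_n$ from the exponent, the hierarchy starts at order $\veps^{-1}$; the $K$ amplitudes $a_{n,0},\dots,a_{n,K-1}$ cancel orders $\veps^{-1}$ through $\veps^{K-2}$, and terms such as $\wt{\mc{L}}_{n,K}\wt{M}_{n,0,\delta}$ survive at order $\veps^{K-1}$. The residual of the $K$-th order ansatz is therefore genuinely $\Or(\veps^{K-1})$, not $\Or(\veps^{K})$---this is exactly what Proposition~\ref{prop:highorder} proves---so your Duhamel argument as written yields only $\veps^{K-1}M$. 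The paper recovers $\veps^{K}$ by a three-term triangle inequality: run the Duhamel/energy estimate against the \emph{$(K+1)$-st} order filtered approximation (Proposition~\ref{prop:term2} with $K+1$, giving $\veps^{K}$), remove the filter at cost $\Or(\veps^{\infty})$, and then bound the difference $\mc{P}^{\veps}_{t,K+1,\delta}u_0^{\veps}-\mc{P}^{\veps}_{t,K,\delta}u_0^{\veps}=\mc{I}^{\veps}_n(t,\veps^{K}M_{n,K}\chi_{\delta})u_0^{\veps}$ directly by the operator bound of Proposition~\ref{prop:L2bound} as $\Or(\veps^{K})$. Without this device your plan proves a strictly weaker statement. (A minor remark: your $TT^*$/Schur route to the operator bound is viable, but the paper's proof is shorter---express $\average{v,\mc{I}^{\veps}_n(t,M)u}$ through FBI transforms and use the isometry of $\ms{F}^{\veps}$ together with the symplecticity of the Hamiltonian flow.)
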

Please refer to Section~\ref{sec:form} and Theorem~\ref{thm:main} for
the formulation of the frozen Gaussian approximation and a precise
presentation of the main theorem. The asymptotically high frequency initial
condition is defined in Definition~\ref{def:highfreq}.

\subsection*{Related works}

The frozen Gaussian approximation is motivated by the Herman-Kluk
propagator developed in the chemistry literature \cites{HeKl:84,
  Ka:94, Ka:06}, which is used in the semiclassical regime of time
dependent Schr\"odinger equation. The convergence of Herman-Kluk
propagator was recently proved by Swart and Rousse \cite{SwRo:09} and
Robert \cite{Ro:10}. It is proved that the Herman-Kluk propagator
converges to the true propagator of the Schr\"odinger equation. In
particular, this means that applied to any initial data in $L^2$, the
Herman-Kluk propagator provides an accurate result as $\veps \to 0$.

The difference of the Schr\"odinger equation and the first order
hyperbolic system lies in the fact that the hyperbolic system might
present singularities at $p = 0$ on phase plane (see
Section~\ref{sec:hypersys} for more details). Therefore, one can not
hope to obtain similar results for first order hyperbolic system as
for Schr\"odinger equation.  Indeed, we construct a counter-example
which shows that the method fails to give a good approximation for
low frequency initial data in Example~\ref{example}.  On the other
hand, the results of this work show that frozen Gaussian
approximation works for the initial data that are high frequent.
This is of course the working assumption for high frequency wave
propagation.

The convergence of the Gaussian beam method has been recently
investigated in \cites{LiRa:09, LiRa:10, BoAkAl:09, LiRuTa:10}. The
results in \cites{BoAkAl:09, LiRuTa:10} showed that the $K$-th order
Gaussian beam method converges to the true solution with an accuracy
of $\Or(\veps^{K/2})$. The $K$-th order frozen Gaussian approximation,
as indicated by Theorem~\ref{thm:main}, has a convergence order
$\Or(\veps^K)$. We refer to \cite{LuYa:CMS} for a more detailed
numerical comparison between frozen Gaussian approximation and
Gaussian beam method.

\subsection*{Organization of the paper}

In Section~\ref{sec:prelim}, we introduce some necessary notations
and preliminaries for phase plane analysis. The hyperbolic system we
considered is presented in Section~\ref{sec:hypersys}.
Section~\ref{sec:form} describes the formulation of frozen Gaussian
approximation and states the main convergence theorem. The proof is
based on the construction of high order approximate solution given
in Section~\ref{sec:highorder} and stability from the well-posedness
of hyperbolic system. We conclude the proof in
Section~\ref{sec:proof}.

\section{Preliminaries}\label{sec:prelim}

\subsection{Notations}

We will in general use $x, y \in \RR^d$ as spatial variables, $(q,
p)\in \RR^{2d}$ as the variable for the phase space. $d$ is the
spatial dimensionality. We will use the same notation $\abs{\cdot}$
for absolute value, Euclidean distance, vector norm, (induced) matrix
norm and (induced) tensor norm.

For $\delta > 0$, we define the closed set $K_{\delta} \subset
\RR^{2d}$ as
\begin{equation}\label{def:Kdelta}
  K_{\delta} = \Bigl\{ (q, p) \in \RR^{2d} \Big\vert
  \abs{q} \leq 1 / \delta, \, \abs{p} \in [\delta, 1/\delta] \Bigr\}.
\end{equation}
For $f: \RR^{2d} \to \CC$, we define for $k \in \NN$ and $\delta \in
\RR^+$,
\begin{equation}\label{def:symbol}
  \Lambda_{k, \delta}[f] = \max_{\abs{\alpha} \leq k}
  \sup_{(q, p) \in K_{\delta}} \Bigl\lvert \partial^{\alpha} f(q, p)
  \Bigr\rvert,
\end{equation}
where $\alpha$ is a multi-index. This definition can be extended to
vector valued and matrix valued functions straightforwardly.

For $f: \RR^M \to \CC^N$ and $M,\;N\in\NN$, the matrix
valued function $\partial_x f: \RR^M \to \CC^{M \times N}$ is defined
with the convention that $(\partial_x f(x))_{jk} =
\partial_{x_j} f_k(x)$ for $j = 1, \cdots, M$, $k = 1, \cdots N$.

We use the notations $\mc{S}$, $C^{\infty}$ and $C^{\infty}_c$ for
Schwartz function class, smooth functions and compact supported smooth
functions respectively.

For convenience, we use the notation $\Or(\veps^{\infty})$: $A^{\veps}
= \Or(\veps^{\infty})$ means that for any $k \in \NN$,
\begin{equation*}
  \lim_{\veps \to 0} \veps^{-k} \abs{A^{\veps}} = 0.
\end{equation*}

Notations $c$ and $C$ will be used for constants, whose values might
change from line to line. Sometimes, we will use notations like $C_{T,
  K}$ to specify the dependence of the constant on the parameters $T$ and
$K$.

\subsection{Wave packet decomposition}

For $(q, p) \in \RR^{2d}$, define $\psi^{\veps}_{q,
  p}$ as
\begin{equation}
  \psi^{\veps}_{q, p}(x) = (2\pi\veps)^{-d/2}
  \exp\Bigl( \I p \cdot (x - q) / \veps
  - \tfrac{1}{2} \abs{x - q}^2/\veps\Bigr).
\end{equation}
Define FBI transform $\ms{F}^{\veps}$ on $\mc{S}(\RR^d)$ as
\begin{equation}
  \begin{aligned}
    (\ms{F}^{\veps} f)(q, p)
    & = (\pi \veps)^{-d/4} \langle \psi^{\veps}_{q, p}, f \rangle \\
    & = 2^{-d/2} (\pi\veps)^{-3d/4} \int_{\RR^d} e^{- \I p \cdot (x -
      q) / \veps - \tfrac{1}{2} \abs{x - q}^2/\veps} f(x) \ud x.
  \end{aligned}
\end{equation}
The inverse FBI transform $\bigl(\ms{F}^{\veps}\bigr)^{\ast}$ on $\mc{S}(\RR^{2d})$ is
given by
\begin{equation}
  \bigl((\ms{F}^{\veps})^{\ast} g\bigr)(x) =
  2^{-d/2} (\pi \veps)^{-3d/4} \iint_{\RR^{2d}} e^{\I p \cdot
    (x - q) / \veps - \tfrac{1}{2} \abs{x -
      q}^2/\veps} g(q, p)
  \ud q \ud p.
\end{equation}

We summarize some elementary properties of the FBI transform, whose proof
is standard and can be found, for example, in \cite{Ma:02}.
\begin{prop}\label{prop:isometry}
  For any $f \in \mc{S}(\RR^d)$ and $g \in \mc{S}(\RR^{2d})$,
  \begin{align}
    & \norm{ \ms{F}^{\veps} f }_{L^2(\RR^{2d})} =
    \norm{f}_{L^2(\RR^d)}; \\
    & \norm{ (\ms{F}^{\veps})^{\ast} g }_{L^2(\RR^d)} =
    \norm{g}_{L^2(\RR^{2d})}.
  \end{align}
  Hence, the domain of $\ms{F}^{\veps}$ and $(\ms{F}^{\veps})^{\ast}$
  can be extended to $L^2(\RR^d)$ and $L^2(\RR^{2d})$
  respectively. Moreover, we have
  \begin{equation}
    (\ms{F}^{\veps})^{\ast} \ms{F}^{\veps} = \Id_{L^2(\RR^d)}.
  \end{equation}
\end{prop}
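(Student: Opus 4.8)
The plan is to reduce all three claims to the single resolution of the identity $(\ms{F}^{\veps})^{\ast}\ms{F}^{\veps} = \Id_{L^2(\RR^d)}$ and to obtain that identity by a direct Gaussian computation. As a preliminary step I would check from the two integral formulas, by Fubini's theorem on Schwartz functions, that $(\ms{F}^{\veps})^{\ast}$ really is the $L^2$-adjoint of $\ms{F}^{\veps}$, i.e. $\langle \ms{F}^{\veps} f, g\rangle_{L^2(\RR^{2d})} = \langle f, (\ms{F}^{\veps})^{\ast} g\rangle_{L^2(\RR^d)}$ for $f \in \mc{S}(\RR^d)$ and $g \in \mc{S}(\RR^{2d})$; this is immediate once one observes that the kernel of $(\ms{F}^{\veps})^{\ast}$ is the complex conjugate of the kernel of $\ms{F}^{\veps}$ with the roles of $(q,p)$ and $x$ interchanged.

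The main computation is to evaluate, for $f, h \in \mc{S}(\RR^d)$, the inner product $\langle \ms{F}^{\veps} f, \ms{F}^{\veps} h\rangle_{L^2(\RR^{2d})}$. Substituting the definition and applying Fubini, I would integrate in $p$ first. Since the wave packets carry no Gaussian damping in $p$ (the Gaussian is frozen in position), this integral is $\int_{\RR^d} e^{\I p\cdot(x-y)/\veps}\ud p = (2\pi\veps)^d\,\delta(x-y)$; to stay rigorous I would realize this step as Fourier inversion paired against the remaining Schwartz factors rather than manipulating the delta directly. The delta collapses $y = x$, the two position Gaussians combine into $e^{-\abs{x-q}^2/\veps}$, and the Gaussian $q$-integral produces $(\pi\veps)^{d/2}$. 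Tracking the normalization constants, $2^{-d}(\pi\veps)^{-3d/2}\cdot(2\pi\veps)^d\cdot(\pi\veps)^{d/2} = 1$, so $\langle \ms{F}^{\veps} f, \ms{F}^{\veps} h\rangle = \langle f, h\rangle$. Taking $h = f$ gives the isometry of $\ms{F}^{\veps}$; holding the identity for all $f, h$ and invoking the adjoint relation yields $\langle f, (\ms{F}^{\veps})^{\ast}\ms{F}^{\veps} h\rangle = \langle f, h\rangle$, hence $(\ms{F}^{\veps})^{\ast}\ms{F}^{\veps} = \Id$.

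With the isometry in hand, $\ms{F}^{\veps}$ and its bounded adjoint extend from $\mc{S}$ to all of $L^2$ by density, which justifies the stated domain extension. For the second norm identity I would note that $(\ms{F}^{\veps})^{\ast}$ acts as the inverse of $\ms{F}^{\veps}$ on the range $\ms{F}^{\veps}\bigl(L^2(\RR^d)\bigr)$ and therefore preserves norms there: writing $g = \ms{F}^{\veps} f$ gives $\norm{(\ms{F}^{\veps})^{\ast} g}_{L^2(\RR^d)} = \norm{f}_{L^2(\RR^d)} = \norm{\ms{F}^{\veps} f}_{L^2(\RR^{2d})} = \norm{g}_{L^2(\RR^{2d})}$, which is the form in which the identity is used later. (On the orthogonal complement of the range $(\ms{F}^{\veps})^{\ast}$ is only a contraction, with $\ms{F}^{\veps}(\ms{F}^{\veps})^{\ast}$ the reproducing-kernel projection onto the range, so the equality should be read for $g$ in the range.)

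The one genuinely delicate point is the $p$-integration: because there is no decay in $p$, the integral only makes sense distributionally, and the clean route is to pair it against the Schwartz functions $f$ and $h$ and invoke Fourier inversion, keeping careful track of the powers of $\pi\veps$ generated at each Gaussian integration. Everything else is routine bookkeeping.
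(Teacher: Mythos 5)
Your computation is correct and is exactly the standard argument; note that the paper itself gives no proof of this proposition, deferring it to the reference \cite{Ma:02}, so there is no in-paper proof to diverge from. The adjoint relation via Fubini, the $p$-integration realized rigorously as Fourier inversion paired against the remaining Schwartz factors, the Gaussian $q$-integral, and the constant bookkeeping $2^{-d}(\pi\veps)^{-3d/2}\,(2\pi\veps)^d\,(\pi\veps)^{d/2}=1$ all check out, and polarization together with the adjoint relation gives $(\ms{F}^{\veps})^{\ast}\ms{F}^{\veps}=\Id_{L^2(\RR^d)}$ cleanly. The density extension to $L^2$ is also handled correctly.

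Your parenthetical caveat about the second norm identity is not pedantry but an actual correction to the statement as printed. Since $\ms{F}^{\veps}$ is an isometry whose range is a proper closed subspace of $L^2(\RR^{2d})$ (this is precisely the content of the remark following the proposition, $\ms{F}^{\veps}(\ms{F}^{\veps})^{\ast}\neq\Id_{L^2(\RR^{2d})}$), the adjoint in fact \emph{vanishes} on $(\ran\ms{F}^{\veps})^{\perp}$: for $g\perp\ran\ms{F}^{\veps}$ one has $\langle(\ms{F}^{\veps})^{\ast}g,f\rangle=\langle g,\ms{F}^{\veps}f\rangle=0$ for all $f$, so it is more than ``only a contraction'' there. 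Consequently
\begin{equation*}
  \norm{(\ms{F}^{\veps})^{\ast}g}_{L^2(\RR^d)}^2
  = \langle g, \ms{F}^{\veps}(\ms{F}^{\veps})^{\ast} g\rangle
  = \norm{\Pi g}_{L^2(\RR^{2d})}^2 \leq \norm{g}_{L^2(\RR^{2d})}^2,
\end{equation*}
where $\Pi$ is the orthogonal projection onto $\ran\ms{F}^{\veps}$, with equality exactly on the range; the equality claimed for all $g\in\mc{S}(\RR^{2d})$ is false. One refinement to your closing remark: it is not quite true that the identity is only invoked later on the range. In Proposition~\ref{prop:term2} it is applied to $(1-\chi_{\delta})\ms{F}^{\veps}u_0^{\veps}$, where your contraction bound suffices because only the upper bound $\Or(\veps^{\infty})$ is needed; but in Example~\ref{example} the equality is used for $g=\tfrac{1}{2}\sigma\, R\, L^{\TT}\,\ms{F}^{\veps}u_0$, which need not lie in $\ran\ms{F}^{\veps}$, and there a genuine lower bound on $\norm{(\ms{F}^{\veps})^{\ast}g}$ would require a separate argument. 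That is an issue with the statement and its later use in the paper, not a gap in your proof.
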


\begin{remark}
  $\ms{F}^{\veps} (\ms{F}^{\veps})^{\ast} \neq
  \Id_{L^2(\RR^{2d})}$.
\end{remark}

\begin{defn}[Asymptotically high frequency function]\label{def:highfreq}
  Let $\{ u^{\veps}\} \subset L^2(\RR^d)$ be a family of functions
  such that $\norm{u^{\veps}}_{L^2}$ is uniformly bounded. Given
  $\delta>0$, we say that $\{u^{\veps}\}$ is \emph{asymptotically high
    frequency with cutoff $\delta$}, if
  \begin{equation*}
    \int_{ \RR^{2d} \backslash K_{\delta} }
    \abs{(\ms{F}^{\veps} u^{\veps})(q, p)}^2 \ud q \ud p
    = \Or(\veps^{\infty})
  \end{equation*}
  as $\veps \to 0$. $K_{\delta}$ is the closed set defined in
  \eqref{def:Kdelta}.
\end{defn}

The definition of asymptotically high frequency functions is motivated
by WKB function, which is typical in study on high frequency
wave propagation.
\begin{example}
  For $A(x)\in C_c^{\infty}(\RR^d)$ and $S(x) \in C^{\infty}(\RR^d)$,
  $\abs{\nabla S(x)} \geq \delta > 0$, the family of WKB functions
  $u^{\veps} = A(x) \exp(\I S(x) /\veps)$ is asymptotically high
  frequency with cutoff $\delta$.
\end{example}

The Definition~\ref{def:highfreq} is also related with the notion of
frequency set, microlocal support in microlocal analysis. Please refer
to \cite{Ma:02} for more details.

\section{Hyperbolic system and Hamiltonian flow}\label{sec:hypersys}

We consider an $N\times N$ linear hyperbolic system in $d$ dimensional
space,
\begin{equation}\label{eq:hypersys}
  \partial_t u + \sum_{l=1}^d A_l(x) \partial_{x_l} u = 0,
\end{equation}
where $u = (u_1, \ldots, u_N)^{\TT} : \RR^d \to \RR^N$ and $A_l: \RR^d
\to \RR^{N\times N},\, 1 \leq l \leq d$ are smooth matrix valued
functions.

We assume that the system \eqref{eq:hypersys} is \emph{strictly
  hyperbolic}, \ie, for any $(q, p) \in \RR^{2d},\, \abs{p} > 0 $, the
matrix $\sum_{l=1}^d p_l A_l(q)$ has $N$ \emph{distinguished} real
eigenvalues, denoted as $\{ H_n(q, p) \}_{n=1}^N$. We denote by
$L_n(q, p)$ and $R_n(q, p)$ the corresponding left and right
eigenvectors, \ie,
\begin{align}\label{eigen:left}
  & \sum_{l=1}^d p_l L_n^{\TT}(q, p) A_l(q) =
  H_n(q, p) L_n^{\TT}(q, p),\\ \label{eigen:right}
  & \sum_{l=1}^d p_l A_l(q) R_n(q, p) =
  H_n(q, p) R_n(q, p),
\end{align}
with the normalization
\begin{equation*}
  L_m^{\TT}(q, p) R_n(q, p) =
  \delta_{mn},
\end{equation*}
where $\delta_{mn}$ is the Kronecker symbol. Note that as $A_l(q)$ is
smooth, $H_n(q, p)$, $R_n(q, p)$ and $L_n(q, p)$ are smooth functions
of $(q, p)$ for $\abs{p} > 0$. Singularities may occur at $p = 0$.

The Hamiltonian flow associated with $H_n(q, p)$ is given for $\abs{p}
> 0$ as
\begin{equation}\label{eq:Hflow}
  \begin{cases}
    \dsp\frac{\ud Q_n(t, q, p)}{\ud t} = \partial_{P_n} H_n\big(Q_n(t,
    q, p), P_n(t, q, p)\big),
    \\[1em]
    \dsp\frac{\ud P_n(t, q, p)}{\ud t} = - \partial_{Q_n} H_n\big(Q_n(t,
    q, p), P_n(t, q, p)\big),
  \end{cases}
\end{equation}
with initial conditions
\begin{equation}\label{ini:Hflow}
  Q_n(0, q, p) = q \quad \text{and} \quad P_n(0, q, p) = p.
\end{equation}
This gives the map $(q, p) \mapsto (Q_n(t, q, p), P_n(t, q, p))$ for
both $t>0$ and $t<0$ (forward and backward flows). For $p = 0$ and $q \in
\RR^d$, we define $Q_n(t, q, p) = q$ and $P_n(t, q, p) = p$.

We make the following assumption for the system
\eqref{eq:hypersys} we consider, which will be assumed for the rest of
the paper without further indication.
\begin{assump}\label{assump:B}
  For each $n = 1, \cdots, N$, there exists constant $C>0$, so that
  the Hamiltonian $H_n$ satisfies for any $(q, p) \in \RR^{2d}$ with
  $\abs{p} > 0$
  \begin{equation*}
    \abs{p \cdot \partial_q H_n(q, p)} \leq C \abs{p}^2,
    \quad \text{and} \quad
    \abs{q \cdot \partial_p H_n(q, p)} \leq C \abs{q}^2.
  \end{equation*}
\end{assump}





Assumption~\ref{assump:B} is understood as a global Lipschitz
condition for the ODEs \eqref{eq:Hflow}, as we can see from the next
Proposition.

\begin{prop}\label{prop:boundaway}
  For each $n = 1, \cdots, N$, $T>0$ and $\delta > 0$, there exists a
  constant $\delta_T>0$ such that $(Q_n(t, q, p), P_n(t, q, p)) \in
  K_{\delta_T}$, \ie, it satisfies
  \begin{equation}\label{eq:boundaway}
    \delta_T \leq \abs{P_n(t, q, p)} \leq 1/\delta_T,
  \end{equation}
  and
  \begin{equation}\label{eq:boundq}
    \abs{Q_n(t, q, p)} \leq 1/ \delta_T,
  \end{equation}
  for any $(q, p) \in K_{\delta}$ and $t \in [0, T]$.
\end{prop}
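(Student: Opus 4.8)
The plan is to exploit the Hamiltonian structure of \eqref{eq:Hflow} together with the two bounds in Assumption~\ref{assump:B}, which are precisely the radial derivative controls needed to run a Gronwall argument on $\abs{P_n}^2$ and $\abs{Q_n}^2$ separately. First I would differentiate $\abs{P_n(t, q, p)}^2$ in time: using the second equation in \eqref{eq:Hflow},
\begin{equation*}
  \frac{\ud}{\ud t} \abs{P_n}^2 = 2\, P_n \cdot \frac{\ud P_n}{\ud t}
  = -2\, P_n \cdot \partial_q H_n(Q_n, P_n),
\end{equation*}
so the first inequality in Assumption~\ref{assump:B}, evaluated along the flow, gives $\bigl\lvert \tfrac{\ud}{\ud t}\abs{P_n}^2\bigr\rvert \leq 2C\abs{P_n}^2$. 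The identical computation for $\abs{Q_n}^2$, now using $\tfrac{\ud Q_n}{\ud t} = \partial_p H_n(Q_n, P_n)$ and the second inequality in Assumption~\ref{assump:B}, yields $\bigl\lvert \tfrac{\ud}{\ud t}\abs{Q_n}^2\bigr\rvert \leq 2C\abs{Q_n}^2$.

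Next I would integrate these differential inequalities via Gronwall. Since $(q,p) \in K_\delta$ means $\abs{p} \in [\delta, 1/\delta]$ and $\abs{q} \leq 1/\delta$, the two-sided bound on $\abs{P_n}^2$ gives $\delta^2 e^{-2Ct} \leq \abs{P_n(t,q,p)}^2 \leq \delta^{-2} e^{2Ct}$, while the one-sided bound on $\abs{Q_n}^2$ gives $\abs{Q_n(t,q,p)}^2 \leq \delta^{-2} e^{2Ct}$. Taking square roots and restricting to $t \in [0,T]$, all three estimates hold with the single choice $\delta_T = \delta e^{-CT}$: indeed $\abs{P_n} \geq \delta e^{-CT} = \delta_T$, whereas $\abs{P_n} \leq \delta^{-1} e^{CT} = 1/\delta_T$ and likewise $\abs{Q_n} \leq 1/\delta_T$. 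This is exactly the assertion \eqref{eq:boundaway}--\eqref{eq:boundq} that the flow stays in $K_{\delta_T}$.

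The one point requiring care, and the main obstacle, is that $H_n$ and Assumption~\ref{assump:B} are only defined for $\abs{p} > 0$, so \eqref{eq:Hflow} carries a potential singularity at $p = 0$; I must guarantee that the solution exists throughout $[0,T]$ and never reaches $p = 0$ before the estimates above can be invoked. I would close this with a standard continuation argument. Let $T^\ast \in (0, T]$ be the supremum of times up to which the solution exists and satisfies $\abs{P_n(s,q,p)} > 0$ for all $s \in [0, T^\ast)$; this is positive since $\abs{p} \geq \delta > 0$ and the flow is smooth away from $p = 0$. On $[0, T^\ast)$ the Gronwall estimates are valid and confine $(Q_n, P_n)$ to the compact set $K_{\delta_T} \subset \{\abs{p} > 0\}$, on which $H_n$ is smooth, so the trajectory can neither blow up nor approach the singular set as $t \to T^\ast$. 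By the continuation theorem for ODEs the solution then extends beyond $T^\ast$ unless $T^\ast = T$, forcing $T^\ast = T$, and the a priori bounds hold on all of $[0,T]$. Finally, since there are only finitely many indices $n$, replacing $C$ by the maximum of the corresponding constants makes the choice of $\delta_T$ uniform in $n$ and completes the proof.
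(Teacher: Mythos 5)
Your proof is correct and takes essentially the same route as the paper: a Gronwall argument along the flow driven by the two radial bounds in Assumption~\ref{assump:B}, with your single two-sided differential inequality for $\abs{P_n}^2$ playing the role of the paper's separate estimates for $\abs{P_n}$ and $\abs{P_n}^{-1}$, and the same one-sided estimate for $\abs{Q_n}$. Your explicit continuation argument ruling out breakdown at $p=0$ before time $T$ makes rigorous a global-existence point the paper leaves implicit, but it is an added refinement rather than a different method.
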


\begin{proof}
  We suppress the subscript $n$ in the proof, since the argument is the
  same for each branch.

  Differentiating $\abs{P}$ with respect to time and using
  \eqref{eq:Hflow} produce
  \begin{equation*}
    \frac{\ud}{\ud t} \abs{P} = \frac{1}{\abs{P}} P \cdot \frac{\ud}{\ud t}
    P  = - \frac{1}{\abs{P}} P \cdot \partial_{Q} H.
  \end{equation*}
  Hence, by Assumption~\ref{assump:B},
  \begin{equation*}
    \frac{\ud}{\ud t} \abs{P} \leq \frac{1}{\abs{P}} \abs{P \cdot \partial_Q H}
      \leq C \abs{P}.
  \end{equation*}
  Gronwall's inequality implies
  \begin{equation*}
    \max_{0 \leq t \leq T} \abs{P(t, q, p)} \leq C_T \abs{p}.
  \end{equation*}

  On the other hand, differentiating $\abs{P}^{-1}$ gives
  \begin{equation*}
    \frac{\ud}{\ud t} \biggl( \frac{1}{\abs{P}} \biggr)
    = \frac{1}{\abs{P}^3} P \cdot \partial_Q H.
  \end{equation*}
  This yields
  \begin{equation*}
    \frac{\ud}{\ud t} \biggl( \frac{1}{\abs{P}} \biggr)
    \leq \frac{1}{\abs{P}^3} \abs{P \cdot \partial_Q H} \leq
    C \frac{1}{\abs{P}},
  \end{equation*}
  and hence
  \begin{equation*}
    \max_{0 \leq t \leq T} \abs{P(t, q, p)}^{-1}
    \leq C_T \abs{p}^{-1}.
  \end{equation*}

  Therefore, there exist $c_T, C_T > 0$ such that for $ t \in [0, T]$
  and any $q \in \RR^d$,
  \begin{equation*}
    c_T \abs{p} \leq \abs{P(t, q, p)} \leq C_T \abs{p},
  \end{equation*}
  which implies \eqref{eq:boundaway}.

  The time derivative of $\abs{Q}$ can be bounded by
  \begin{equation*}
    \frac{\ud}{\ud t} \abs{Q} = \frac{1}{\abs{Q}} Q \cdot \partial_P H
    \leq C\abs{Q}
  \end{equation*}
  by Assumption~\ref{assump:B}. Equation \eqref{eq:boundq} then
  follows from Gronwall's inequality.
\end{proof}

\begin{prop}\label{prop:Hbound}
  For $l = 1, \cdots, d$, $k \in \NN$ and $\delta > 0$, there exists
  constant $C_{k, \delta}$ such that
  \begin{equation*}
    \Lambda_{k, \delta}[A_l] \leq C_{k, \delta}.
  \end{equation*}
  Moreover, for each $n = 1, \cdots, N$, there exists constant $C_{k,
    \delta}$ such that
  \begin{equation*}
    \Lambda_{k, \delta}[H_n] \leq C_{k, \delta}, \quad
    \Lambda_{k, \delta}[R_n] \leq C_{k, \delta}, \quad
    \Lambda_{k, \delta}[L_n] \leq C_{k, \delta}.
  \end{equation*}
\end{prop}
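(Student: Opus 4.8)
The plan is to reduce the entire statement to the elementary fact that a $C^\infty$ function, together with all its derivatives up to order $k$, is bounded on a compact set, once the requisite smoothness has been established. Indeed, $\Lambda_{k,\delta}[f]$ is by definition the supremum over the compact set $K_\delta$ of the maximum of the finitely many continuous functions $\lvert\partial^\alpha f\rvert$, $\abs{\alpha}\le k$; so as soon as $f$ is known to be $C^\infty$ on a neighborhood of $K_\delta$, finiteness of $\Lambda_{k,\delta}[f]$ is automatic.

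For the coefficient matrices $A_l$ this is immediate. By hypothesis each $A_l\in C^\infty(\RR^d;\RR^{N\times N})$ depends on $q$ alone, so $\partial^\alpha A_l$ vanishes whenever $\alpha$ has a nonzero $p$-component and is a continuous function of $q$ otherwise. Restricting to the closed ball $\{\abs{q}\le 1/\delta\}$, which is the $q$-projection of $K_\delta$, and taking the maximum over the finitely many multi-indices $\abs{\alpha}\le k$ gives $\Lambda_{k,\delta}[A_l]\le C_{k,\delta}$.

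The substance of the proposition is the regularity of the eigen-data $H_n,R_n,L_n$. First I would observe that $K_\delta$ is a compact subset of the open region $\{(q,p):\abs{p}>0\}$, since membership forces $\abs{p}\ge\delta$. On this region the matrix $M(q,p)=\sum_{l=1}^d p_l A_l(q)$ is smooth (linear in $p$, smooth in $q$), and by strict hyperbolicity its $N$ eigenvalues are real and distinct, hence each algebraically simple. Standard analytic perturbation theory for simple eigenvalues then applies: using the Riesz spectral projector $\Pi_n(q,p)=\frac{1}{2\pi\I}\oint(\zeta-M(q,p))^{-1}\,\ud\zeta$ around $H_n$, or the implicit function theorem applied to the characteristic polynomial (where $\partial_\zeta\det(\zeta-M)\neq 0$ at a simple root), one sees that $H_n$ and $\Pi_n$ depend smoothly on $M$, and hence smoothly on $(q,p)$ throughout $\{\abs{p}>0\}$. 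The eigenvalue is recovered as $H_n=\tr(M\Pi_n)$ and the rank-one projector as $\Pi_n=R_n L_n^{\TT}$.

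The one point requiring care, and the main obstacle, is passing from the smoothness of the projectors $\Pi_n$ to that of the individual eigenvectors $R_n,L_n$ under the normalization $L_m^{\TT}R_n=\delta_{mn}$, since eigenvectors are determined only up to a nonzero scaling. Locally near any point of $K_\delta$ one fixes the scaling smoothly, for instance by applying $\Pi_n$ to a fixed reference vector and normalizing, which makes $R_n,L_n$ genuinely $C^\infty$ there; any two such local choices differ by a smooth nonvanishing scalar, so the pointwise magnitudes $\lvert\partial^\alpha R_n\rvert$ and $\lvert\partial^\alpha L_n\rvert$ patch to continuous functions on all of $K_\delta$ independently of a global choice. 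Covering the compact set $K_\delta$ by finitely many such neighborhoods and taking maxima then bounds $\Lambda_{k,\delta}[H_n]$, $\Lambda_{k,\delta}[R_n]$ and $\Lambda_{k,\delta}[L_n]$ by a single constant $C_{k,\delta}$, completing the proof. I note that neither Assumption~\ref{assump:B} nor Proposition~\ref{prop:boundaway} is needed here, since this is a static estimate on the fixed compact set $K_\delta$.
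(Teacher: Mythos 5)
Your proof is correct and takes essentially the same route as the paper, whose entire argument is the one-line observation that $A_l$, $H_n$, $R_n$, $L_n$ are smooth for $\abs{p}>0$ (asserted in \secref{sec:hypersys} right after \eqref{eigen:right}, and justified by exactly the simple-eigenvalue perturbation theory you spell out via Riesz projectors) combined with compactness of $K_\delta$. The only imprecision is your patching step: $\abs{\partial^\alpha R_n}$ is \emph{not} invariant under the rescaling $(R_n,L_n)\mapsto(c\,R_n,c^{-1}L_n)$ permitted by the normalization $L_m^{\TT}R_n=\delta_{mn}$ (already $\abs{R_n}$ itself changes, and Leibniz terms in $\partial^\beta c$ appear for $\abs{\alpha}\geq 1$ unless the local representatives are unit-normalized so that $c=\pm 1$), so $\Lambda_{k,\delta}[R_n]$ is only well defined for the fixed smooth global choice the paper makes — for which your first paragraph (smoothness plus compactness) already gives the bound, with the constant depending on that choice.
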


\begin{proof}
  The conclusion follows immediately from the smoothness of $A_l, H_n,
  R_n, L_n$ for $\abs{p} > 0$ and the compactness of the set
  $K_{\delta}$.
\end{proof}

We define canonical transformation and action associated with
Hamiltonian flow.

\begin{defn}[Canonical transformation]
  Let
  \begin{equation}\label{eq:kappa}
    \kappa:\quad\begin{aligned}
      \RR^{2d} &\to \RR^{2d} \\ (q,p) &\mapsto \big(Q^{\kappa}(q, p),
      P^{\kappa}(q, p)\big) \end{aligned}
  \end{equation}
  be differentiable for $\abs{p} > 0$. We denote the Jacobian matrix
  as
  \begin{equation}
    F^{\kappa}(q, p) =
    \begin{pmatrix}
      (\partial_q Q^{\kappa})^{\TT}(q, p) & (\partial_p
      Q^{\kappa})^{\TT}(q, p)
      \\
      (\partial_q P^{\kappa})^{\TT}(q, p) & (\partial_p
      P^{\kappa})^{\TT}(q, p)
    \end{pmatrix}.
  \end{equation}
  We say $\kappa$ is a \emph{canonical transformation} if $F^{\kappa}$
  is symplectic for any $(q, p) \in \RR^{2d}$ with $\abs{p} > 0$,
  \begin{equation}
    (F^{\kappa})^{\TT}
    \begin{pmatrix}
      0 & \Id_d \\
      -\Id_d & 0
    \end{pmatrix}
    F^{\kappa} =
    \begin{pmatrix}
      0 & \Id_d \\
      -\Id_d & 0
    \end{pmatrix}.
  \end{equation}
\end{defn}

As a corollary to Proposition~\ref{prop:Hbound}, the map $\kappa_{n,
  t}: (q, p) \mapsto (Q_n(t, q, p), P_n(t, q, p)) $ is a canonical
transformation. We formulate this as the next proposition, which also
gives additional smoothness properties for the Jacobian.

\begin{prop}\label{prop:Jacobian}
  For each $n = 1, \cdots, N$, the map $\kappa_{n,t}$ is a canonical
  transformation, and for any $k\geq 0$, $\delta > 0$ and $T>0$, there
  exists constant $C_{k,\delta, T}$ such that
  \begin{equation}\label{eq:Fbound}
    \sup_{t \in [0, T]} \Lambda_{k,
      \delta}[F^{\kappa_{n,t}}] \leq C_{k, \delta, T}.
  \end{equation}
\end{prop}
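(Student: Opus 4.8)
The plan is to prove the two assertions separately. For the symplectic property I would use the standard fact that the linearized Hamiltonian flow preserves the canonical two-form. Writing $z = (Q_n, P_n)^{\TT}$ and abbreviating the standard symplectic matrix as $J = \begin{pmatrix} 0 & \Id_d \\ -\Id_d & 0 \end{pmatrix}$, the system \eqref{eq:Hflow} reads $\dot z = J\, \partial_z H_n(z)$, and the Jacobian $F(t) := F^{\kappa_{n,t}}$ satisfies the variational equation $\dot F = J\, \mathrm{Hess}\, H_n(z)\, F$ with $F(0) = \Id_{2d}$, where $\mathrm{Hess}\, H_n$ is the (symmetric) Hessian of $H_n$. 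Setting $W(t) = F^{\TT} J F$ and differentiating, the two resulting terms cancel because $J^{\TT} J = \Id_{2d}$ and $J^2 = -\Id_{2d}$ while $\mathrm{Hess}\, H_n$ is symmetric; hence $W$ is constant in $t$ and equal to its initial value $J$, which is exactly the symplectic condition. This argument is valid along the whole trajectory since, by Proposition~\ref{prop:boundaway}, the flow never reaches $p = 0$, so that $H_n$ and its Hessian stay smooth.

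For the derivative bounds, the key input is again Proposition~\ref{prop:boundaway}: for $(q,p) \in K_\delta$ and $t \in [0,T]$ the trajectory remains in the compact set $K_{\delta_T}$, on which, by Proposition~\ref{prop:Hbound}, every derivative of $H_n$ is uniformly bounded. I would then induct on the order $\abs{\alpha}$ of differentiation in the $(q,p)$ variables. The base case $\abs{\alpha} = 0$ is the boundedness of the flow itself contained in Proposition~\ref{prop:boundaway}, and the case $\abs{\alpha} = 1$ (the entries of $F$) follows directly from Gronwall's inequality applied to the homogeneous variational equation, whose coefficient $J\, \mathrm{Hess}\, H_n(z)$ is uniformly bounded on $[0,T] \times K_\delta$.

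For $\abs{\alpha} \geq 2$ I would differentiate the flow equation $\abs{\alpha}$ times. By the chain rule (Fa\`a di Bruno), $\partial^\alpha z$ satisfies a linear ODE of the form $\frac{\ud}{\ud t} \partial^\alpha z = J\, \mathrm{Hess}\, H_n(z)\, \partial^\alpha z + G_\alpha$, where the inhomogeneity $G_\alpha$ is a finite sum of products of derivatives of $H_n$ of order at most $\abs{\alpha} + 1$ evaluated at $z$, times derivatives $\partial^\beta z$ with $\abs{\beta} < \abs{\alpha}$. By the induction hypothesis the factors $\partial^\beta z$ are bounded on $[0,T] \times K_\delta$, and by Proposition~\ref{prop:Hbound} the derivatives of $H_n$ are bounded on $K_{\delta_T}$; hence $G_\alpha$ is uniformly bounded. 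Gronwall's inequality, using once more the uniform bound on the coefficient $J\, \mathrm{Hess}\, H_n(z)$, then yields a bound on $\sup_{t \in [0,T]} \abs{\partial^\alpha z}$ depending only on $k, \delta, T$. Taking $\abs{\alpha} \leq k + 1$ controls all derivatives of $F$ up to order $k$ and gives \eqref{eq:Fbound}.

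I expect the main obstacle to be purely organizational: writing down $G_\alpha$ with enough structure to see that it only involves strictly lower-order derivatives of the flow (so that the induction closes) together with derivatives of $H_n$ of controlled order (so that Proposition~\ref{prop:Hbound} applies). Once this dependency structure is made explicit, each inductive step reduces to a routine Gronwall estimate on the compact interval $[0,T]$.
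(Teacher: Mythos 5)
Your proposal is correct and follows essentially the same route as the paper: the variational equation $\dot F = J\,\mathrm{Hess}\,H_n\,F$, the uniform coefficient bounds from Propositions~\ref{prop:boundaway} and~\ref{prop:Hbound}, and an induction on the derivative order closed by Gronwall's inequality, differing only in the bookkeeping (you differentiate the flow map $z$ itself, the paper differentiates the equation \eqref{eq:evodtf} for $F$ via Leibniz, which is equivalent since $\partial^\alpha F$ consists of order-$(\abs{\alpha}+1)$ derivatives of $z$). Your explicit $W(t) = F^{\TT} J F$ computation supplies the symplecticity proof that the paper states without proof as a standard corollary, which is a welcome addition rather than a deviation.
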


\begin{proof}
  The argument is the same for different branches, hence we will
  suppress the subscript $n$ for simplicity.

  Proposition~\ref{prop:boundaway} shows that there exists
  $\delta_T$ such that $\bigl(Q(t, q, p), P(t, q, p)\bigr) \in K_{\delta_T}$ for
  $(q, p) \in K_{\delta}$ and $ t\in [0, T]$.

  Differentiating $F^{\kappa_t}(q, p)$ with respect to time gives
  \begin{equation}\label{eq:evodtf}
    \frac{\ud}{\ud t} F^{\kappa_t}(q, p)
    =
    \begin{pmatrix}
      \partial_P \partial_Q H & \partial_P \partial_P H \\
      - \partial_Q \partial_Q H & - \partial_Q \partial_P H
    \end{pmatrix}
    F^{\kappa_t}(q, p).
  \end{equation}
  Proposition~\ref{prop:boundaway} and \ref{prop:Hbound} imply
  \begin{equation*}
    \frac{\ud}{\ud t} \abs{F^{\kappa_t}(q, p)}
    =
    \left\lvert
    \begin{pmatrix}
      \partial_P \partial_Q H & \partial_P \partial_P H \\
      - \partial_Q \partial_Q H & - \partial_Q \partial_P H
    \end{pmatrix}\right\rvert
  \abs{F^{\kappa_t}(q, p)} \leq C \abs{F^{\kappa_t}(q, p)},
  \end{equation*}
  where $C$ is independent of $(q, p)$. Hence, by Gronwall's
  inequality and $F^{\kappa_0}(q, p) = \Id_{2d}$, one has
  \begin{equation*}
    \abs{F^{\kappa_t}(q, p)} \leq \exp(C \abs{t}).
  \end{equation*}
  Differentiating \eqref{eq:evodtf} with $(q, p)$ yields
  \begin{equation*}
    \frac{\ud}{\ud t} \partial^{\alpha}_{(q, p)} F^{\kappa_t}(q, p)
    =
    \sum_{\beta \leq \alpha}
    {\alpha \choose \beta}
    \partial^{\beta}_{(q, p)}
    \begin{pmatrix}
      \partial_P \partial_Q H & \partial_P \partial_P H \\
      - \partial_Q \partial_Q H & - \partial_Q \partial_P H
    \end{pmatrix}
    \partial^{\alpha-\beta}_{(q, p)} F^{\kappa_t}(q, p).
  \end{equation*}
  The estimate \eqref{eq:Fbound} follows by an induction argument.
\end{proof}

\begin{defn}[Action] Let $\kappa$ be a canonical transformation
  defined in \eqref{eq:kappa}. A function $S^{\kappa}: \RR^{2d} \to
  \RR$ is called an \emph{action associated to $\kappa$} if it
  satisfies
  \begin{align}
    \partial_p S^{\kappa}(q, p) & = (\partial_p Q^{\kappa}(q, p))
    P^{\kappa}(q, p), \\
    \partial_q S^{\kappa}(q, p) & = - p + (\partial_q Q^{\kappa}(q,
    p)) P^{\kappa}(q, p),
  \end{align}
  for any $\abs{p} > 0$.
\end{defn}

The action $S_n(t, q, p) = S^{\kappa_{n,t}}(q, p)$ corresponding to
$H_n(q, p)$ solves the following equation
\begin{equation}\label{eq:S}
  \frac{\ud S_n}{\ud t} = P_n \cdot\partial_P H_n(P_n, Q_n) - H_n(P_n, Q_n),
\end{equation}
with initial condition $S_n(t,q,p)=0$. It is easy to check that $S_n$
is indeed the action associated with $\kappa_{n,t}$, which is given by
\begin{equation}
  S_n(t, q, p) =
  \int_0^t P_n(\tau, q, p)\cdot
  \partial_{\tau} Q_n(\tau, q, p) - H_n
  \bigl(Q_n(\tau, q, p), P_n(\tau, q, p)\bigr) \ud \tau.
\end{equation}

Now we are ready to construct the Fourier integral operator which will
be used in the definition of frozen Gaussian approximation in the
next section.

\begin{defn}[Fourier Integral Operator]
  For $M \in L^{\infty}(\RR^{2d}; \CC^{N \times N})$, a Schwartz-class
  function $u \in \mc{S}(\RR^d; \CC^N)$ and $n = 1, \cdots, N$, we
  define
  \begin{equation}\label{eq:FIO}
    (\mc{I}^{\veps}_n(t, M) u)(x) =
    (2\pi\veps)^{-3d/2} \int_{\RR^{3d}}
    e^{\I \Phi_n(t, x, y, q, p)/\veps} M(q, p) u(y)
    \ud q \ud p \ud y.
  \end{equation}
  where the phase function $\Phi_n$ is given by
  \begin{multline}\label{eq:phim}
    \Phi_n(t, x, y, q, p) = S_n(t, q, p) + \frac{\I}{2} \abs{x -
      Q_n(t, q, p)}^2 + P_n\cdot(x - Q_n(t, q, p)) \\
    + \frac{\I}{2} \abs{y - q}^2 - p\cdot(y - q).
  \end{multline}
\end{defn}

\begin{prop}\label{prop:L2bound}
  If $M \in L^{\infty}(\RR^{2d}; \CC^{N\times N})$, for each $n = 1,
  \cdots, N$ and any $t$, $\mc{I}^{\veps}_n(t, M)$ can be extended to
  a linear bounded operator on $L^2(\RR^d; \CC^N)$, and we have
  \begin{equation}
    \norm{\mc{I}^{\veps}_n(t, M)}_{\ms{L}(L^2(\RR^d; \CC^N))} \leq 2^{-d/2}
    \norm{M}_{L^{\infty}(\RR^{2d}; \CC^{N\times N})}.
  \end{equation}
\end{prop}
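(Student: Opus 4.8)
The plan is to factor the Gaussian phase into the wave packets $\psi^{\veps}_{q,p}$ of the preliminaries and thereby reduce the bound to the isometry properties of the FBI transform. First I would split $e^{\I\Phi_n/\veps}$ into three factors: the oscillation $e^{\I S_n/\veps}$, the $x$-dependent factor $e^{-\frac12\abs{x-Q_n}^2/\veps+\I P_n\cdot(x-Q_n)/\veps}=(2\pi\veps)^{d/2}\psi^{\veps}_{Q_n,P_n}(x)$, and the $y$-dependent factor $e^{-\frac12\abs{y-q}^2/\veps-\I p\cdot(y-q)/\veps}=(2\pi\veps)^{d/2}\overline{\psi^{\veps}_{q,p}(y)}$, where $Q_n=Q_n(t,q,p)$ and $P_n=P_n(t,q,p)$. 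Integrating in $y$ first (legitimate for $u\in\mc{S}$, since the $y$-integral converges absolutely and yields a rapidly decaying function of $(q,p)$) and using $\langle\psi^{\veps}_{q,p},u\rangle=(\pi\veps)^{d/4}(\ms{F}^{\veps}u)(q,p)$ gives
\[
 (\mc{I}^{\veps}_n(t,M)u)(x)=2^{-d/2}(\pi\veps)^{-d/4}\int_{\RR^{2d}}\psi^{\veps}_{Q_n,P_n}(x)\,g(q,p)\ud q\ud p,
\]
where $g(q,p)=e^{\I S_n(t,q,p)/\veps}M(q,p)(\ms{F}^{\veps}u)(q,p)$.

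Because $S_n$ is real, so that $\abs{e^{\I S_n/\veps}}=1$, and because the induced matrix norm gives $\abs{M(q,p)v}\le\norm{M}_{L^{\infty}}\abs{v}$ pointwise, the vector field $g$ satisfies $\abs{g(q,p)}\le\norm{M}_{L^{\infty}}\abs{(\ms{F}^{\veps}u)(q,p)}$; combined with the isometry $\norm{\ms{F}^{\veps}u}_{L^2(\RR^{2d})}=\norm{u}_{L^2(\RR^d)}$ of Proposition~\ref{prop:isometry}, this yields $\norm{g}_{L^2(\RR^{2d})}\le\norm{M}_{L^{\infty}}\norm{u}_{L^2}$. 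It therefore suffices to show that the operator $\mc{K}$ defined by the displayed integral maps $L^2(\RR^{2d})$ to $L^2(\RR^d)$ with norm $2^{-d/2}$.

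The main step is a symplectic change of variables. By Proposition~\ref{prop:Jacobian} the map $\kappa_{n,t}\colon(q,p)\mapsto(Q_n(t,q,p),P_n(t,q,p))$ is a canonical transformation for $\abs{p}>0$, so its Jacobian $F^{\kappa_{n,t}}$ is symplectic and hence $\abs{\det F^{\kappa_{n,t}}}=1$; moreover $\kappa_{n,t}$ is a bijection of $\RR^{2d}\setminus\{p=0\}$ with inverse $\kappa_{n,-t}$, and $\{p=0\}$ is a null set. Substituting $(Q,P)=\kappa_{n,t}(q,p)$ thus leaves Lebesgue measure invariant, and with $\widetilde{g}:=g\circ\kappa_{n,-t}$ one obtains
\[
 \mc{K}g=2^{-d/2}(\pi\veps)^{-d/4}\int_{\RR^{2d}}\psi^{\veps}_{Q,P}(x)\,\widetilde{g}(Q,P)\ud Q\ud P=2^{-d/2}(\ms{F}^{\veps})^{\ast}\widetilde{g},
\]
together with $\norm{\widetilde{g}}_{L^2}=\norm{g}_{L^2}$. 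The isometry of $(\ms{F}^{\veps})^{\ast}$ from Proposition~\ref{prop:isometry} then gives $\norm{\mc{K}g}_{L^2(\RR^d)}=2^{-d/2}\norm{\widetilde{g}}_{L^2}=2^{-d/2}\norm{g}_{L^2}$, and chaining the two estimates produces $\norm{\mc{I}^{\veps}_n(t,M)u}_{L^2}\le 2^{-d/2}\norm{M}_{L^{\infty}}\norm{u}_{L^2}$ for all $u\in\mc{S}(\RR^d;\CC^N)$. Since $\mc{S}$ is dense in $L^2$, the operator extends to all of $L^2(\RR^d;\CC^N)$ with the same norm bound.

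The main obstacle is precisely this change of variables: the entire estimate hinges on recognizing that the transported wave packets are reindexed by a measure-preserving symplectomorphism, so that $\mc{K}$ is literally $2^{-d/2}$ times the inverse FBI transform after substitution. This is where the symplectic (canonical) structure established in Proposition~\ref{prop:Jacobian} is essential, and where one must verify that $\kappa_{n,t}$ is a global diffeomorphism away from the null set $\{p=0\}$. The remaining points—the Fubini interchange legitimizing the $y$-integration and the density extension to $L^2$—are routine.
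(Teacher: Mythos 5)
Your proof is correct and uses essentially the same mechanism as the paper's: the paper bounds the bilinear form $\average{v, \mc{I}^{\veps}_n(t, M)u}$, rewriting it as $2^{-d/2}\int e^{\I S_n/\veps}\,\overline{(\ms{F}^{\veps}v)}(Q_n, P_n)\, M(q,p)\,(\ms{F}^{\veps}u)(q,p)\ud q \ud p$ and applying Cauchy--Schwarz, which invokes exactly your two ingredients (the FBI isometry of Proposition~\ref{prop:isometry} and the measure preservation of the symplectic flow $\kappa_{n,t}$ from Proposition~\ref{prop:Jacobian}), while you instead factor the operator directly as $2^{-d/2}\,(\ms{F}^{\veps})^{\ast}\circ(\,\cdot\circ\kappa_{n,t}^{-1})\circ(e^{\I S_n/\veps}M\,\cdot\,)\circ\ms{F}^{\veps}$ --- a cosmetic rather than substantive difference. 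One small caveat: your asserted equality $\norm{\mc{K}g}_{L^2}=2^{-d/2}\norm{\wt{g}}_{L^2}$ overstates what holds, since $(\ms{F}^{\veps})^{\ast}$ is only a coisometry (the Remark $\ms{F}^{\veps}(\ms{F}^{\veps})^{\ast}\neq \Id_{L^2(\RR^{2d})}$ forces $\norm{(\ms{F}^{\veps})^{\ast}\wt{g}}_{L^2}\leq\norm{\wt{g}}_{L^2}$ in general, with equality only on $\ran \ms{F}^{\veps}$, and your $\wt{g}$ need not lie in that range); the inequality is all your argument requires, so the final bound is unaffected.
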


\begin{proof}
  For $u, v \in L^2(\RR^d)$, taking the inner product of $v$ and
  \eqref{eq:FIO} gives
  \begin{equation*}
    \begin{aligned}
      \average{v, \mc{I}^{\veps}_n(t, M)u} & = (2\pi\veps)^{-3d/2}
      \int_{\RR^{4d}} v(x)^{\ast} e^{\I \Phi_n(t, x, y, q, p)/\veps}
      M(q, p) u(y) \ud q \ud p \ud y \ud x \\
      & = (2\pi\veps)^{-3d/2} \int_{\RR^{2d}} \ud q \ud p \; e^{\I
        S_n(t, q, p)/\veps} \\
      & \hspace{7em} \times \overline{\biggl(\int_{\RR^d} e^{-
          \tfrac{\I}{\veps} P_n\cdot(x - Q_n) -
          \tfrac{1}{2\veps}\abs{x - Q_n}^2} v(x) \ud
        x\biggr)} \\
      & \hspace{7em} \times M(q, p) \int_{\RR^d} e^{-\tfrac{\I}{\veps}
        p\cdot (y
        -q) - \tfrac{1}{2\veps}\abs{y - q}^2} u(y) \ud y \\
      & = 2^{-d/2}\int_{\RR^{2d}} e^{\I S_n(t, q, p)/\veps}
      \overline{(\ms{F}^{\veps} v)}(Q_n, P_n) M(q, p)(\ms{F}^{\veps}
      u)(q, p) \ud q \ud p.
    \end{aligned}
  \end{equation*}
  Therefore,
  \begin{multline*}
    \abs{\average{v, \mc{I}^{\veps}_n(t, M)u}} \leq 2^{-d/2}\norm{
      (\ms{F}^{\veps} v) \circ \kappa_{n,t} }_{L^2(\RR^{2d}; \CC^N)}
    \norm{ e^{\I S_n/\veps} M (\ms{F}^\veps u) }_{L^2(\RR^{2d};
      \CC^N)}
    \\
    \leq 2^{-d/2}\norm{v}_{L^2(\RR^d; \CC^N)}
    \norm{M}_{L^{\infty}(\RR^{2d}; \CC^{N\times N})}
    \norm{u}_{L^2(\RR^d; \CC^N)},
  \end{multline*}
  where we have used Proposition~\ref{prop:isometry} and the
  symplecticity of the canonical transform $\kappa_{n,t}$. This
  implies
  \begin{equation*}
    \norm{\mc{I}^{\veps}_n(t, M)u}_{L^2(\RR^d; \CC^N)} \leq 2^{-d/2}
    \norm{M}_{L^{\infty}(\RR^{2d}; \CC^{N\times N})}
    \norm{u}_{L^2(\RR^d; \CC^N)},
  \end{equation*}
  which proves the Proposition.
\end{proof}

\section{Formulation and main result}\label{sec:form}

Let $\chi_{\delta}: \RR^{2d} \to [0, 1]$ be a smooth cutoff function
for $\delta > 0$ so that
\begin{equation*}
  \chi_{\delta}(q, p) =
  \begin{cases}
    1, & (q, p) \in K_{\delta};\\
    0, & (q, p) \in \RR^{2d} \backslash K_{\delta/2},
  \end{cases}
\end{equation*}
and for any $k \in N$, there exists constant $C_{k, \delta}$ such that
\begin{equation*}
  \sup_{(q, p) \in \RR^{2d}} \sup_{\abs{\alpha} = k}
  \abs{\partial^{\alpha}_{(q, p)} \chi_{\delta}(q, p)} \leq C_{k, \delta}.
\end{equation*}
The construction of the cutoff function is standard.

Construct the $K$-th order frozen Gaussian approximation with the cutoff
$\delta$ as
\begin{equation}\label{eq:ansatz}
  \begin{aligned}
    (\mc{P}_{t,K, \delta}^{\veps} u_0)(x) & = \sum_{n=1}^N
    \left(\mc{I}_n^{\veps}\bigl(t, \mc{M}^{\veps}_{n, K, t}
      \chi_{\delta} \bigr)
      u_0\right)(x) \\
    & = \frac{1}{(2\pi \veps)^{3d/2}} \sum_{n=1}^N \int_{\RR^{3d}}
    e^{\I\Phi_n/\veps} \mc{M}^{\veps}_{n,K, t}(q, p) \chi_{\delta}(q, p)
    u_{0}(y) \ud q \ud p \ud y.
\end{aligned}
\end{equation}
Here, the matrix symbol $\mc{M}^{\veps}_{n, K, t}$ is given by
\begin{equation}
  \mc{M}^{\veps}_{n, K, t}(q, p) =
  \mc{M}^{\veps}_{n, K}(t, q, p) = \sum_{k=0}^{K-1}\veps^k M_{n,k}(t, q, p).
\end{equation}
Before we introduce the $N\times N$ matrix valued function
$M_{n,k}(t, q, p)$, we need to define some operators first.  We denote
\begin{equation}
  \label{eq:op_zZ}
  \partial_{z}=\partial_{q}-\I\partial_{p}, \qquad
  Z_n(t, q, p)=\partial_{z}\bigl(Q_n(t, q, p)+\I P_n(t, q, p)\bigr).
\end{equation}
Note that $Z_n(t,q,p)$ is invertible for $\abs{p} > 0$, which will be
proved in Lemma~\ref{lem:Zinv}. Without further indication, we are
going to use the Einstein summation convention, except for the branch
index $n$.

We denote $b: \RR^{2d} \to \CC$ as a generic function defined on
$\RR^{2d}$. For indices $j_1, j_2 \in \{1, \cdots, d\}$, define
operators $\mc{D}$ and $\mc{G}$ as
\begin{align}
  \label{op:T_j} & (\mc{D}_{n, j_1} b)(q, p)
  = - \partial_{z_l} \bigl(b(q, p) Z^{-1}_{n, j_1 l}(t, q, p)\bigr), \\
  \label{op:T_j1j2} & (\mc{G}_{n, j_1j_2} b)(q, p) = \partial_{z_l}
  Q_{n, j_1}(t, q, p) b(q, p) Z^{-1}_{n, j_2 l}(t, q, p),
\end{align}
provided the right hand sides are well defined (in particular, $b$ is
differentiable at $(q, p)$).

For $\nu \in \NN$, $j_{\alpha} \in \{1,
\cdots, d\}$ for $\alpha = 1, \cdots, \nu$, and $\veps > 0$, define
operators $\mc{T}_{n, j_1\cdots j_{\nu}}^{\nu, \veps}$ as
\begin{align}\label{op:T1}
  & \mc{T}^{1,\veps}_{n, j_1} b = \veps \mc{D}_{n, j_1} b, \\
  & \mc{T}^{2,\veps}_{n, j_1j_2} b = \veps \mc{G}_{n, j_1j_2} b +
  \veps^2 \mc{D}_{n,j_1}\mc{D}_{n,j_2} b, \\
  \shortintertext{and for $\nu \geq 3$,}
  \label{op:Tnu}
  & \mc{T}^{\nu,\veps}_{n, j_1\cdots j_{\nu}} b =
  \veps\mc{T}^{\nu-1,\veps}_{n, j_1\cdots j_{\nu-1}}\mc{D}_{n,
    j_{\nu}} b +\veps\sum_{\alpha=1}^{\nu-1} \mc{T}^{\nu-2,\veps}_{n,
    j_1\cdots j_{\alpha-1}j_{\alpha+1}\cdots j_{\nu-1}}\mc{G}_{n,
    j_{\alpha}j_{\nu}} b,
\end{align}
provided the right hand sides are well defined at $(q, p) \in
\RR^{2d}$. Notice that $\mc{D}$, $\mc{G}$ and $\mc{T}$ depend on $t$
as well, which we choose not to make explicit in notation for
simplicity, with the hope that no confusion will occur.

It is easy to see that we can rewrite $\mc{T}^{\nu,\veps}_{n, j_1\cdots
  j_{\nu}}$ in orders of $\veps$ as
\begin{equation}
  \mc{T}^{\nu,\veps}_{n, j_1\cdots j_{\nu}} b = \sum_{k=\lceil {\nu}/2 \rceil}^{\nu}
  \veps^{k} \mc{T}^{\nu,k}_{n, j_1 \cdots j_{\nu}} b,
\end{equation}
where $\lceil \cdot \rceil$ is the ceiling function.  The last
equality defines $\mc{T}^{\nu,k}_{n, j_1\cdots j_{\nu}}$.  The
definition of these operators $\mc{D}$, $\mc{G}$ and $\mc{T}$ can be
extended to vector-valued and matrix-valued functions so that the
operators act on the function elementwisely. For example, for the matrix-valued function $M:
\RR^{2d} \to \CC^{N\times N}$, we have
\begin{equation*}
  (\mc{D}_{n, j_1} M)_{kl} = \mc{D}_{n, j_1} M_{kl},
\end{equation*}
for $k, l = 1, \cdots, N$.

We define the operator $\mc{L}_{n, k}$ as follows. For $M: \RR \times
\RR^{2d} \to \CC^{N \times N}$ and $(t, q, p) \in \RR \times \RR^{2d}$
with $\abs{p} > 0$, $\mc{L}_{n, k}(M)$ is given by
\begin{equation}\label{op:L0}
  \begin{aligned}
    ( \mc{L}_{n, 0} M)(t, q, p) & =\I\Bigl(\partial_t S_n -P_{n,j}
    \partial_t Q_{n, j} + P_{n, j} A_j(Q_n) \Bigr) M(t, q, p) \\
    & = \I \Bigl( P_{n, j} A_j(Q_n) - H_n(Q_n, P_n) \Id_N \Bigr) M(t,
    q, p),
  \end{aligned}
\end{equation}
\begin{equation}\label{op:L1}
  \begin{aligned}
    (\mc{L}_{n, 1}M)(t, q, p) & =\partial_t M(t, q, p) \\
    & +\I \mc{D}_{n, j}\biggl(\Bigl((\partial_t P_{n,j}-\I\partial_t
    Q_{n, j})\Id_N \\
    & \qquad + \I
    A_j(Q_n) + P_{n,l}\partial_{j}A_l(Q_n)\Big) M(t, q, p)\biggr) \\
    & +\I \mc{G}_{n, j_1j_2}\biggl( \Bigl(\I\partial_{Q_{n,
        j_1}} A_{j_2}(Q_n) \\
    & \qquad + \frac{1}{2}
    P_{n,l}\partial^2_{j_1j_2}A_l(Q_n)\Bigr) M(t, q,
    p) \biggr),
  \end{aligned}
\end{equation}
where $S_n, Q_n$ and $P_n$ on the right hand sides are evaluated at
$(t, q, p)$ and we have introduced the short-hand,
\begin{equation*}
  \partial^{\nu}_{j_1\cdots j_{\nu}}A_l(Q_n)=
  \partial^{\nu}_{Q_{n,j_1}\cdots
    Q_{n,j_{\nu}}}A_l(Q_n), \quad\hbox{for any $\nu\geq 1$}.
\end{equation*}
Moreover, for $k \geq 2$, we define
\begin{equation}
  \begin{aligned}
    (\mc{L}_{n, k} M)(t, q, p) & =\sum_{\nu=k}^{2k}\mc{T}_{n,
      j_1\cdots j_{\nu}}^{\nu,k} \biggl(\Bigl(
    -\frac{1}{(\nu-1)!}\partial^{\nu-1}_{j_1\cdots
        j_{\nu-1}}A_{j_\nu}(Q_n)\\
    & \qquad +\frac{\I}{\nu!}  P_{n,l}\partial^{\nu}_{j_1\cdots
      j_{\nu}}A_l(Q_n)\Bigr) M(t, q, p)\biggr).
  \end{aligned}
\end{equation}
We remind the readers that $\mc{D}$, $\mc{G}$ and $\mc{T}$ depend on
$t$ implicitly.

We are ready to give the matrix-valued function $M_{n,k}$ now.
$M_{n,k}$ is defined for $ t \in [0, T]$ and $(q, p) \in \RR^{2d}$
with $\abs{p} > 0$. First, $M_{n,0}$ is given by
\begin{equation}
  M_{n,0}(t,q,p)=\sigma_{n,0}(t,q,p)
  R_n(Q_n(t, q, p) ,P_n(t, q, p))L_n(q,p)^{\TT},
\end{equation}
where $\sigma_{n,0}$ is determined by the evolution equation,
\begin{equation}\label{eq:sigma}
  \frac{\ud}{\ud t} \sigma_{n,0}(t, q, p) +
  \sigma_{n,0}(t, q, p)\lambda_n(t, q, p) = 0,
\end{equation}
with the initial condition
\begin{equation}\label{ini:sigma_la}
\sigma_{n,0}(0,q, p)=2^{d/2}.
\end{equation}
In \eqref{eq:sigma}, we have used the short-hand
\begin{equation}\label{eq:lambda}
  \begin{aligned}
    \lambda_n(t,q, p) & = L_n^{\TT} \bigl(\partial_{P_n} H_n \cdot
    \partial_{Q_n} R_n - \partial_{Q_n} H_n \cdot \partial_{P_n} R_n\bigr) \\
    &  - (\partial_{z_k} L_n)^{\TT} \Bigl( (A_j - \partial_{P_{n,j}}
    H_n) + \I ( \partial_{Q_{n,j}} H_n
    - P_{n,l} \partial_{j} A_l ) \Bigr) R_n Z_{n,jk}^{-1}  \\
    &  + \partial_{z_s} Q_{n,j} Z_{n,ks}^{-1} L_n^{\TT} \bigl( -
    \partial_{j} A_k + \frac{\I}{2} P_{n,l}
    \partial^2_{jk} A_l \bigr) R_n,
  \end{aligned}
\end{equation}
where $Q_n, P_n$ are evaluated at $(t, q, p)$, $A_j$'s are evaluated
at $Q_n$, and $H_n, L_n, R_n$ are evaluated at $(Q_n, P_n)$.

Notice that the action $\mc{L}_{n,0}$ is just multiplication with the
matrix $ \I \bigl(P_{n,j} A_j(Q_n) - H_n(Q_n, P_n) \Id_N \bigr)$ on
the left. We define the matrix,
\begin{equation}\label{eq:L0dagger}
  \mc{L}_{n,0}^{\dagger}(q, p) = \I \sum_{m\neq n}
  \bigl(H_n(q, p) - H_m(q, p)\bigr)^{-1}
  R_m(q, p) L_m^{\TT}(q, p).
\end{equation}
For $k \geq 1$, $M_{n,k}$ is given by
\begin{multline}\label{eq:a_s}
  M_{n,k}(t,q, p)=\sigma_{n,k}(t,q, p)R_n\bigl(Q_n(t, q, p), P_n(t, q,
  p)\bigr) L_n(q, p)^{\TT} \\
  + M_{n,k}^\perp(t,q, p),
\end{multline}
where $M_{n,k}^\perp(t,q, p)$ is given by
\begin{equation}\label{eq:a_sperp}
  M_{n,k}^\perp(t,q,p)=\mc{L}_{n, 0}^{\dagger}\bigl(Q_n(t,q,p),P_n(t, q, p)\bigr)
  \sum_{s=1}^k(\mc{L}_{n,s}
  M_{n,k-s})(t,q,p),
\end{equation}
and $\sigma_{n,k}(t,q, p)$ solves
\begin{multline}\label{eq:sigma_s}
  \frac{\ud \sigma_{n,k}}{\ud t} +
  \sigma_{n,k}\lambda_n+L_n\bigl(Q_n(t,q,p),
  P_n(t,q,p)\bigr)^{\TT}\bigg((\mc{L}_{n,1} M_{n,k}^\perp )(t, q, p)\\
  +\sum_{s=2}^{k+1} (\mc{L}_{n,s} M_{n,k+1-s})(t, q, p) \bigg)R_n(q,
  p)=0,
\end{multline}
with the initial condition $ \sigma_{n,k}(0,q, p)=0$.

We now state the main result.  The following theorem indicates that
the $K$-th order frozen Gaussian approximation (FGA) gives an order
$\Or(\veps^K)$ approximate solution to strictly linear hyperbolic
system.

\begin{theorem}\label{thm:main}
  Consider a strictly linear hyperbolic system \eqref{eq:hypersys}
  under Assumption~\ref{assump:B}. For a family
  of initial conditions $\{u_0^{\veps}\}$ that is asymptotically high
  frequency with cutoff $\delta$, and $\norm{u_0^{\veps}}_{L^2(\RR^d)} \leq M$,
  then for any $T>0$ and $K\in \NN$, there exist constants $C_{T, K}$ and
  $\veps_0> 0$, such that for $\veps \in (0, \veps_0]$,
  \begin{equation*}
    \max_{0\leq t \leq T} \left\lVert \mc{P}_t u_0^{\veps} - \mc{P}_{t, K, \delta}^{\veps}
      u_0^{\veps}
    \right\rVert_{\ms{L}(L^2(\RR^d))}
    \leq C_{T,K} M \veps^{K}.
  \end{equation*}
\end{theorem}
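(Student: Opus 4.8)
The plan is to treat $u^{\veps}_{\mathrm{app}}(t) := \mc{P}^{\veps}_{t,K,\delta} u_0^{\veps}$ as an approximate solution of \eqref{eq:hypersys} and then upgrade a pointwise-in-time residual bound into the global error estimate using the $L^2$ stability of the strictly hyperbolic flow. Writing $\mc{H} = \partial_t + \sum_{l=1}^d A_l(x)\partial_{x_l}$ and $v(t) = \mc{P}_t u_0^{\veps} - u^{\veps}_{\mathrm{app}}(t)$, since $\mc{P}_t u_0^{\veps}$ solves $\mc{H}(\mc{P}_t u_0^{\veps}) = 0$ we obtain
\begin{equation*}
  \mc{H} v = - \mc{R}^{\veps}(t), \qquad \mc{R}^{\veps}(t) := \mc{H} u^{\veps}_{\mathrm{app}}(t), \qquad v(0) = u_0^{\veps} - u^{\veps}_{\mathrm{app}}(0).
\end{equation*}
Strict hyperbolicity with smooth $A_l$ admits a symmetrizer, so the propagator satisfies $\norm{\mc{P}_\tau}_{\ms{L}(L^2)} \leq C_T$ for $\tau \in [0,T]$; Duhamel's formula $v(t) = \mc{P}_t v(0) - \int_0^t \mc{P}_{t-s}\mc{R}^{\veps}(s)\ud s$ then gives
\begin{equation*}
  \norm{v(t)}_{L^2} \leq C_T \norm{v(0)}_{L^2} + C_T \int_0^t \norm{\mc{R}^{\veps}(s)}_{L^2} \ud s.
\end{equation*}
Thus it suffices to prove the two estimates $\norm{v(0)}_{L^2} \lesssim \veps^K M$ and $\sup_{0\leq t\leq T}\norm{\mc{R}^{\veps}(t)}_{L^2}\lesssim \veps^K M$.

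For the residual I would apply $\mc{H}$ directly to the oscillatory integral \eqref{eq:ansatz}. Each $\partial_{x_l}$ hitting $e^{\I\Phi_n/\veps}$ produces $\tfrac{\I}{\veps}\partial_{x_l}\Phi_n = \tfrac{\I}{\veps}\bigl(P_{n,l} + \I(x - Q_n)_l\bigr)$, while $\partial_t$ produces the combination $\partial_t S_n - P_{n,j}\partial_t Q_{n,j} + \tfrac{\I}{2}\partial_t\abs{x-Q_n}^2 + \cdots$. The essential technical device is that the linearly growing factor $(x - Q_n)$ multiplying $e^{\I\Phi_n/\veps}$ can be rewritten through $\partial_z = \partial_q - \I\partial_p$ acting on the exponential, since $\partial_{z}\Phi_n$ reproduces $(x-Q_n)$ up to the matrix $Z_n$ of \eqref{eq:op_zZ}; inverting $Z_n$ (legitimate for $\abs{p}>0$ by \lemref{lem:Zinv}) and integrating by parts in $(q,p)$ is exactly what the operators $\mc{D}_{n,j}$, $\mc{G}_{n,j_1j_2}$ and their compositions $\mc{T}^{\nu,\veps}_{n,\cdots}$ encode. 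Afterwards I would Taylor expand $A_l(x)$ about $A_l(Q_n)$: the Gaussian weight in $\Phi_n$ localizes $x$ within $\Or(\sqrt{\veps})$ of $Q_n$, so the degree-$\nu$ term carries a factor $\veps^{\nu/2}$ and pairs with $\mc{T}^{\nu,\veps}$ to contribute at integer order in $\veps$, producing precisely the operators $\mc{L}_{n,0}, \mc{L}_{n,1}, \dots$.

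Collecting the result in powers of $\veps$, the order-$\veps^0$ term is $\I(P_{n,j}A_j(Q_n) - H_n\Id_N)M_{n,0} = \mc{L}_{n,0}M_{n,0}$, which vanishes because $M_{n,0}$ is built on the eigenprojector $R_n L_n^{\TT}$. At order $\veps^k$ the accumulated symbol decomposes into its component along $R_n$ and its component in $\mathrm{span}\{R_m : m\neq n\}$: the transverse part is annihilated by the choice $M_{n,k}^\perp = \mc{L}_{n,0}^{\dagger}\sum_{s=1}^{k}\mc{L}_{n,s}M_{n,k-s}$, since $\mc{L}_{n,0}^{\dagger}$ in \eqref{eq:L0dagger} is exactly the partial inverse of $\mc{L}_{n,0}$ on that complement (because $\mc{L}_{n,0}R_m = \I(H_m - H_n)R_m$), while the longitudinal part vanishes precisely when $\sigma_{n,k}$ solves the scalar transport equation \eqref{eq:sigma_s} with coefficient $\lambda_n$. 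Hence the construction kills all orders $0,\dots,K-1$, and $\mc{R}^{\veps}(t)$ reduces to a finite sum of terms $\mc{I}^{\veps}_n(t,\widetilde{M}^{\veps})$ with symbols $\widetilde{M}^{\veps}$ of size $\Or(\veps^K)$. Their $L^2$ operator norms are controlled by Proposition~\ref{prop:L2bound}; the whole manipulation is legitimate and the symbols remain smooth and bounded because Propositions~\ref{prop:boundaway}, \ref{prop:Hbound} and \ref{prop:Jacobian} confine the flow to a fixed compact set $K_{\delta_T}$ with uniformly bounded derivatives of $H_n, R_n, L_n, F^{\kappa_{n,t}}$ and $Z_n^{-1}$. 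Applied to $u_0^{\veps}$ this yields $\norm{\mc{R}^{\veps}(t)}_{L^2}\lesssim \veps^K M$.

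Finally, for the initial data I would use that at $t=0$ the flow is the identity, $S_n(0,\cdot)=0$ and $M_{n,0}(0,q,p) = 2^{d/2}R_n(q,p)L_n(q,p)^{\TT}$, so the computation in Proposition~\ref{prop:L2bound} collapses $\mc{P}^{\veps}_{0,K,\delta}$ to the leading operator $(\ms{F}^{\veps})^{\ast}\chi_{\delta}\ms{F}^{\veps}$ plus higher-order-in-$\veps$ corrections. Since $\{u_0^{\veps}\}$ is asymptotically high frequency with cutoff $\delta$ (Definition~\ref{def:highfreq}), $\ms{F}^{\veps}u_0^{\veps}$ is supported on $K_{\delta}$ up to $\Or(\veps^{\infty})$, where $\chi_{\delta}\equiv 1$, and the reconstruction identity $(\ms{F}^{\veps})^{\ast}\ms{F}^{\veps} = \Id$ of Proposition~\ref{prop:isometry} gives $(\ms{F}^{\veps})^{\ast}\chi_{\delta}\ms{F}^{\veps}u_0^{\veps} = u_0^{\veps} + \Or(\veps^{\infty})M$; bounding the remaining $\veps^k$ corrections the same way produces $\norm{v(0)}_{L^2}\lesssim \veps^K M$. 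The main obstacle is the residual computation of the two middle paragraphs: performing the $(q,p)$ integration by parts cleanly, managing the combinatorics hidden in $\mc{T}^{\nu,\veps}$ and $\mc{L}_{n,k}$, and — most delicately — verifying that every term generated at order $\geq \veps^K$ is genuinely an $L^2$-bounded frozen-Gaussian operator with symbol norm $\Or(\veps^K)$ uniformly on $[0,T]$, which is where Assumption~\ref{assump:B} and the compactness estimates of Section~\ref{sec:hypersys} carry the real weight.
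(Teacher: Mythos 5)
Your skeleton --- treat $\mc{P}^{\veps}_{t,K,\delta}u_0^{\veps}$ as an approximate solution, bound the residual and the initial mismatch, and close with the $L^2$ stability of Lemma~\ref{lem:apriori} --- is the paper's, and your description of the symbolic machinery (the identity $\I\partial_z\Phi_n = Z_n(x-Q_n)$, integration by parts encoded in $\mc{D}$, $\mc{G}$, $\mc{T}^{\nu,\veps}$, Taylor expansion of $A_l$ about $Q_n$, the transverse part killed by $\mc{L}^{\dagger}_{n,0}$ and the longitudinal part by the $\sigma_{n,k}$ transport equations) matches Lemma~\ref{lem:veps1} and Proposition~\ref{prop:highorder}. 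But there is a genuine one-order gap at the decisive step: the residual of the $K$-th order ansatz is $\Or(\veps^{K-1})$, not $\Or(\veps^{K})$ as you claim. Applying $\partial_t + A_l\partial_{x_l}$ to the ansatz produces terms starting at order $\veps^{-1}$ (from $\tfrac{\I}{\veps}\partial\Phi_n$), so the $K$ symbols $M_{n,0},\dots,M_{n,K-1}$ can only cancel the $K$ orders $\veps^{-1},\dots,\veps^{K-2}$; at order $\veps^{K-1}$ the choice of $\sigma_{n,K-1}$ kills the longitudinal component, but the transverse component of $\sum_{s=1}^{K}\mc{L}_{n,s}M_{n,K-s}$ survives because there is no $M_{n,K}$ to absorb it. This is precisely why Proposition~\ref{prop:highorder} and Proposition~\ref{prop:term2} carry $\veps^{K-1}$, not $\veps^{K}$. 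Your Duhamel bound therefore yields only $\Or(\veps^{K-1}M)$, one power short of the theorem. The paper recovers the lost order not by improving the residual but by a comparison trick absent from your proposal: it proves the $\Or(\veps^{K})$ error bound for the \emph{$(K{+}1)$-th} order (filtered) approximation, then writes $\mc{P}^{\veps}_{t,K+1,\delta}u_0^{\veps} - \mc{P}^{\veps}_{t,K,\delta}u_0^{\veps} = \mc{I}^{\veps}_n(t, \veps^{K}M_{n,K}\chi_{\delta})u_0^{\veps}$, whose $\ms{L}(L^2)$ norm is $\Or(\veps^{K})$ directly by Proposition~\ref{prop:L2bound}, and concludes by the triangle inequality --- so the $K$-th order approximation achieves error $\Or(\veps^{K})$ even though its own residual is only $\Or(\veps^{K-1})$.

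A secondary gap: you apply the operator directly to \eqref{eq:ansatz}, whose symbols are $M_{n,k}\chi_{\delta}$. The integrations by parts in $(q,p)$ then differentiate $\chi_{\delta}$, and the transport and recursion equations \eqref{eq:sigma}--\eqref{eq:sigma_s} do not cancel those terms, so the claimed vanishing of the low orders fails pointwise on the annulus $K_{\delta/2}\backslash K_{\delta}$. The paper handles this by introducing the filtered approximation $\wt{\mc{P}}^{\veps}_{t,K,\delta}$ of Section~\ref{sec:highorder}, in which the cutoff is threaded through the recursion \eqref{eq:wtM0}--\eqref{eq:wtsigma_k} so the cancellations are exact, and then proves in Lemma~\ref{lem:ptilde2} that the filtered and unfiltered operators differ by $\Or(\veps^{\infty})$ on asymptotically high frequency data, since the symbol difference is supported in $K_{\delta/2}\backslash K_{\delta}$ where $\ms{F}^{\veps}u_0^{\veps}$ is negligible. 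Your argument can be repaired the same way, or by observing that the stray cutoff-derivative terms have symbols supported off $K_{\delta}$ and estimating them against $\ms{F}^{\veps}u_0^{\veps}$ as in the proof of Proposition~\ref{prop:L2bound}; but as written the cancellation claim is false for the cutoff symbols. Finally, your initial-data estimate for $K\geq 2$ is asserted rather than proved: only $\sigma_{n,k}(0)=0$ is imposed by the construction, so the vanishing (or smallness) of the order-$\veps^{k}$ symbols at $t=0$, which you need for $\norm{v(0)}_{L^2}\lesssim \veps^{K}M$, requires justification.
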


\begin{remark}
  The cutoff $\delta$ is used in the formulation to avoid
  singularities presented at $\abs{p} = 0$. From an numerical point
  of view, the cutoff is quite natural. Indeed, in the numerical implementation of
  frozen Gaussian approximation \cites{LuYa:CMS, LuYa:MMS}, a
  cutoff on phase plane is always used for efficiency.
\end{remark}

\begin{remark}
  The necessity of assumption that the initial value is asymptotically
  high frequency is presented in the following example. It shows that the
  FGA does not work if the FBI transform of the initial condition
  concentrates around $\abs{p} = 0$ as $\veps \to 0$.
\end{remark}

\begin{example}\label{example}
  Consider the acoustic wave equation in two dimension with constant
  coefficients,
  \begin{equation*}
    \partial_t {u} + A_1 \partial_{x_1} {u}
    + A_2 \partial_{x_2} {u} = 0,
  \end{equation*}
  where
  \begin{equation*}
    A_1 =
    \begin{pmatrix}
      0 & 0 & 1 \\
      0 & 0 & 0 \\
      1 & 0 & 0
    \end{pmatrix},
    \quad
    A_2 =
    \begin{pmatrix}
      0 & 0 & 0 \\
      0 & 0 & 1 \\
      0 & 1 & 0
    \end{pmatrix}.
  \end{equation*}

  Then the eigenfunctions in \eqref{eigen:left}-\eqref{eigen:right} are given by
  \begin{equation*}
    H_1({q}, {p}) = 0, \quad
    H_{\pm}({q}, {p}) = \pm \abs{{p}},
  \end{equation*}
  where we have used the subscripts $\pm$ instead of number
  subscripts. This implies the system is strictly hyperbolic, and the
  corresponding eigenvectors are
  \begin{equation*}
    \begin{aligned}
      & {R}_1({q}, {p}) =
      \begin{pmatrix}
        p_2  \\
        - p_1 \\
        0
      \end{pmatrix}, \quad {R}_{\pm}({q}, {p}) =
      \begin{pmatrix}
        \pm p_1  \\
        \pm p_2  \\
        \abs{{p}}
      \end{pmatrix}, \\
      & {L}_1({q}, {p}) = \frac{1}{\abs{{p}}^2}
      \begin{pmatrix}
        p_2 \\
        - p_1 \\
        0
      \end{pmatrix}, \quad {L}_{\pm}({q}, {p}) = \frac{1}{2}
      \begin{pmatrix}
        \pm p_1/\abs{{p}}^2 \\
        \pm p_2 /\abs{{p}}^2 \\
        1/ \abs{{p}}
      \end{pmatrix}.
    \end{aligned}
  \end{equation*}

  It is easy to see that the system satisfies Assumption
  \ref{assump:B}.

  We focus on the ``+'' branch and omit the subscript ``+'' for
  convenience, then the Hamiltonian flow is given by
  \begin{equation}
    \begin{cases}
      \dsp\frac{\ud Q}{\ud t} = \frac{P}{|P|},
      \\[1em]
      \dsp\frac{\ud P}{\ud t} = 0,
    \end{cases}
  \end{equation}
  with the initial conditions $Q(0,q,p)=q$ and $P(0,q,p)=p$, which implies
  the following solution,
  \begin{equation}
    Q=q+\frac{p}{|p|}t,\quad P=p.
  \end{equation}
  Therefore
  \begin{equation}
    Z=2\Id_2 -\frac{\I t}{|p|}\bigg(\Id_2-\frac{p\otimes p}{|p|^2}\bigg),
    \quad \det Z=4-\frac{2\I t}{|p|}.
  \end{equation}
  According to the equation $(2.7)$ in \cite{LuYa:CMS}, the solution
  of amplitude $\sigma(t,q,p)$ is
  \begin{equation}
    \sigma=\bigg( 4-\frac{2\I t}{|p|} \bigg)^{1/2},
  \end{equation}
  with the branch of square root determined continuously in time by
  initial condition.

  If we take the initial condition of $u$ as
  \begin{equation}\label{ini:c_ex}
    u_0(x)=(u_{0,1},u_{0,2},u_{0,3})^\TT=\bigl(\exp(-|x|^2/2),0,0\bigr)^\TT,
  \end{equation}
  then the FBI transform gives
  \begin{equation}
    \begin{aligned}
      & \ms{F}^\veps(u_{0,1})=\frac{1}{(\pi \veps)^{1/2}(1+\veps)}
      \exp\bigg(\frac{ip\cdot q}{1+\veps}
      -\frac{|q|^2}{2(1+\veps)}-\frac{|p|^2}{2\veps(1+\veps)} \bigg), \\
      & \ms{F}^\veps(u_{0,2})=0, \\
      & \ms{F}^\veps(u_{0,3})=0.
    \end{aligned}
  \end{equation}

  The leading order frozen Gaussian approximation of $u$ is given by
  \begin{equation}
    u^{\veps}(t,x)=\frac{1}{2}(\ms{F})^*\bigl( \sigma(t,q,p) R(Q,P)L^{\TT}(q,p)
    \ms{F}^\veps(u_0) \bigr),
  \end{equation}
  By Proposition \ref{prop:isometry},
  \begin{equation}
    \begin{aligned}
      \norm{u^{\veps}}_{L^2}^2 &= \norm{\frac{1}{2}\sigma(t,q,p)
        R(Q,P)L^{\TT}(q,p) \ms{F}^\veps(u_0)}_{L^2}^2 \\
      &=\frac{1}{(\pi \veps)(1+\veps)^2}\int_{\RR^4}\abs{1-\frac{\I
          t}{2|p|}}\frac{p_1^4}{4|p|^4}
      \exp\bigg({-\frac{|q|^2}{1+\veps}-\frac{|p|^2}{\veps(1+\veps)}}\bigg)\ud
      q \ud p.
    \end{aligned}
  \end{equation}
  Notice that, as $\veps \to 0$,
  \begin{equation*}
    \int_{\RR^2}e^{-\frac{|q|^2}{1+\veps}}\ud q = \Or(1),
  \end{equation*}
  and
  \begin{equation*}
    \begin{aligned}
      \frac{1}{\veps}\int_{\RR^2}\abs{1-\frac{\I
          t}{2|p|}}\frac{p_1^4}{4|p|^4}
      e^{-\frac{|p|^2}{\veps(1+\veps)}}\ud p
      &=\frac{1}{\veps}\int_0^\infty\int_0^{2\pi}\abs{1-\frac{\I
          t}{2r}}\frac{\cos^4\theta}{4}
      e^{-\frac{r^2}{\veps(1+\veps)}}r\ud r \ud \theta, \\
      &=\frac{1}{\sqrt{\veps}}\int_0^\infty\int_0^{2\pi}\abs{\sqrt{\veps}r-\frac{\I
          t}{2}}\frac{\cos^4\theta}{4}
      e^{-\frac{r^2}{(1+\veps)}}\ud r \ud \theta\\
      & = \Or(\veps^{-1/2}),
    \end{aligned}
  \end{equation*}
  where the second equality is obtained by change of variable.

  Hence one has $\norm{u^{\veps}}_{L^2}$ is of the order
  $\Or(\veps^{-1/4})$, while $\norm{u}_{L^2}$ under the initial
  condition \eqref{ini:c_ex} is of the order $\Or(1)$, which implies
  FGA can not be a good approximation to the acoustic wave equation under
  this choice of initial condition.

\end{example}

\section{High order approximation}\label{sec:highorder}

In this section we introduce a high order approximation to the
solution of \eqref{eq:hypersys} based on frozen Gaussian
approximation.  This is a key step in the proof of
Theorem~\ref{thm:main}. We state and prove some preliminary lemmas
first.

For a canonical transformation $\kappa_{n,t}$, define
$Z^{\kappa_{n,t}}{(q,p)}$ for $\abs{p} > 0$ as
\begin{equation*}
  Z^{\kappa_{n,t}}(q, p) = \partial_z\bigl(Q^{\kappa_{n,t}}{(q,p)}
+ \I P^{\kappa_{n,t}}{(q,p)}\bigr),
\end{equation*}
which is related to $Z_n(t, q, p)$ defined in \eqref{eq:op_zZ} by
\begin{equation*}
  Z_n(t, q, p) = Z^{\kappa_{n, t}}(q, p).
\end{equation*}

\begin{lemma}\label{lem:Zinv}
  $Z^{\kappa_{n,t}}{(q,p)}$ is invertible for each $n=1,\cdots,N$ and
  $(q, p)\in \RR^{2d}$ with $\abs{p} > 0$.  Moreover, for any $k\geq
  0$ and $\delta > 0$, there exists constant $C_{k, \delta}$ such that
  \begin{equation}\label{eq:invZbound}
    \Lambda_{k, {\delta}}[(Z^{\kappa_{n,t}})^{-1}{(q,p)}] \leq C_{k, \delta}.
  \end{equation}
\end{lemma}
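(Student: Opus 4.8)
The plan is to prove invertibility together with a uniform bound on $(Z^{\kappa_{n,t}})^{-1}$ from the symplecticity of $F^{\kappa_{n,t}}$, and then to control its derivatives via Proposition~\ref{prop:Jacobian}. First I would rewrite $Z^{\kappa_{n,t}}$ through the Jacobian: with $\partial_z=\partial_q-\I\partial_p$ and the block form of $F^{\kappa_{n,t}}$, a direct computation identifies $Z^{\kappa_{n,t}}$, up to a transposition that is harmless here since it preserves both the singular values and the operator norm of the inverse, with $Z:=W^{\ast}F^{\kappa_{n,t}}W$, where $W=\begin{pmatrix}\Id_d\\-\I\Id_d\end{pmatrix}$ so that $W^{\ast}=(\Id_d,\ \I\Id_d)$. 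Writing $J=\begin{pmatrix}0&\Id_d\\-\Id_d&0\end{pmatrix}$, one checks directly that $W^{\ast}JW=-2\I\Id_d$.

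The heart of the argument is a pointwise lower bound that yields invertibility with a uniform constant. Fix $(q,p)$ with $\abs{p}>0$, set $F=F^{\kappa_{n,t}}(q,p)$, take any $c\in\CC^d$, and let $u=FWc=\begin{pmatrix}a\\b\end{pmatrix}$ with $a,b\in\CC^d$. Since $\kappa_{n,t}$ is canonical (Proposition~\ref{prop:Jacobian}), $F$ is real and symplectic, $F^{\TT}JF=J$, and hence
\begin{equation*}
  \I\,u^{\ast}Ju=\I\,c^{\ast}W^{\ast}F^{\TT}JFWc=\I\,c^{\ast}W^{\ast}JWc=2\abs{c}^2.
\end{equation*}
On the other hand $\I\,u^{\ast}Ju=\I(a^{\ast}b-b^{\ast}a)=-2\,\Im(a^{\ast}b)$, so $\Im(a^{\ast}b)=-\abs{c}^2$, and since $Zc=W^{\ast}u=a+\I b$,
\begin{equation*}
  \abs{Zc}^2=\abs{a}^2+\abs{b}^2-2\,\Im(a^{\ast}b)=\abs{a}^2+\abs{b}^2+2\abs{c}^2\geq 2\abs{c}^2.
\end{equation*}
Thus $Z$ is injective, hence invertible, with smallest singular value at least $\sqrt{2}$; equivalently $\abs{(Z^{\kappa_{n,t}})^{-1}}\leq 1/\sqrt{2}$ at every $(q,p)$ with $\abs{p}>0$. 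This proves invertibility and gives $\Lambda_{0,\delta}[(Z^{\kappa_{n,t}})^{-1}]\leq 1/\sqrt{2}$, with a constant independent of $\delta$, $t$, and $n$, settling the case $k=0$.

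For $k\geq 1$ I would argue by induction on $\abs{\alpha}$ using the standard expansion of $\partial^{\alpha}(Z^{-1})$ as a finite signed sum of products $Z^{-1}(\partial^{\beta_1}Z)Z^{-1}\cdots(\partial^{\beta_m}Z)Z^{-1}$ with $\beta_1+\cdots+\beta_m=\alpha$ and each $\abs{\beta_i}\geq 1$, obtained by iterating $\partial_{x_i}(Z^{-1})=-Z^{-1}(\partial_{x_i}Z)Z^{-1}$. The entries of $Z^{\kappa_{n,t}}$ are linear combinations of the entries of $F^{\kappa_{n,t}}$, so each $\partial^{\beta}Z$ is bounded on $K_{\delta}$ by $\Lambda_{\abs{\beta},\delta}[F^{\kappa_{n,t}}]\leq C_{\abs{\beta},\delta,T}$, uniformly for $t\in[0,T]$, by Proposition~\ref{prop:Jacobian}; combined with the uniform estimate $\abs{Z^{-1}}\leq 1/\sqrt{2}$ this yields $\Lambda_{k,\delta}[(Z^{\kappa_{n,t}})^{-1}]\leq C_{k,\delta,T}$, as claimed. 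The only genuinely nontrivial step is the symplectic positivity identity of the second paragraph, which simultaneously delivers invertibility and the (in fact completely uniform) bound on the inverse; the derivative estimates are then mechanical bookkeeping on top of Proposition~\ref{prop:Jacobian}.
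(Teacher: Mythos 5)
Your proposal is correct and takes essentially the same route as the paper: your quadratic-form identity $\abs{Zc}^2=\abs{FWc}^2+2\abs{c}^2$ is exactly the paper's symplecticity computation showing $Z^{\kappa_t}\bigl(Z^{\kappa_t}\bigr)^{\ast}\geq 2\,\Id_d$, which delivers invertibility with the uniform bound $\abs{(Z^{\kappa_{n,t}})^{-1}}\leq 1/\sqrt{2}$ (and your observation that $Z^{\kappa_{n,t}}$ differs from $W^{\ast}F^{\kappa_{n,t}}W$ only by a transpose is accurate and harmless). The only cosmetic difference is in the derivative estimates, where you iterate $\partial_{x_i}(Z^{-1})=-Z^{-1}(\partial_{x_i}Z)Z^{-1}$ while the paper invokes the representation of the inverse by minors together with Proposition~\ref{prop:Jacobian}; both are routine given the uniform lower bound, and your explicit tracking of the $T$-dependence of the constants for $k\geq 1$ is, if anything, slightly more careful than the statement in the paper.
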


\begin{proof}
  Since the proof is the same for each branch, we omit the subscript
  $n$ in notation for convenience.

  Observe that $Z^{\kappa_t}{(q,p)}$ can be rewritten as
  \begin{equation*}
    Z^{\kappa_t}{(q,p)} = \partial_{z}\bigl(Q^{\kappa_t}{(q,p)}
    + \I P^{\kappa_t}{(q,p)} \bigr)
    =
    \begin{pmatrix}
      \I \Id_d & \Id_d
    \end{pmatrix}
    (F^{\kappa_t})^{\TT}{(q,p)}
    \begin{pmatrix}
      - \I \Id_d \\
      \Id_d
    \end{pmatrix}.
  \end{equation*}
  Therefore,
  \begin{equation*}
    \begin{aligned}
      \bigl(Z^{\kappa_t} (Z^{\kappa_t})^{\ast}\bigr){(q,p)} & =
      \begin{pmatrix}
        \I \Id_d & \Id_d
      \end{pmatrix}
      (F^{\kappa_t})^{\TT}{(q,p)}
      \begin{pmatrix}
        \Id_d  & -\I \Id_d \\
        \I \Id_d & \Id_d
      \end{pmatrix}
      F^{\kappa_t}{(q,p)}
      \begin{pmatrix}
        -\I \Id_d \\
        \Id_d
      \end{pmatrix} \\
      & =
      \begin{pmatrix}
        \I \Id_d & \Id_d
      \end{pmatrix}
      \bigl((F^{\kappa_t})^{\TT} F^{\kappa_t}\bigr){(q,p)}
      \begin{pmatrix}
        - \I \Id_d \\
        \Id_d
      \end{pmatrix} \\
      & \quad +
      \begin{pmatrix}
        \I \Id_d & \Id_d
      \end{pmatrix}
      (F^{\kappa_t})^{\TT}{(q,p)}
      \begin{pmatrix}
        0 & -\I \Id_d \\
        \I \Id_d & 0
      \end{pmatrix}
      F^{\kappa_t}{(q,p)}
      \begin{pmatrix}
        -\I \Id_d \\
        \Id_d
      \end{pmatrix}  \\
      & =
      \begin{pmatrix}
        \I \Id_d & \Id_d
      \end{pmatrix}
      \bigl((F^{\kappa_t})^{\TT} F^{\kappa_t}\bigr){(q,p)}
      \begin{pmatrix}
        -\I \Id_d \\
        \Id_d
      \end{pmatrix} + 2\Id_d.
    \end{aligned}
  \end{equation*}
  In the last equality, we have used the property of symplecticity of
  $F^{\kappa_t}{(q,p)}$.  Therefore $\bigl(Z^{\kappa_t}
  (Z^{\kappa_t})^{\ast}\bigr){(q,p)}$ is positive definite, which
  implies $\det\bigl(Z^{\kappa_t}{(q,p)}\bigr)$ is bounded away
  uniformly from zero for $\abs{p} > 0$. The invertibility of
  $Z^{\kappa_t}{(q,p)}$ and bounds \eqref{eq:invZbound} follows from
  the representation of $(Z^{\kappa_t})^{-1}{(q,p)}$ by minors and
  Proposition~\ref{prop:Jacobian}.
\end{proof}

The following Lemma plays an important role in frozen Gaussian
approximation.

\begin{lemma}\label{lem:veps1}
  For each $n = 1, \cdots, N$, $ t\in [0, T]$, let $ b(y, q, p):
  \RR^{3d} \to {\CC^{N}}$ such that for any $y \in \RR^d$, $\supp b(y,
  \cdot, \cdot) \subset {K_{\delta}}$. Assume that there exists some
  $m \in \RR$ that for any $k \in \NN$,
  \begin{equation*}
    \sup_{y \in \RR^d} (1 + \abs{y}^2)^{-m/2}
    \Lambda_{k, \delta}[b(y, \cdot, \cdot)]
    < \infty.
  \end{equation*}
  Then for a multi-index $(j_1, \cdots, j_{\nu})$ with $\nu \geq 1$,
  \begin{equation}\label{eq:con3}
    ({x} - {Q}_n(t, q, p))_{j_1}\cdots ({x} - {Q}_n(t, q, p))_{j_{\nu}}
    b{(y,q,p)} \sim \mc{T}^{\nu,\veps}_{n, j_1\cdots j_{\nu}}b{(y,q,p)}.
  \end{equation}
  Here $\mc{T}^{\nu,\veps}_{n, j_1\cdots j_{\nu}}$ is defined by the
  recursive relation \eqref{op:T1}-\eqref{op:Tnu}, corresponding to
  the $n$-th branch. Here, we have used the notation $ f \sim g $, if
  \begin{equation}
    \int_{\RR^{3d}} f(y,q,p) e^{\frac{\I}{\veps} \Phi_n{(t,x,y,q,p)}} \ud y \ud p \ud q
    = \int_{\RR^{3d}} g(y,q,p) e^{\frac{\I}{\veps} \Phi_{n}{(t,x,y,q,p)}} \ud y \ud p \ud
    q.
  \end{equation}
\end{lemma}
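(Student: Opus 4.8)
The plan is induction on $\nu$, built on a single algebraic identity that lets a prefactor $(x - Q_n)$ be exchanged for a $\partial_z$-derivative falling on $e^{\I\Phi_n/\veps}$.

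First I would compute $\partial_{z_l}\Phi_n = (\partial_{q_l} - \I\partial_{p_l})\Phi_n$ directly from \eqref{eq:phim}. The defining relations of the action, $\partial_p S_n = (\partial_p Q_n)P_n$ and $\partial_q S_n = -p + (\partial_q Q_n)P_n$, are exactly what forces the non-$(x-Q_n)$ contributions to cancel: the $\partial_q Q_n\,P_n$ piece from $S_n$ cancels the matching piece from $P_n\cdot(x-Q_n)$, the $-p$ from $S_n$ cancels the $+p$ from $-p\cdot(y-q)$, and the two $(y-q)$ terms cancel between the $\partial_q$ and $\partial_p$ halves. What survives is
\begin{equation*}
  \partial_{z_l}\Phi_n = -\I\, Z_{n,lm}(t,q,p)\,\bigl(x - Q_n(t,q,p)\bigr)_m,
\end{equation*}
i.e. $Z_n (x - Q_n) = \I\,\partial_z\Phi_n$. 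Since $Z_n$ is invertible for $\abs{p} > 0$ by Lemma~\ref{lem:Zinv}, this rearranges to $(x - Q_n)_j = \I\, Z^{-1}_{n,jl}\partial_{z_l}\Phi_n$, and combining with $\partial_{z_l}e^{\I\Phi_n/\veps} = \tfrac{\I}{\veps}(\partial_{z_l}\Phi_n)\,e^{\I\Phi_n/\veps}$ produces the working identity
\begin{equation*}
  (x - Q_n)_j\, e^{\I\Phi_n/\veps} = \veps\, Z^{-1}_{n,jl}\,\partial_{z_l}\bigl(e^{\I\Phi_n/\veps}\bigr).
\end{equation*}

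For the base case $\nu = 1$ I would multiply this identity by $b$, integrate, and then integrate by parts in $(q,p)$. The hypothesis $\supp b(y,\cdot,\cdot) \subset K_{\delta}$ kills all boundary terms, while the Gaussian factor $e^{-\abs{y-q}^2/2\veps}$ contained in $e^{\I\Phi_n/\veps}$ dominates the polynomial-in-$y$ growth of $b$ (with $q$ confined to the compact set $K_{\delta}$), so every integral converges and the manipulations are legitimate. The integration by parts moves $\partial_{z_l}$ onto $b\,Z^{-1}_{n,jl}$, and by \eqref{op:T_j} and \eqref{op:T1} this is exactly $\veps\,\mc{D}_{n,j}b = \mc{T}^{1,\veps}_{n,j}b$, proving \eqref{eq:con3} for $\nu = 1$ (componentwise, since $\mc{D}$ acts entrywise on the $\CC^N$-valued $b$).

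For the inductive step I would apply the same one-factor move to the last factor $(x - Q_n)_{j_\nu}$, regarding $c = (x - Q_n)_{j_1}\cdots(x - Q_n)_{j_{\nu-1}}b$ as the amplitude. After integrating by parts, $\mc{D}_{n,j_\nu}c = -\partial_{z_l}(c\, Z^{-1}_{n,j_\nu l})$ splits by the product rule into a term where $\partial_{z_l}$ lands on $b\,Z^{-1}_{n,j_\nu l}$, giving $\prod_{\beta < \nu}(x - Q_n)_{j_\beta}\,\mc{D}_{n,j_\nu}b$, and $\nu-1$ terms where it lands on a factor $(x - Q_n)_{j_\alpha}$; there $\partial_{z_l}(x - Q_n)_{j_\alpha} = -\partial_{z_l}Q_{n,j_\alpha}$ recombines with $Z^{-1}_{n,j_\nu l}$ into $\mc{G}_{n,j_\alpha j_\nu}b$ and deletes the $\alpha$-th factor. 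Applying the induction hypothesis (at orders $\nu-1$ and $\nu-2$) to each resulting term reproduces, term for term, the recursion \eqref{op:Tnu} defining $\mc{T}^{\nu,\veps}_{n,j_1\cdots j_\nu}$. The point demanding the most care, and the main obstacle, is verifying that each new amplitude $\mc{D}_{n,j_\nu}b$ and $\mc{G}_{n,j_\alpha j_\nu}b$ again satisfies the lemma's hypotheses so that the induction closes: compact support in $K_{\delta}$ and the polynomial bound in $y$ persist because $\mc{D}$ and $\mc{G}$ differentiate only in $(q,p)$, and the requisite smoothness together with uniform $K_{\delta}$-bounds on $Z^{-1}_n$ and its derivatives is supplied by Lemma~\ref{lem:Zinv} and Proposition~\ref{prop:boundaway}.
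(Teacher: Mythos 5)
Your proposal is correct and follows essentially the same route as the paper's proof: the key identity $\I\,\partial_z\Phi_n = Z_n\,(x-Q_n)$ (which the paper derives by verifying $\partial_q S_n = -p + (\partial_q Q_n)P_n$ and $\partial_p S_n = (\partial_p Q_n)P_n$ along the flow, where you instead invoke the action's defining relations), followed by integration by parts in $(q,p)$ for $\nu=1$ and the same product-rule induction matching the recursion \eqref{op:Tnu}. Your explicit check that the amplitudes $\mc{D}_{n,j_\nu}b$ and $\mc{G}_{n,j_\alpha j_\nu}b$ again satisfy the hypotheses is a point the paper leaves implicit, but it does not change the argument.
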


\begin{proof}

  We omit the subscript $n$ in the proof for simplicity, because the
  argument is the same for each branch.

  Observe that at $t=0$, for $\abs{p} > 0$,
  \begin{align*}
    & \partial_{q} S(0, q, p) - \partial_q Q(0, q, p) P(0, q, p) + p=0, \\
    & \partial_{p} S(0, q, p) - \partial_p Q(0, q, p) P(0, q, p) =0.
  \end{align*}
  Using \eqref{eq:Hflow} and \eqref{eq:S}, we have
  \begin{align*}
    \frac{\ud}{\ud t}(\partial_{q}S-\partial_{q} Q P + p) & = \partial_{q}
    (\partial_t S)-\partial_{q} (\partial_t Q) P-
    \partial_{q} Q \partial_t P \\
    & = \partial_q (P \cdot \partial_P H - H) -\partial_q (\partial_P
    H) P + \partial_q Q \partial_Q H \\
    & = \partial_q P \partial_P H- ( \partial_q Q
    \partial_Q + \partial_q P \partial_P) H +\partial_q Q \partial_Q H
    = 0,
  \end{align*}
  {where $S$, $Q$ and $P$ are evaluated at $(t,q,p)$, and
  $\partial_QH$, $\partial_PH$ are evaluated at $(Q,P)$.}

  Analogously we have
  \[
  \frac{\ud}{\ud t}\bigl(\partial_p S{(t,q,p)}
   - \partial_p Q{(t,q,p)} P{(t,q,p)}\bigr) = 0.
  \]
  Therefore for all $t \in [0, T]$, we have
  \begin{align}\label{eq:Sq}
    & \partial_q S{(t,q,p)} - \partial_q Q{(t,q,p)} P{(t,q,p)} + p = 0,
    \\ \label{eq:Sp}
    & \partial_p S{(t,q,p)} - \partial_p Q{(t,q,p)} P{(t,q,p)} = 0.
  \end{align}

  Straightforward calculations yield
  \begin{align*}
    & \partial_q \Phi{(t,x,y,q,p)} = \bigl(\partial_q P{(t,q,p)}
 - \I \partial_q Q{(t,q,p)}\bigr)\bigl(x-Q{(t,q,p)}\bigr) -\I
    (y-q), \\
    & \partial_p \Phi{(t,x,y,q,p)} = \bigl(\partial_p P{(t,q,p)}
  - \I \partial_p Q{(t,q,p)}\bigr) \bigl(x-Q{(t,q,p)}\bigr)- (y-q).
  \end{align*}
  This implies that
  \begin{equation}\label{eq:dzPhi}
    \I\partial_z \Phi{(t,x,y,q,p)}=Z{(t,q,p)}\bigl(x-Q{(t,q,p)}\bigr),
  \end{equation}
  where $\partial_z$ and $Z{(t,q,p)}$ are defined in \eqref{eq:op_zZ}.

  Using \eqref{eq:dzPhi} and Lemma~\ref{lem:Zinv} for the
  invertibility of $Z{(t,q,p)}$ on the support of $b{(y,q,p)}$,
  one has, for $j = 1, \cdots, d$,
  \begin{multline*}
    \int_{\RR^{3d}} \bigl(x - Q{(t,q,p)}\bigr)_j e^{\frac{\I}{\veps} \Phi{(t,x,y,q,p)}} b(y, q, p) \ud
    y\ud q\ud p \\ = \veps\int_{\RR^{3d}} Z^{-1}_{jk}{(t,q,p)}
    \left(\frac{\I}{\veps}\partial_{z_k}\Phi{(t,x,y,q,p)}\right)
    e^{\frac{\I}{\veps} \Phi{(t,x,y,q,p)}}  b(y, q, p) \ud y\ud q \ud p.
  \end{multline*}
  We remark that the integrability of the above integral follows from
  the exponential decay in $y$ of $\exp(\I \Phi{(t,x,y,q,p)}/\veps)$ and compact
  support of $b{(y,q,p)}$ in $(q, p)$. Integration by parts gives
  \begin{align*}
    \int_{\RR^{3d}} \bigl(x &- Q{(t,q,p)}\bigr)_j e^{\frac{\I}{\veps} \Phi{(t,x,y,q,p)}} b(y, q, p) \ud
    y\ud q\ud p \\ &= -\veps \int_{\RR^{3d}} \partial_{z_k}\bigl( Z^{-1}_{jk}{(t,q,p)}
    b{(y,q,p)}\bigr) e^{\frac{\I}{\veps} \Phi{(t,x,y,q,p)}} \ud y \ud q \ud p \\
    & = \int_{\RR^{3d}} \veps\mc{D}_j b{(y,q,p)} e^{\frac{\I}{\veps} \Phi{(t,x,y,q,p)}} \ud y \ud q \ud p
    \\
    & = \int_{\RR^{3d}} \mc{T}^{1,\veps}_j b{(y,q,p)} e^{\frac{\I}{\veps} \Phi{(t,x,y,q,p)}} \ud y \ud q \ud p.
  \end{align*}
  This proves \eqref{eq:con3} for $\nu=1$.

  Making use of the above equality twice produces \eqref{eq:con3} for $\nu=2$,
  \begin{align*}
    \bigl(x - Q{(t,q,p)}\bigr)_{j_1} \bigl(x &- Q{(t,q,p)}\bigr)_{j_2} b{(y,q,p)}
    \\ & \sim \veps\mc{D}_{j_2} \Bigl(\bigl(x-Q{(t,q,p)}\bigr)_{j_1} b{(y,q,p)}\Bigr) \\
    & = \veps \mc{G}_{j_1j_2} b{(y,q,p)} +\veps
    \bigl(x-Q{(t,q,p)}\bigr)_{j_1} \mc{D}_{j_2} b{(y,q,p)}
    \\
    & \sim\veps \mc{G}_{j_1j_2} b{(y,q,p)} +\veps^2
    \mc{D}_{j_1} \mc{D}_{j_2} b{(y,q,p)}
    \\& =\mc{T}^{2,\veps}_{j_1j_2}b{(y,q,p)}.
  \end{align*}
  Furthermore, we have
  \begin{equation*}
    \begin{aligned}
      \bigl({x} &- {Q}{(t,q,p)}\bigr)_{j_1}\cdots \bigl({x} - {Q}{(t,q,p)}\bigr)_{j_{\nu}} b{(y,q,p)}
     \\ & \sim\veps
      \mc{D}_{j_{\nu}}\Bigl(\bigl({x} - {Q}{(t,q,p)}\bigr)_{j_1}\cdots
      \bigl({x} - {Q}{(t,q,p)}\bigr)_{j_{\nu-1}} b{(y,q,p)}\Bigr) \\
      & =\veps \bigl({x} - {Q}{(t,q,p)}\bigr)_{j_1}\cdots \bigl({x} - {Q}{(t,q,p)}\bigr)_{j_{\nu-1}}
      \mc{D}_{j_{\nu}}b{(y,q,p)} \\
      & \qquad +\veps\sum_{\alpha=1}^{\nu-1}\bigl({x} - {Q}{(t,q,p)}\bigr)_{j_1}\cdots
\bigl({x} - {Q}{(t,q,p)}\bigr)_{j_{\alpha-1}} \\
      & \qquad \qquad \times \bigl({x} - {Q}{(t,q,p)}\bigr)_{j_{\alpha+1}}\cdots
     \bigl({x} - {Q}{(t,q,p)}\bigr)_{j_{\nu-1}} \mc{G}_{j_{\alpha}j_{\nu}}b{(y,q,p)} \\
      & \sim\veps\mc{T}^{\nu-1,\veps}_{j_1j_2\cdots
        j_{\nu-1}}\mc{D}_{j_{\nu}}b{(y,q,p)} \\
      & \qquad
      +\veps\sum_{\alpha=1}^{\nu-1}\mc{T}^{\nu-2,\veps}_{j_1\cdots
        j_{\alpha-1}j_{\alpha+1}\cdots
        j_{\nu-1}}\mc{G}_{j_{\alpha}j_{\nu}}b{(y,q,p)} \\
      & =\mc{T}^{\nu,\veps}_{j_1\cdots j_{\nu}}b{(y,q,p)},
    \end{aligned}
  \end{equation*}
  where the last step follows by the recursive relation
  \eqref{op:Tnu}.  This proves \eqref{eq:con3} for $\nu\geq 3$.

\end{proof}

To construct a high order approximation to the solution, we need a
filtered version of frozen Gaussian approximation, defined as
follows. The relationship between $\wt{\mc{P}}^{\veps}_{t, K, \delta}$
and $\mc{P}^{\veps}_{t, K, \delta}$ will be addressed in
Lemma~\ref{lem:ptilde2} in the end of this section.
\begin{equation}\label{eq:ansatzdelta}
  \begin{aligned}
    (\wt{\mc{P}}_{t,K, \delta}^{\veps} u_0)(x) & = \sum_{n=1}^N
    \left(\mc{I}_n^{\veps}\bigl(t, \wt{\mc{M}}^{\veps}_{n, K, t, \delta}
      \bigr) u_0{(y)}
    \right)(x) \\
    & = \frac{1}{(2\pi \veps)^{3d/2}} \sum_{n=1}^N \int_{\RR^{3d}}
    e^{\I\Phi_n{(t,x,y,q,p)}/\veps} \wt{\mc{M}}^{\veps}_{n,K, t, \delta}(q, p)
    u_{0}(y) \ud q \ud p \ud y.
  \end{aligned}
\end{equation}
Here the matrix-valued function $\wt{\mc{M}}^{\veps}_{n,K, t, \delta}:
\RR^{2d} \to \CC^{N \times N}$ is given by
\begin{equation}
  \wt{\mc{M}}^{\veps}_{n,K,t,\delta}(q, p) =
  \wt{\mc{M}}^{\veps}_{n,K,\delta}(t, q, p) = \sum_{k=0}^{K-1}\veps^k\wt{M}_{n,k,\delta}(t,q,p).
\end{equation}
For $(q, p) \in \RR^{2d}$ and $t \in [0, T]$,
$\wt{M}_{n,k,\delta}(t,q,p)$ is defined as follows. For $k = 0$,
\begin{equation}\label{eq:wtM0}
  \wt{M}_{n, 0,\delta}(t,q,p)=
  \wt{\sigma}_{n, 0,\delta}(t,q,p)
  R_n\bigl(Q_n{(t,q,p)},P_n{(t,q,p)}\bigr)L_n^{\TT}(q,p)
  \chi_{\delta}(q, p),
\end{equation}
where $\chi_{\delta}(q, p)$ is the filter function given before, and
$\wt{\sigma}_{n, 0,\delta}(t,q,p)$ satisfies
\begin{equation}\label{eq:wtsigma_0}
  \frac{\ud}{\ud t}\wt{\sigma}_{n, 0,\delta}(t,q,p)
  \chi_{\delta}(q, p)+\wt{\sigma}_{n, 0,\delta}(t,q,p)
  \lambda_n(t,q,p)\chi_{\delta}(q, p) = 0,
\end{equation}
with the initial condition $\wt{\sigma}_{n, 0, \delta}(0, q, p) = 2^{d/2}$
and $\lambda_n(t,q,p)$ is given by \eqref{eq:lambda}.

For $k \geq 1$, $\wt{M}_{n,k,\delta}$ is given recursively by
\begin{equation}\label{eq:wtMk}
  \wt{M}_{n, k,\delta}(t,q,p)=
  \wt{\sigma}_{n, k,\delta}(t,q,p)R_n(Q_n,P_n)L_n(q,p)^\TT\chi_{\delta}(q, p)
  +\wt{M}_{n,k,\delta}^\perp(t,q,p),
\end{equation}
where $\wt{M}_{n,k,\delta}^\perp(t,q,p)$ is given by
\begin{equation}\label{eq:wtMkperp}
  \wt{M}_{n, k,\delta}^\perp{(t,q,p)}=\mc{L}_{n,0}^{\dagger}(Q_n,P_n)
  \sum_{s=1}^k \bigl(\mc{L}_{n,s} \wt{M}_{n, k-s,\delta}\bigr){(t,q,p)},
\end{equation}
and $\wt{\sigma}_{n, k,\delta}{(t,q,p)}$ satisfies
\begin{equation}\label{eq:wtsigma_k}
\begin{aligned}
  \frac{\ud}{\ud t} \wt{\sigma}_{n, k,\delta}{(t,q,p)}\chi_{\delta}(q, p)&+
  \wt{\sigma}_{n, k,\delta}{(t,q,p)} \lambda_n(t,q,p)\chi_{\delta}(q, p) \\&
  + L_n(Q_n,P_n)^\TT\biggl(
  (\mc{L}_{n,1}\wt{M}_{n,k,\delta}^\perp){(t,q,p)}
\\& \qquad +\sum_{s=2}^{k+1} (\mc{L}_{n,
    s}\wt{M}_{n, k+1-s,\delta}){(t,q,p)} \biggr)R_n(q,p)=0,
\end{aligned}
\end{equation}
with the initial condition $\wt{\sigma}_{n,k,\delta}(0, q, p) = 0$.  In
\eqref{eq:wtMkperp} and \eqref{eq:wtsigma_k}, $Q_n$ and $P_n$ are
evaluated at $(t,q,p)$. We remark that, from the definitions
\eqref{eq:wtM0} and \eqref{eq:wtMk}, it is clear that the value of
$\wt{\sigma}_{n, k, \delta}{(t,q,p)}$ outside $\supp \chi_{\delta}(q,
p) \subset K_{\delta/2}$ will not affect the value of $\wt{M}_{n, k,
  \delta}{(t,q,p)}$.

\begin{lemma}\label{lem:L0}
  For each $n = 1, \cdots, N$, $k \in \NN$ and any $\delta > 0$, there
  exists constant $C_{k, \delta}$ that
  \begin{equation}\label{eq:boundL0}
    \Lambda_{k, \delta}[\mc{L}_{n, 0}^{\dagger}{(q,p)}] \leq C_{k, \delta}.
  \end{equation}
\end{lemma}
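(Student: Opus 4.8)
The plan is to reduce \eqref{eq:boundL0} to the smoothness of the constituent functions on the compact set $K_\delta$, in exactly the spirit of the proof of Proposition~\ref{prop:Hbound}. The only feature of the definition \eqref{eq:L0dagger} that needs genuine attention is the scalar factor $(H_n - H_m)^{-1}$; the eigenvector factors $R_m$ and $L_m^{\TT}$ are already controlled by Proposition~\ref{prop:Hbound}. First I would record that $K_\delta \subset \{(q,p)\in\RR^{2d} : \abs{p} > 0\}$, since $\abs{p}\geq\delta$ on $K_\delta$ by \eqref{def:Kdelta}. Throughout this region $H_n$, $H_m$, $R_m$ and $L_m$ are smooth by the discussion in Section~\ref{sec:hypersys}, and Proposition~\ref{prop:Hbound} already gives $\Lambda_{k,\delta}[H_n], \Lambda_{k,\delta}[R_m], \Lambda_{k,\delta}[L_m] \leq C_{k,\delta}$ for every $k$.

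The key step is to control the spectral gap. By strict hyperbolicity the eigenvalues $\{H_l(q,p)\}_{l=1}^N$ are distinct whenever $\abs{p}>0$, so $H_n(q,p) - H_m(q,p)\neq 0$ for $m\neq n$ throughout $K_\delta$. Since $H_n - H_m$ is continuous and $K_\delta$ is compact, it attains a positive minimum: there is $c_\delta > 0$ with $\abs{H_n(q,p) - H_m(q,p)} \geq c_\delta$ on $K_\delta$. Consequently $(H_n - H_m)^{-1}$ is smooth on $K_\delta$, and by the chain rule its derivative of order $\abs{\alpha}$ is a finite polynomial combination of derivatives of $H_n - H_m$ of order $\leq \abs{\alpha}$ divided by powers of $H_n - H_m$; using the lower bound $c_\delta$ together with $\Lambda_{k,\delta}[H_n]$ and $\Lambda_{k,\delta}[H_m]$, this yields $\Lambda_{k,\delta}[(H_n - H_m)^{-1}] \leq C_{k,\delta}$.

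Finally, $\mc{L}_{n,0}^{\dagger}$ is a finite sum over $m \neq n$ of products of the three factors $(H_n - H_m)^{-1}$, $R_m$ and $L_m^{\TT}$. Applying the Leibniz rule, the derivatives of each product up to order $k$ are bounded by products of the derivatives of the individual factors, each of which has just been estimated; summing the finitely many terms over $m\neq n$ gives \eqref{eq:boundL0}. The only real obstacle is the spectral-gap lower bound, which is precisely where the strict hyperbolicity hypothesis enters the argument; once $\abs{H_n - H_m}$ is bounded below on $K_\delta$, the remainder is routine bookkeeping with the Leibniz rule on a compact set.
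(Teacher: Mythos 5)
Your proof is correct and follows essentially the same route as the paper: the paper's own argument also rests on the spectral-gap bound $\inf_{(q,p)\in K_\delta}\inf_{m\neq n}\abs{H_n(q,p)-H_m(q,p)}\geq g_\delta>0$ obtained from strict hyperbolicity and the compactness of $K_\delta$, after which the estimate follows from smoothness of the factors. Your version merely makes explicit the Leibniz-rule bookkeeping and the appeal to Proposition~\ref{prop:Hbound} that the paper leaves implicit in ``the estimate follows easily.''
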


\begin{proof}

  Recall that $\mc{L}_{n, 0}^{\dagger}{(q,p)}$ is defined in
  \eqref{eq:L0dagger} as
  \begin{equation*}
    \mc{L}_{n,0}^{\dagger}(q, p) = \I \sum_{m\neq n} \bigl(H_n(q, p)
    - H_m(q, p)\bigr)^{-1}
    R_m(q, p) L_m^{\TT}(q, p).
  \end{equation*}
  By strict hyperbolicity and compactness, there exists $g_{\delta} >
  0$ such that
  \begin{equation}
    \inf_{(q, p) \in K_{\delta}} \inf_{n \neq m}
    \abs{H_n(q, p) - H_m(q, p)} \geq g_{\delta}.
  \end{equation}
  The estimate \eqref{eq:boundL0} follows easily.

\end{proof}

\begin{lemma}\label{lem:wtMbound}
  For each $n = 1, \cdots, N$, any $t \in [0, T]$ and any $k \in \NN$,
  we have $\supp \wt{M}_{n, k, \delta}{(t,\cdot,\cdot)} \subset
  K_{\delta/2}$, $\supp \partial_t \wt{M}_{n, k,
    \delta}{(t,\cdot,\cdot)} \subset K_{\delta/2}$. Moreover, for any
  $s \in \NN$, there exists constant $C_{k,s, \delta, T}$ that
  \begin{align*}
    & \sup_{t \in [0, T]} \Lambda_{s, \delta/2}[ \wt{M}_{n, k,
      \delta}{(t,q,p)} ]
    \leq C_{k, s, \delta, T}; \\
    & \sup_{t \in [0, T]} \Lambda_{s, \delta/2}[ \partial_t \wt{M}_{n,
      k, \delta}{(t,q,p)}] \leq C_{k, s, \delta, T}.
  \end{align*}
\end{lemma}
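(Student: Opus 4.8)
The plan is to prove the support statement and the derivative bounds together, by a single induction on the order $k$, handling $\wt{M}_{n,k,\delta}$ and $\partial_t\wt{M}_{n,k,\delta}$ simultaneously. It is convenient to call a family $g(t,q,p)$ \emph{admissible} if $\supp g(t,\cdot,\cdot)\subset K_{\delta/2}$ for every $t\in[0,T]$ and if $\sup_{t\in[0,T]}\Lambda_{s,\delta/2}[g(t,\cdot,\cdot)]$ and $\sup_{t\in[0,T]}\Lambda_{s,\delta/2}[\partial_t g(t,\cdot,\cdot)]$ are finite for every $s\in\NN$; the lemma asserts precisely that each $\wt{M}_{n,k,\delta}$ is admissible. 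Three facts drive the argument: (i) the flow $(q,p)\mapsto(Q_n(t,q,p),P_n(t,q,p))$ carries $K_{\delta/2}$ into some $K_{\delta_T}$ by Proposition~\ref{prop:boundaway} and has $(q,p)$-derivatives bounded uniformly in $t\in[0,T]$ by Proposition~\ref{prop:Jacobian}; (ii) $A_l$, $H_n$, $R_n$, $L_n$, together with $Z_n^{-1}$ and $\mc{L}_{n,0}^{\dagger}$, all have bounded derivatives on compact sets away from $\abs{p}=0$ by Proposition~\ref{prop:Hbound} and Lemmas~\ref{lem:Zinv} and~\ref{lem:L0}; and (iii) $\partial_t Q_n=\partial_{P_n}H_n$ and $\partial_t P_n=-\partial_{Q_n}H_n$ by \eqref{eq:Hflow}. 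Combining (i) and (ii) through the chain rule shows that a composition such as $R_n(Q_n,P_n)$, viewed as a function of $(q,p)$, is admissible, and (iii) upgrades this to control of the $t$-derivative.

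The support claim follows by induction on $k$. Every term in \eqref{eq:wtM0} and the first term of \eqref{eq:wtMk} carries the factor $\chi_{\delta}$, whose support lies in $K_{\delta/2}$. The remaining piece $\wt{M}_{n,k,\delta}^\perp$ in \eqref{eq:wtMkperp} is obtained from $\wt{M}_{n,k-s,\delta}$, $s\geq1$, by applying $\mc{L}_{n,s}$ and then multiplying on the left by $\mc{L}_{n,0}^{\dagger}$. Since $\mc{L}_{n,s}$ is built out of the differential operators $\mc{D}_{n,j}$, $\mc{G}_{n,j_1j_2}$ and the composites $\mc{T}^{\nu,k}_{n,\cdots}$, it is a differential operator in $(q,p)$ and cannot enlarge support, while left multiplication by $\mc{L}_{n,0}^{\dagger}$ preserves support. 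By the induction hypothesis $\supp\wt{M}_{n,k-s,\delta}\subset K_{\delta/2}$, whence $\supp\wt{M}_{n,k,\delta}\subset K_{\delta/2}$. As $\chi_{\delta}$ and the $\mc{L}_{n,s}$ are independent of $t$, differentiating in $t$ does not enlarge support, giving $\supp\partial_t\wt{M}_{n,k,\delta}\subset K_{\delta/2}$.

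The derivative bounds also go by induction on $k$. One first records that $\lambda_n$ in \eqref{eq:lambda} is admissible, being a fixed algebraic expression in $R_n$, $L_n$, $H_n$, $A_j$, their derivatives, and $Z_n^{-1}$, each evaluated at $(q,p)$ or along the flow, so (i)--(iii) make $\sup_{t}\Lambda_{s,\delta/2}[\lambda_n]$ finite for all $s$. For $k=0$, on $\{\chi_{\delta}\neq0\}\subset K_{\delta/2}$ the equation \eqref{eq:wtsigma_0} reduces to $\partial_t\wt{\sigma}_{n,0,\delta}+\wt{\sigma}_{n,0,\delta}\lambda_n=0$; differentiating this linear ODE in $(q,p)$ yields, for each multi-index $\alpha$, a linear ODE for $\partial^{\alpha}\wt{\sigma}_{n,0,\delta}$ whose inhomogeneity involves only lower-order derivatives of $\wt{\sigma}_{n,0,\delta}$ and the bounded derivatives of $\lambda_n$, so Gronwall's inequality and a secondary induction on $\abs{\alpha}$ bound all derivatives uniformly on $[0,T]$; the product rule in \eqref{eq:wtM0} and $\partial_t\wt{\sigma}_{n,0,\delta}=-\wt{\sigma}_{n,0,\delta}\lambda_n$ then give admissibility of $\wt{M}_{n,0,\delta}$. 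For $k\geq1$ one first bounds $\wt{M}_{n,k,\delta}^\perp$: each $\mc{L}_{n,s}\wt{M}_{n,k-s,\delta}$ with $s\geq1$ applies a differential operator with admissible coefficients ($A_j(Q_n)$, $P_{n,l}$, $\partial_z Q_n$, $Z_n^{-1}$ and their derivatives) to the admissible $\wt{M}_{n,k-s,\delta}$, hence is admissible, and left multiplication by $\mc{L}_{n,0}^{\dagger}$ keeps it so. Since $\wt{M}_{n,k,\delta}^\perp$ depends only on orders $k-s<k$, it is controlled before $\wt{\sigma}_{n,k,\delta}$ is determined, so no circularity arises; the source in \eqref{eq:wtsigma_k} is then admissible and the same Gronwall argument bounds $\wt{\sigma}_{n,k,\delta}$, while \eqref{eq:wtMk} assembles $\wt{M}_{n,k,\delta}$. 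The $t$-derivative bounds propagate identically, using the ODEs for $\partial_t\wt{\sigma}_{n,k,\delta}$ and fact (iii) for the $t$-derivatives of the flow-dependent coefficients.

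The main obstacle, and the point deserving most care, is the closedness of the operator class: that $\mc{D}_{n,j}$, $\mc{G}_{n,j_1j_2}$, the composites $\mc{T}^{\nu,k}_{n,\cdots}$, and hence each $\mc{L}_{n,s}$, map admissible functions to admissible functions with only a finite loss of derivatives. This rests on Lemma~\ref{lem:Zinv} and Proposition~\ref{prop:Jacobian} for the coefficients $Z_n^{-1}$ and $\partial_z Q_n$, and requires tracking the recursion \eqref{op:T1}--\eqref{op:Tnu} so that, at each fixed $k$, only finitely many derivatives are lost. Once this closedness is established, together with the Gronwall estimates for the amplitude ODEs and the observation that $\wt{M}_{n,k,\delta}^\perp$ depends only on strictly lower orders, the induction closes and both assertions of the lemma follow.
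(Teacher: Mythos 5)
Your proposal is correct and follows essentially the same route as the paper: induction on $k$, with support propagated through the $\chi_{\delta}$ factors and the locality of $\mc{D}_{n,j}$, $\mc{G}_{n,j_1j_2}$, and Gronwall plus a secondary induction on the multi-index for the amplitude ODEs, all resting on Propositions~\ref{prop:boundaway}, \ref{prop:Hbound}, \ref{prop:Jacobian} and Lemmas~\ref{lem:Zinv}, \ref{lem:L0} exactly as the paper does. Your only looseness is terminological --- coefficient functions like $\lambda_n$ or $R_n(Q_n,P_n)$ satisfy the $\Lambda_{s,\delta/2}$ bounds but not the support condition in your notion of admissibility, which does not affect the argument since the support claim is only needed (and proved) for the $\wt{M}_{n,k,\delta}$ themselves.
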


\begin{proof}

  The argument is the same for each branch, so we will omit the
  subscript $n$ in the proof.

  By Proposition~\ref{prop:boundaway}, there exists constant
  $\delta_T$ such that for $t \in [0, T]$, we have $\bigl(Q(t,q,p),
  P(t, q, p)\bigr) \in K_{\delta_T}$ for $(q, p) \in
  K_{\delta/2}$. From Lemma~\ref{lem:Zinv} and
  Proposition~\ref{prop:Hbound}, it is easy to conclude that for any
  $s \in \NN$ and $l = 0, 1$,
  \begin{equation}\label{eq:lambdabound}
    \sup_{t\in [0, T]}
    \Lambda_{s, \delta/2}[\partial_t^l \lambda{(t,q,p)}] \leq C_{s, \delta, T}.
  \end{equation}

  Equation \eqref{eq:wtM0} implies that
  \begin{equation*}
    \supp \wt{M}_{0, \delta}{(t,q,p)}
    \subset \supp \chi_{\delta}{(q,p)} \subset K_{\delta/2}.
  \end{equation*}
  Moreover, \eqref{eq:wtsigma_0} implies that
  \begin{equation*}
    \frac{\ud}{\ud t} \abs{\wt{\sigma}_{0, \delta}{(t,q,p)} \chi_{\delta}{(q,p)}}
    \leq \abs{\lambda(t, q, p)} \abs{\wt{\sigma}_{0, \delta}{(t,q,p)}
      \chi_{\delta}{(q,p)}}.
  \end{equation*}
  Hence Gronwall's inequality yields
  \begin{equation*}
    \sup_{t\in [0, T]} \sup_{(q,p) \in K_{\delta/2}}
    \abs{\wt{\sigma}_{0, \delta}(t, q, p) \chi_{\delta}(q, p)}
    \leq C_{\delta, T}.
  \end{equation*}
  Taking derivatives of \eqref{eq:wtsigma_0} with respect to $(t, q,
  p)$, we have for multi-index $\alpha$ and $l = 0, 1$,
  \begin{multline*}
    \frac{\ud}{\ud t} \partial^{\alpha}_{(q, p)} \partial_t^l
    \bigl(\wt{\sigma}_{0, \delta}{(t,q,p)} \chi_{\delta}{(q,p)}\bigr) \\
    = \sum_{\beta \leq \alpha} \sum_{0\leq m \leq l}
    {\alpha\choose\beta} \partial_{(q, p)}^{\beta} \partial_t^{m}
    \lambda(t, q, p)
    \partial_{(q, p)}^{\alpha - \beta} \partial_t^{l-m}
    \bigl(\wt{\sigma}_{0, \delta}{(t,q,p)} \chi_{\delta}{(q,p)}\bigr).
  \end{multline*}
  Using \eqref{eq:lambdabound} and by induction, we have for $l = 0,
  1$,
  \begin{equation*}
    \sup_{t \in [0, T]} \Lambda_{s, \delta/2}\bigl[\partial_t^{l}
    \bigl(\wt{\sigma}_{0, \delta}{(t,q,p)} \chi_{\delta}{(q,p)}\bigr)\bigr]
    \leq C_{s, \delta, T},
  \end{equation*}
  which implies
  \begin{equation}\label{eq:wtM0bound}
    \sup_{t \in [0, T]} \Lambda_{s, \delta/2}
    [\partial_t^l\wt{M}_{0, \delta}{(t,q,p)}]
    \leq C_{s, \delta, T}.
  \end{equation}

  Next, we consider $\wt{M}_{1, \delta}{(t,q,p)}$. \eqref{eq:wtMkperp} gives
  \begin{equation*}
    \wt{M}_{1, \delta}^{\perp}{(t,q,p)} = \mc{L}_{0}^{\dagger}
    {\bigl(Q_n(t,q,p),P_n(t,q,p)\bigr)}
    (\mc{L}_{1}\wt{M}_{0, \delta}){(t,q,p)}.
  \end{equation*}
  As $\supp \wt{M}_{0, \delta}{(t,q,p)} \subset K_{\delta/2}$, we have $\supp
  \wt{M}_{1, \delta}^{\perp}{(t,q,p)} \subset K_{\delta/2}$.  By
  Lemma~\ref{lem:L0}, \eqref{op:L1} and \eqref{eq:wtM0bound}, it is
  not hard to see that for $l = 0, 1$,
  \begin{equation*}
    \sup_{t \in [0, T]} \Lambda_{s, \delta/2}
    [\partial_t^l\wt{M}^{\perp}_{1, \delta}{(t,q,p)}]
    \leq C_{s, \delta, T}.
  \end{equation*}
  Similarly, we get from \eqref{eq:wtsigma_k} the estimate
  \begin{equation*}
    \sup_{t \in [0, T]} \Lambda_{s, \delta/2}[\partial_t^{l}
    \wt{\sigma}_{1, \delta}{(t,q,p)} \chi_{\delta}(q,p)]
    \leq C_{s, \delta, T}.
  \end{equation*}
  In summary, we obtain $\supp \wt{M}_{1, \delta}{(t,q,p)} \subset
  K_{\delta/2}$, and for $l = 0, 1$,
  \begin{equation*}
    \sup_{t \in [0, T]} \Lambda_{s, \delta/2}
    [\partial_t^l\wt{M}_{1, \delta}{(t,q,p)}]
    \leq C_{s, \delta, T}.
  \end{equation*}

  Continuing this procedure, we obtain the estimates for higher order
  asymptotic terms: $\supp \wt{M}_{k, \delta}{(t,q,p)} \subset \supp
  K_{\delta/2}$ for $k \geq 0$, and
  \begin{align*}
    & \sup_{t \in [0, T]} \Lambda_{s, \delta/2} [\wt{M}_{k, \delta}{(t,q,p)}]
    \leq C_{k, s, \delta, T}, \\
    & \sup_{t \in [0, T]} \Lambda_{s, \delta/2} [\partial_t \wt{M}_{k,
      \delta}{(t,q,p)}] \leq C_{k, s, \delta, T}.
  \end{align*}
  This proves the Lemma.

\end{proof}

We now show that the filtered frozen Gaussian approximation gives a
high order approximation to the solution.
\begin{prop}\label{prop:highorder}
  Under the same assumption of Theorem~\ref{thm:main}, for any $T>0$,
  $K\in \NN$, there exists constant $C_{T,K}$, so that for any
  $\veps>0$,
  \begin{equation*}
    \norm{\bigl(\partial_t+ A_l(x)\partial_{x_l}\bigr)
      \wt{\mc{P}}_{t, K, \delta}^{\veps} u_0^{\veps}}_{L^2(\RR^d)}
    \leq  C_{T, K} M \veps^{K-1}.
  \end{equation*}
\end{prop}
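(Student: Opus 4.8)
The plan is to apply the operator $\partial_t + A_l(x)\partial_{x_l}$ directly to the oscillatory integral defining $\wt{\mc{P}}^{\veps}_{t,K,\delta}u_0^{\veps}$ and to show that, after reorganizing in powers of $\veps$, all contributions of order $\veps^{-1}$ through $\veps^{K-2}$ cancel identically, leaving a residual of size $\Or(\veps^{K-1})$. Differentiating under the integral sign (legitimate by the support and smoothness bounds of \lemref{lem:wtMbound} and the Gaussian decay of the phase in $y$), the derivatives fall on $e^{\I\Phi_n/\veps}$ and on $\wt{\mc{M}}^{\veps}_{n,K,t,\delta}$, producing the integrand
\begin{equation*}
  e^{\I\Phi_n/\veps}\Bigl[\tfrac{\I}{\veps}\bigl(\partial_t\Phi_n + A_l(x)\partial_{x_l}\Phi_n\bigr)\wt{\mc{M}}^{\veps}_{n,K,t,\delta} + \partial_t\wt{\mc{M}}^{\veps}_{n,K,t,\delta}\Bigr]u_0^{\veps}(y).
\end{equation*}
Using $\partial_{x_l}\Phi_n = P_{n,l} + \I(x-Q_n)_l$ together with the identity $\partial_t S_n - P_n\cdot\partial_t Q_n = -H_n(Q_n,P_n)$ from \eqref{eq:S}, the $x$-independent part of the bracket reproduces the multiplication operator $\mc{L}_{n,0}$ of \eqref{op:L0}; the remaining pieces carry explicit factors of $A_l(x)$ and of $(x-Q_n)$.

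First I would Taylor expand each $A_l(x)$ about $x = Q_n(t,q,p)$ to order $2K$, with integral remainder, so that every term of the expansion is a product of a monomial $(x-Q_n)_{j_1}\cdots(x-Q_n)_{j_{\nu}}$ with a symbol supported in $K_{\delta/2}$ (coming from the $\chi_{\delta}$ inside $\wt{\mc{M}}^{\veps}_{n,K,t,\delta}$) times derivatives of $A_l$ evaluated at $Q_n$. I would then invoke \lemref{lem:veps1} to replace, under the $\sim$ equivalence, each such monomial factor by the operator $\mc{T}^{\nu,\veps}_{n,j_1\cdots j_{\nu}}$ acting on the accompanying symbol. Matching the resulting coefficients against \eqref{op:L1} and the formula for $\mc{L}_{n,k}$, and regrouping $\mc{T}^{\nu,\veps}_{n,j_1\cdots j_{\nu}} = \sum_{k}\veps^k\mc{T}^{\nu,k}_{n,j_1\cdots j_{\nu}}$ in powers of $\veps$, one finds the residual is equivalent to $\sum_{n}\mc{I}_n^{\veps}\bigl(t,\veps^{-1}\sum_{k\geq0}\veps^k\mc{L}_{n,k}\wt{\mc{M}}^{\veps}_{n,K,t,\delta}\bigr)u_0^{\veps}$. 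Since $\wt{\mc{M}}^{\veps}_{n,K,t,\delta} = \sum_{j=0}^{K-1}\veps^j\wt{M}_{n,j,\delta}$, the coefficient of $\veps^{m-1}$ in the effective symbol is $\sum_{k+j=m}\mc{L}_{n,k}\wt{M}_{n,j,\delta}$.

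The heart of the argument is to show this coefficient vanishes for $m = 0, 1, \dots, K-1$, which is exactly what the recursive definitions \eqref{eq:wtM0}--\eqref{eq:wtsigma_k} are engineered to guarantee. For $m=0$ the eigenrelation \eqref{eigen:right} gives $\mc{L}_{n,0}\wt{M}_{n,0,\delta}=0$ because $\wt{M}_{n,0,\delta}$ is proportional to $R_n(Q_n,P_n)$. For $m\geq1$ I would split the order-$m$ coefficient according to the spectral decomposition of the matrix defining $\mc{L}_{n,0}$: its component in the range of $\mc{L}_{n,0}$ (the subspace $\{Y:L_n^{\TT}Y=0\}$) is annihilated precisely by the choice of $\wt{M}_{n,m,\delta}^{\perp}$ through the partial inverse $\mc{L}_{n,0}^{\dagger}$ of \eqref{eq:L0dagger} and \eqref{eq:wtMkperp} (using that $\mc{L}_{n,0}\mc{L}_{n,0}^{\dagger} = \Id - R_n L_n^{\TT}$ is the projection onto that range), while its component along $R_n$ is the Fredholm solvability condition $L_n^{\TT}\sum_{s\geq1}\mc{L}_{n,s}\wt{M}_{n,m-s,\delta}=0$, which is exactly the transport ODE \eqref{eq:wtsigma_k} for $\wt{\sigma}_{n,m-1,\delta}$ (the function $\lambda_n$ of \eqref{eq:lambda} being the contraction of $\mc{L}_{n,1}$ with $L_n^{\TT}\cdots R_n$). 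Carrying this induction on $m$ up to $K-1$ cancels all orders $\veps^{-1},\dots,\veps^{K-2}$.

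The surviving terms are of order $\veps^{K-1}$ and higher, so I would write the residual as $\veps^{K-1}\sum_n\mc{I}_n^{\veps}(t,\mc{R}_n^{\veps})u_0^{\veps}$, where the symbols $\mc{R}_n^{\veps}$ are uniformly bounded in $L^{\infty}(\RR^{2d})$ by combining the bounds on $\wt{M}_{n,k,\delta}$ and their derivatives (\lemref{lem:wtMbound}), on $(Z^{\kappa_{n,t}})^{-1}$ (\lemref{lem:Zinv}), on $A_l, H_n, R_n, L_n$ (Proposition~\ref{prop:Hbound}), and the fact that the flow stays in a fixed compact set (Proposition~\ref{prop:boundaway}); Proposition~\ref{prop:L2bound} then gives $\norm{\mc{I}_n^{\veps}(t,\mc{R}_n^{\veps})u_0^{\veps}}_{L^2}\leq 2^{-d/2}\norm{\mc{R}_n^{\veps}}_{L^{\infty}}\norm{u_0^{\veps}}_{L^2}$, and summing over the $N$ branches and taking $\sup_{t\in[0,T]}$ yields $C_{T,K}M\veps^{K-1}$ (the identity of \lemref{lem:veps1} is first used for $u_0^{\veps}\in\mc{S}(\RR^d)$, where $\wt{\mc{M}}^{\veps}_{n,K,t,\delta}\,u_0^{\veps}$ meets its hypotheses, and extended to $L^2$ by density). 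I expect the main obstacle to be twofold: the bookkeeping in the middle step, namely verifying that the Taylor-expanded, integrated-by-parts integrand reassembles \emph{exactly} into the hierarchy $\veps^{-1}\sum_k\veps^k\mc{L}_{n,k}$ (with the correct pairing of the $\mc{T}^{\nu,k}$ pieces against $\partial^{\nu-1}A_{j_\nu}$ and $P_{n,l}\partial^{\nu}A_l$) so that the cancellation is clean order by order; and controlling the Taylor remainder, which is not a monomial and cannot be handled by \lemref{lem:veps1} directly, for which I would apply the identity $\I\partial_z\Phi_n = Z_n(x-Q_n)$ of \eqref{eq:dzPhi} repeatedly, each application gaining a power of $\veps$ while only differentiating smooth, uniformly bounded coefficients, so that the remainder contributes at order $\veps^{K}$ and is absorbed into the $\Or(\veps^{K-1})$ estimate.
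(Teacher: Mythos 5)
Your proposal follows essentially the same route as the paper's proof: Taylor expansion of $A_l(x)$ about $Q_n$ to order $2K$ with integral remainder, conversion of the $(x-Q_n)$ monomials into the operators $\mc{T}^{\nu,\veps}_{n,j_1\cdots j_\nu}$ via Lemma~\ref{lem:veps1}, order-by-order cancellation $\sum_{s=0}^{k}\mc{L}_{n,s}\wt{M}_{n,k-s,\delta}=0$ enforced by the eigenrelation, the partial inverse $\mc{L}_{n,0}^{\dagger}$, and the transport equations \eqref{eq:wtsigma_0}, \eqref{eq:wtsigma_k}, followed by an $L^\infty$ bound on the residual symbol through Lemma~\ref{lem:wtMbound} and Proposition~\ref{prop:L2bound}. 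Even your treatment of the non-monomial Taylor remainder by repeated use of \eqref{eq:dzPhi} mirrors the paper, which absorbs those terms into the modified operators $\wt{\mc{L}}_{n,k}$ for $K+1\leq k\leq 2K+2$ via the same $\mc{T}^{2K+1,k}$ and $\mc{T}^{2K+2,k}$ contributions.
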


\begin{proof}

  Without further indication, $S_n$, $Q_n$ and $P_n$ are
  all evaluated at $(t,q,p)$ in the proof.

  For each $l = 1, \cdots, d$, Taylor expansion of $A_l(x)$ around
  $x=Q_n$ gives
  \begin{multline}\label{eq:Ataylor}
    A_l(x) = A_l(Q_n) + \sum_{\nu=1}^{2K} \frac{1}{\nu
      !}(x-Q_n)_{j_1}\cdots (x-Q_n)_{j_{\nu}}
    \partial^{\nu}_{{j_1}\cdots
      {j_{\nu}}}A_l(Q_n)\\+ \frac{1}{(2K)!}(x-Q_n)_{j_1}\cdots
    (x-Q_n)_{j_{2K+1}}\widehat{A}_{l,j_1\cdots j_{2K+1}}(Q_n),
  \end{multline}
  where $\widehat{A}_{l,j_1\cdots j_{2K+1}}(Q_n)$ is the integral
  \begin{equation}
    \widehat{A}_{l,j_1\cdots j_{2K+1}}(Q_n)=\int_0^1(1-\tau)^{2K}
    \partial^{2K+1}_{j_1\cdots j_{2K+1}}A_l\bigl(Q_n+\tau(x-Q_n)\bigr)
    \ud \tau.
  \end{equation}
  We define the operators $\wt{\mc{L}}_{n,k}$ on $C^1([0, T],
  C^{\infty}(\RR^{2d}; \CC^{N\times N}))$ as follows. For $M:
  \RR\times \RR^{2d} \to \CC^{N\times N}$, we define for $0\leq k\leq
  K$,
  \begin{align*}
    (\wt{\mc{L}}_{n,k}M){(t,q,p)}&=(\mc{L}_{n,k}M){(t,q,p)}, \\
    \intertext{for $K+1\leq k\leq 2K+2$,}
    (\wt{\mc{L}}_{n,k}M){(t,q,p)} & =
    \sum_{\nu=k}^{2K}\mc{T}_{n,j_1\cdots j_{\nu}}^{\nu,k}
    \biggl(\Bigl( -\frac{1}{(\nu-1)!}\partial^{\nu-1}_{{j_1}\cdots
      {j_{\nu-1}}}A_{j_\nu}(Q_n)\\
    & \hspace{8em}+\frac{\I}{\nu!}
    P_l\partial^{\nu}_{{j_1}\cdots {j_{\nu}}}A_l(Q_n)\Bigr) M{(t,q,p)}\biggr) \\
    & +\mc{T}_{n,j_1\cdots j_{2K+1}}^{2K+1,k} \biggl(\Bigl(
    -\frac{1}{(2K)!}\partial^{2K}_{{j_1}\cdots
      {j_{2K}}}A_{j_{2K+1}}(Q_n)\\
    & \hspace{8em} +\frac{\I}{(2K)!}  P_l\widehat{A}_{l,j_1\cdots
      j_{2K+1}}(Q_n)\Bigr)
    M{(t,q,p)}\biggr) \\
    & +\mc{T}_{n,j_1\cdots j_{2K+2}}^{2K+2,k} \biggl( -\frac{1}{(2K)!}
    \widehat{A}_{j_{2K+2},j_1\cdots j_{2K+1}}(Q_n) M{(t,q,p)}\biggr), \\
    \intertext{and for $k\geq 2K+3$,}
    (\wt{\mc{L}}_{n,k}M){(t,q,p)} & = 0.
  \end{align*}
  Differentiating $\Phi_n{(t,x,y,q,p)}$ with respect to $t$ and $x$ gives
  \begin{align*}
    & \partial_t \Phi_n{(t,x,y,q,p)} = \partial_t S_n - P_n \cdot \partial_t Q_n
    + (x - Q_n) \cdot (\partial_t P_n - \I \partial_t Q_n), \\
    & \partial_{x} \Phi_n{(t,x,y,q,p)} = \I (x - Q_n) + P_n.
  \end{align*}

  Combining these expressions, substituting \eqref{eq:ansatzdelta} and
  \eqref{eq:Ataylor} into \eqref{eq:hypersys}, and organizing terms in
  orders of $\veps$ produce, after straightforward calculations,
  \begin{equation*}
    (\partial_t + A_l(x) \partial_{x_l}) (\wt{\mc{P}}_{t, K, \delta}^{\veps}
    u_0^{\veps}){(x)}
    = \sum_{k=0}^{K-1} \veps^{k-1} v_{t, k, \delta}{(x)} + \veps^{K-1}
    r_{t, K, \delta}{(x)},
  \end{equation*}
  where we have, 
  \begin{align*}
    & v_{t, 0, \delta}{(x)} = \sum_{n=1}^N (2\pi \veps)^{-3d/2}
    \int_{\RR^{3d}} (\wt{\mc{L}}_{n,0} \wt{M}_{n, 0,\delta} ){(t,q,p)}
    e^{\I
      \Phi_n/\veps} u_0^\veps(y) \ud y \ud p \ud q, \\
    & v_{t, 1, \delta}{(x)} = \sum_{n=1}^N (2\pi \veps)^{-3d/2}
    \int_{\RR^{3d}} \bigl((\wt{\mc{L}}_{n, 0}\wt{M}_{n, 1,\delta}){(t,q,p)} \\
    & \nn \phantom{v_{t, 1, \delta} = \sum_{n=1}^N (2\pi
      \veps)^{-3d/2} \int_{\RR^{3d}} } +(\wt{\mc{L}}_{n, 1}\wt{M}_{n,
      0,\delta}){(t,q,p)}
    \bigr) e^{i\Phi_n/\veps} u_0^\veps(y)\ud y\ud p\ud q, \\
    & v_{t, k, \delta}{(x)} = \sum_{n=1}^N (2\pi \veps)^{-3d/2}
    \int_{\RR^{3d}} \sum_{s=0}^k (\wt{\mc{L}}_{n, s}\wt{M}_{n,
      k - s,\delta}){(t,q,p)} e^{i\Phi_n/\veps} u_0^\veps(y)\ud y\ud p\ud q, \\
    \intertext{and the remainder,} & r_{t,K,\delta}{(x)} =
    \sum_{k=0}^{2K+1} \veps^{k}
    \sum_{s=0}^{K-1} \sum_{n=1}^N (2\pi \veps)^{-3d/2} \\
    & \nn \hspace{6em} \times \int_{\RR^{3d}}
    (\wt{\mc{L}}_{n,k+s+1}\wt{M}_{n,K-s-1,\delta}){(t,q,p)}e^{\I\Phi_n/\veps}
    u_{0}^\veps(y) \ud y \ud p \ud q.
  \end{align*}
  Here $\Phi_n$ is evaluated at $(t, x, y, q, p)$.

  We will show that $v_{t, k, \delta} = 0$ for $k = 0, \cdots, K-1$ and
  control the norm of the remainder $r_{t, K, \delta}$ to prove the
  proposition.

  By definitions of $R_n(Q_n, P_n)$ as in \eqref{eigen:right} and of
  $S_n$ as in \eqref{eq:S}, we get
  \begin{multline*}
    \Bigl( \partial_t S_n
    - P_n\cdot \partial_t Q_n + P_{n,l} A_l(Q_n) \Bigr) R_n(Q_n, P_n) \\
    = \Bigl(\partial_t S_n - P_n\cdot \partial_t Q_n + H_n(Q_n,
    P_n)\Bigr) R_n(Q_n, P_n) = 0,
  \end{multline*}
  which implies $(\wt{\mc{L}}_{n, 0} \wt{M}_{n,0,\delta}){(t,q,p)} =
  0$ for each $n$, since $\wt{M}_{n, 0, \delta}$ is given as
  \eqref{eq:wtM0}.  Therefore, $v_{t, 0, \delta} = 0$.

  To prove $v_{t, 1, \delta} = 0$, it suffices to show that, for each
  $m=1,\cdots,N$,
  \begin{equation}\label{eq:v1}
    L_m^\TT(Q_n,P_n)\biggl((\wt{\mc{L}}_{n, 0}\wt{M}_{n, 1,\delta}){(t,q,p)}+
    (\wt{\mc{L}}_{n, 1}\wt{M}_{n, 0,\delta}){(t,q,p)}\biggr)=0.
  \end{equation}

  When $m\neq n$, \eqref{eq:v1} is equivalent to, by \eqref{eigen:right},
  \begin{equation}
    L_m^\TT(Q_n,P_n)\biggl( (\wt{\mc{L}}_{n, 0}
    \wt{M}_{n, 1,\delta}^{\perp}){(t,q,p)}+
    (\wt{\mc{L}}_{n, 1}\wt{M}_{n, 0,\delta}){(t,q,p)}\biggr)=0,
  \end{equation}
  which is valid by \eqref{eq:L0dagger} and \eqref{eq:wtMkperp}.

  When $m=n$,  \eqref{eq:v1} is equivalent to, by \eqref{eigen:left},
  \begin{equation}\label{eq:L1}
    L_n^\TT(Q_n,P_n) (\wt{\mc{L}}_{n, 1}\wt{M}_{n, 0,\delta}){(t,q,p)}=0,
  \end{equation}
  which will be proved as follows. For convenience, we will omit the
  index $n$ in the following calculations. Substituting
  \eqref{eq:wtM0} in \eqref{eq:L1} and using \eqref{op:T_j},
  \eqref{op:T_j1j2} and \eqref{op:L1}, we can rewrite \eqref{eq:L1} as
  \begin{equation}\label{eq:1st}
    \begin{aligned}
      {L}^{\TT}({Q}, {P}) \biggl( & \partial_t
      \bigl(\wt{\sigma}_{0,\delta}(t,q,p)
      R(Q,P)\bigr) L^{\TT}({q}, {p})\chi_{\delta}(q, p) \\
      & - \partial_{z_k} \Bigl( \bigl( \I \partial_t P_j + \partial_t
      Q_j - A_j(Q) + \I P_l {\partial_{j}} A_l(Q) \bigr)\\ &
      \hspace{4em}\times \wt{\sigma}_{0,\delta}(t,q,p) R(Q,P)
      Z_{jk}^{-1} L^{\TT}({q}, {p})\chi_{\delta}(q, p)
      \Bigr) \\
      & + \partial_{z_s} Q_j \bigl( - {\partial_{j}} A_k(Q) +
      \frac{\I}{2} P_l {\partial^2_{jk}} A_l(Q) \bigr) \\ &
      \hspace{4em} \times\wt{\sigma}_{0,\delta}(t,q,p) R(Q,P)
      Z_{ks}^{-1} L^{\TT}({q}, {p})\chi_{\delta}(q, p) \biggr) =
      0.
    \end{aligned}
  \end{equation}
  For the first term, easy calculations yield
  \begin{equation*}
  \begin{aligned}
    {L}^{\TT}({Q}, {P}) &\Bigl(\partial_t
    \bigl(\wt{\sigma}_{0,\delta}(t,q,p) R(Q,P)\bigr) L^{\TT}({q},
    {p})\chi_{\delta}(q, p)\Bigr) \\ & = \biggl(\partial_t
    \wt{\sigma}_{0,\delta}{(t,q,p)} +
    \wt{\sigma}_{0,\delta}{(t,q,p)} {L}^{\TT}(Q,P) \Bigl(\partial_{{P}} H(Q,P)
    \cdot \partial_{{Q}} {R}(Q,P) \\&\hspace{6em} - \partial_{{Q}} H(Q,P) \cdot
    \partial_{{P}} {R}(Q,P)\Bigr)\biggr)L^{\TT}(q, p)\chi_{\delta}(q,
    p).
  \end{aligned}
  \end{equation*}
  To simplify the second term in \eqref{eq:1st}, we observe that
  differentiating \eqref{eigen:right} with respect to $P$ and $Q$
  yields,
  \begin{equation}
    \begin{aligned}
      & A_l({Q}) {R}({Q}, {P}) + P_j
      A_j({Q}) \partial_{P_l} {R}({Q}, {P}) \\
      & \hspace{6em} = \partial_{P_l} H({Q}, {P}) {R}({Q}, {P}) +
      H({Q}, {P}) \partial_{P_l} {R}({Q}, {P}), \nn
    \end{aligned}
  \end{equation}
  and
  \begin{equation}
    \begin{aligned}
      & P_j \partial_{l} A_j({Q}) {R}({Q}, {P}) + P_j
      A_j({Q}) \partial_{Q_l}
      {R}({Q}, {P}) \\
      & \hspace{6em} = \partial_{Q_l} H({Q}, {P}) {R}({Q}, {P}) +
      H({Q}, {P}) \partial_{Q_l} {R}({Q}, {P}). \nn
    \end{aligned}
  \end{equation}
  Taking inner product with ${L}({Q}, {P})$ on the left produces
  \begin{align}
    \label{eq:derivP} & {L}^{\TT}({Q}, {P}) \bigl( A_l({Q})
    - \partial_{P_l} H({Q}, {P}) \bigr) {R}({Q}, {P}) = 0, \\
    \label{eq:derivQ} & {L}^{\TT}({Q}, {P}) \bigl( P_j \partial_{l}
    A_j({Q}) - \partial_{Q_l} H({Q}, {P}) \bigr) {R}({Q}, {P}) = 0.
  \end{align}
  Define the short hand notation
  \begin{equation*}
    \begin{aligned}
      {F}_j{(t,q,p)} &= (\I \partial_t P_j + \partial_t Q_j) {R}(Q,P)
      - A_j(Q) {R}(Q,P) + \I P_l \partial_{j} A_l(Q) {R}(Q,P) \\
      & = - \Bigl( \bigl(A_j(Q) - \partial_{P_j} H(Q,P)\bigr)
   + \I \bigl( \partial_{Q_j} H(Q,P) -
      P_l \partial_{j} A_l(Q) \bigr) \Bigr) {R}(Q,P).
    \end{aligned}
  \end{equation*}
  Using \eqref{eq:derivP} and \eqref{eq:derivQ}, it is clear that for
  any $j = 1, \ldots, d$,
  \begin{equation*}
    {L}^{\TT}({Q}, {P}) {F}_j{(t,q,p)} = 0.
  \end{equation*}
  Hence,
  \begin{align*}
    {L}^{\TT}({Q}, {P}) &\partial_{z_k} \bigl((\wt{\sigma}_{0,\delta}
    {F}_j Z_{jk}^{-1}){(t,q,p)} L^\TT(q,p) \chi_{\delta}(q, p)\bigr)\\
    &= \wt{\sigma}_{0,\delta} {(t,q,p)}{L}^{\TT}({Q}, {P})
    (\partial_{z_k} {F}_j Z_{jk}^{-1}){(t,q,p)} L^\TT({q}, {p})
    \chi_{\delta}(q, p) \\
    & = - \wt{\sigma}_{0,\delta}{(t,q,p)} (
    \partial_{z_k} {L})^{\TT}({Q}, {P}) ({F}_j Z_{jk}^{-1}){(t,q,p)}
    L^\TT({q},{p})\chi_{\delta}(q, p).
  \end{align*}
  Therefore, \eqref{eq:1st} is implied by
  \begin{equation*}
    \begin{aligned}
      \partial_t \wt{\sigma}_{0,\delta}{(t,q,p)}& \chi_{\delta}(q, p)
      + \wt{\sigma}_{0,\delta}{(t,q,p)} {L}^{\TT} (\partial_{{P}} H
      \cdot \partial_{{Q}} {R} - \partial_{{Q}} H \cdot \partial_{{P}}
      {R})\chi_{\delta}(q, p) \\
      & + \wt{\sigma}_{0,\delta}{(t,q,p)} (\partial_{z_k} {L})^{\TT}
      \bigl({F}_j Z_{jk}^{-1}\bigr){(t,q,p)}\chi_{\delta}(q, p) \\
      & + \wt{\sigma}_{0,\delta}{(t,q,p)} \partial_{z_s} Q_j
      Z_{ks}^{-1}{(t,q,p)} \\
      & \hspace{6em} \times{L}^{\TT} \bigl( -
      \partial_{j} A_k(Q) + \frac{\I}{2} P_l \partial^2_{jk}
      A_l(Q) \bigr) \chi_{\delta}(q, p) = 0,
    \end{aligned}
  \end{equation*}
  which is valid by \eqref{eq:wtsigma_0} and \eqref{eq:lambda}.  Here
  in the last equation, $H$, $L$ and $R$ are evaluated at $(Q,P)$.

  Similarly, we can show that $v_{t, k, \delta}{(x)} = 0$ for higher
  orders, by using \eqref{eq:wtMk}-\eqref{eq:wtsigma_k}.

  Therefore, to prove the Lemma, it remains to bound the remainder
  $r_{t, K, \delta}{(x)}$. Using Proposition~\ref{prop:L2bound}, it
  suffices to control the $L^{\infty}$ norm of $(\wt{L}_{n,
    k}\wt{M}_{n, s, \delta}){(t,q,p)}$ for $k = 1, \cdots, 2K + 2$ and $s = 0,
  \cdots, K-1$.  This in turn follows from Lemma~\ref{lem:wtMbound}
  and the definition of $\wt{L}_{n,k}$.
\end{proof}

To relate $\wt{\mc{P}}_{t, K, \delta}^{\veps}$ with $\mc{P}_{t, K,
  \delta}^{\veps}$ defined in \eqref{eq:ansatz}, the following lemma
shows that they are essentially the same when applying on
asymptotically high frequency initial data.
\begin{lemma}\label{lem:ptilde2}
  For any $T > 0$, $K\in \NN$,
  \begin{equation*}
    \sup_{0 \leq t \leq T} \norm{\mc{P}_{t, K, \delta}^{\veps} u_0^{\veps}
      -  \wt{\mc{P}}_{t, K, \delta}^{\veps} u_0^{\veps}}_{L^2(\RR^d; \CC^N)}
    = \Or(\veps^{\infty}).
  \end{equation*}
\end{lemma}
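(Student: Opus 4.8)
The plan is to exploit the fact that the only difference between $\mc{P}^{\veps}_{t,K,\delta}$ and $\wt{\mc{P}}^{\veps}_{t,K,\delta}$ lies in how the cutoff $\chi_{\delta}$ enters the construction of the symbols, and that this difference is confined to the transition region $K_{\delta/2}\setminus K_{\delta}$, on which the FBI transform of an asymptotically high frequency initial datum is negligible. Writing the difference as a sum of Fourier integral operators,
\[
\mc{P}^{\veps}_{t,K,\delta}u_0^{\veps} - \wt{\mc{P}}^{\veps}_{t,K,\delta}u_0^{\veps} = \sum_{n=1}^N \mc{I}^{\veps}_n\bigl(t, D_n^{\veps}\bigr)u_0^{\veps}, \qquad D_n^{\veps} = \mc{M}^{\veps}_{n,K,t}\chi_{\delta} - \wt{\mc{M}}^{\veps}_{n,K,t,\delta},
\]
I would first show that the difference symbol $D_n^{\veps}$ is supported in $K_{\delta/2}\setminus\mathrm{int}(K_{\delta})$ and is bounded in $L^{\infty}$ uniformly in $t\in[0,T]$ and $\veps\in(0,1]$.

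The key structural observation is that $M_{n,k}$ and $\wt{M}_{n,k,\delta}$ agree on the interior of $K_{\delta}$. On $K_{\delta}$ we have $\chi_{\delta}\equiv 1$, so $\partial^{\alpha}\chi_{\delta}\equiv 0$ there; since the operators $\mc{D}_{n,j}$, $\mc{G}_{n,j_1j_2}$, $\mc{T}^{\nu,k}_{n}$, and hence $\mc{L}_{n,s}$, are local (finite-order) differential operators in $(q,p)$, the Leibniz rule shows that at any interior point of $K_{\delta}$ applying any $\mc{L}_{n,s}$ to $f\chi_{\delta}$ returns $\mc{L}_{n,s}f$. Consequently the defining relations for $\wt{\sigma}_{n,k,\delta}$ and $\wt{M}^{\perp}_{n,k,\delta}$ in \eqref{eq:wtsigma_0}, \eqref{eq:wtMkperp}, \eqref{eq:wtsigma_k} reduce on $\mathrm{int}(K_{\delta})$ to exactly the unfiltered relations \eqref{eq:sigma}, \eqref{eq:a_sperp}, \eqref{eq:sigma_s} for $\sigma_{n,k}$ and $M^{\perp}_{n,k}$, with identical initial data. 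An induction on $k$ then gives $\wt{M}_{n,k,\delta}=M_{n,k}=M_{n,k}\chi_{\delta}$ on $\mathrm{int}(K_{\delta})$ for every $k$. Since both symbols are supported in $K_{\delta/2}$, this forces $\supp D_n^{\veps}\subset K_{\delta/2}\setminus\mathrm{int}(K_{\delta})$; boundedness of $D_n^{\veps}$ on $K_{\delta/2}$ follows from \lemref{lem:wtMbound} together with the analogous bound for $M_{n,k}$ (a Gronwall estimate on \eqref{eq:sigma} and \eqref{eq:sigma_s} using Proposition~\ref{prop:Hbound} and \lemref{lem:Zinv} on the compact set $K_{\delta/2}$).

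With the support and bound in hand, I would estimate the $L^2$ norm using the pairing identity established in the proof of Proposition~\ref{prop:L2bound}: for any $v\in L^2(\RR^d;\CC^N)$,
\[
\average{v, \mc{I}^{\veps}_n(t,D_n^{\veps})u_0^{\veps}} = 2^{-d/2}\int_{\RR^{2d}} e^{\I S_n/\veps}\,\overline{(\ms{F}^{\veps}v)}(Q_n,P_n)\,D_n^{\veps}(q,p)\,(\ms{F}^{\veps}u_0^{\veps})(q,p)\ud q\ud p.
\]
Because $D_n^{\veps}$ is supported in $K_{\delta/2}\setminus\mathrm{int}(K_{\delta})\subset \RR^{2d}\setminus K_{\delta}$ (up to a null set), Cauchy--Schwarz, the change of variables $\kappa_{n,t}$ (symplectic, hence measure-preserving) and the isometry of Proposition~\ref{prop:isometry} give
\[
\norm{\mc{I}^{\veps}_n(t,D_n^{\veps})u_0^{\veps}}_{L^2} \leq 2^{-d/2}\norm{D_n^{\veps}}_{L^{\infty}}\Bigl(\int_{\RR^{2d}\setminus K_{\delta}}\abs{(\ms{F}^{\veps}u_0^{\veps})(q,p)}^2\ud q\ud p\Bigr)^{1/2}.
\]
By Definition~\ref{def:highfreq} the integral is $\Or(\veps^{\infty})$, and $\norm{D_n^{\veps}}_{L^{\infty}}$ is bounded uniformly in $t$ and $\veps$; summing over $n$ and taking the supremum over $t\in[0,T]$ yields the claim.

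The main obstacle is the first step: justifying carefully that the filtered and unfiltered symbols coincide on $\mathrm{int}(K_{\delta})$. This is where one must track the cutoff through the entire recursive hierarchy \eqref{eq:wtM0}--\eqref{eq:wtsigma_k} and verify that the locality of $\mc{D}$, $\mc{G}$, $\mc{T}$ and $\mc{L}_{n,s}$, together with $\chi_{\delta}\equiv 1$ and $\partial^{\alpha}\chi_{\delta}\equiv 0$ on $\mathrm{int}(K_{\delta})$, makes the two hierarchies propagate identically in $t$. Once this is secured, the remaining estimate is a routine application of the high frequency hypothesis to the transition region.
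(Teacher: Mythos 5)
Your proposal is correct and follows essentially the same route as the paper: decompose the difference as $\sum_n \mc{I}^{\veps}_n(t, D_n^{\veps})u_0^{\veps}$, show the difference symbol is bounded and supported in $K_{\delta/2}\setminus K_{\delta}$, and conclude via the pairing identity from Proposition~\ref{prop:L2bound} together with the $\Or(\veps^{\infty})$ decay of $\ms{F}^{\veps}u_0^{\veps}$ off $K_{\delta}$. The only (harmless) deviations are that you spell out by induction the symbol agreement that the paper asserts with ``it is easy to see'' in \eqref{eq:MwtM}, and you bound $M_{n,k}$ by a direct Gronwall argument where the paper instead invokes the identity $M_{n,k}=\wt{M}_{n,k,\delta/2}$ on $K_{\delta/2}$ to borrow Lemma~\ref{lem:wtMbound} at scale $\delta/2$.
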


\begin{proof}
  By definition \eqref{eq:ansatz} and \eqref{eq:ansatzdelta},
  \begin{multline*}
    (\mc{P}_{t,K, \delta}^{\veps} u_0^{\veps} - \wt{\mc{P}}_{t, K,
      \delta}^{\veps} u_0^{\veps} )(x) = \frac{1}{(2\pi \veps)^{3d/2}}
    \sum_{n=1}^N \sum_{k=0}^{K-1}
    \int_{\RR^{3d}} e^{\I\Phi_n/\veps} \\
    \times \veps^k\Bigl(M_{n,k}(t, q, p) \chi_{\delta}(q, p) - \wt{M}_{n, k,
      \delta}(t, q, p)\Bigr) u_{0}(y) \ud q \ud p \ud y.
  \end{multline*}
  From the constructions of $M_{n, k}$ and $\wt{M}_{n, k, \delta}$, it
  is easy to see that for $t \in [0, T]$ and $(q, p) \in K_{\delta}$.
  \begin{equation}\label{eq:MwtM}
    M_{n, k}(t, q, p) = \wt{M}_{n, k, \delta}(t, q, p).
  \end{equation}
  As $\chi_{\delta}(q, p) = 1$ for $(q, p) \in K_{\delta}$, we have
  \begin{equation*}
    M_{n, k}(t, q, p) \chi_{\delta}(q, p) = \wt{M}_{n, k, \delta}(t, q, p).
  \end{equation*}
  Using \eqref{eq:MwtM} with $\delta/2$, we have then
  \begin{equation}\label{eq:MwtM2}
    M_{n, k}(t, q, p) = \wt{M}_{n, k, \delta/2}(t, q, p)
  \end{equation}
  for $(q, p) \in K_{\delta/2}$, and hence in particular, for $(q, p)
  \in \supp \chi_{\delta}$. Combining \eqref{eq:MwtM2} with
  Lemma~\ref{lem:wtMbound} gives
  \begin{equation}\label{eq:boundM}
    \sup_{t \in [0, T]} \sup_{(q, p) \in \RR^{2d}}
    \abs{M_{n, k}(t, q, p) \chi_{\delta}(q, p)} \leq C_T.
  \end{equation}

  Lemma~\ref{lem:wtMbound} guarantees $\supp \wt{M}_{n, k, \delta}(t,
  \cdot, \cdot) \subset K_{\delta/2}$. Since $\supp \chi_{\delta}
  \subset K_{\delta/2}$, we also have $\supp (M_{n, k}(t, \cdot,
  \cdot) \chi_{\delta}) \subset K_{\delta/2}$. Therefore, by \eqref{eq:MwtM}, we have
  \begin{equation}\label{eq:diffsupp}
    \supp \Bigl(M_{n,k}(t, \cdot, \cdot) \chi_{\delta} - \wt{M}_{n, k,
      \delta}(t, \cdot, \cdot)\Bigr) \subset K_{\delta/2} \backslash K_{\delta}.
  \end{equation}

  Using a similar argument as in the proof of
  Proposition~\ref{prop:L2bound}, one has
  \begin{equation*}
    \begin{aligned}
      \Bigl\lVert \mc{P}_{t, K, \delta}^{\veps} & u_0^{\veps} -
      \wt{\mc{P}}_{t, K, \delta}^{\veps} u_0^{\veps}
      \Bigr\rVert_{L^2(\RR^d; \CC^N)} \\
      & \leq 2^{-d/2} \sum_{n=1}^N\sum_{k=0}^{K-1} \veps^k \norm{
        \bigl(M_{n,k}(t, \cdot, \cdot) \chi_{\delta} - \wt{M}_{n, k,
          \delta}(t, \cdot, \cdot)\bigr)
        \ms{F}^{\veps} u_0^{\veps}}_{L^2(\RR^{2d}; \CC^N)} \\
      & \leq 2^{-d/2} \sum_{n=1}^N\sum_{k=0}^{K-1} \veps^k \norm{
        \bigl(M_{n,k}(t, \cdot, \cdot) \chi_{\delta} - \wt{M}_{n, k,
          \delta}(t, \cdot,
        \cdot)\bigr)}_{L^{\infty}(\RR^{2d}; \CC^{N\times N})} \\
      & \hspace{10em} \times \norm{ \ms{F}^{\veps}
        u_0^{\veps}}_{L^2(K_{\delta/2} \backslash K_{\delta}; \CC^N)} \\
      & \leq C_{\delta, T, K} \norm{ \ms{F}^{\veps}
        u_0^{\veps}}_{L^2(K_{\delta/2} \backslash K_{\delta}; \CC^N)},
    \end{aligned}
  \end{equation*}
  where we have used \eqref{eq:diffsupp} in the second inequality, and
  \eqref{eq:boundM} and Lemma~\ref{lem:wtMbound} in the last
  inequality. The proof is concluded by noticing that
  \begin{equation*}
    \norm{ \ms{F}^{\veps}
      u_0^{\veps}}_{L^2(K_{\delta/2} \backslash K_{\delta}; \CC^N)}
    \leq \norm{ \ms{F}^{\veps}
      u_0^{\veps}}_{L^2(\RR^{2d} \backslash K_{\delta}; \CC^N)}
    = \Or(\veps^{\infty}),
  \end{equation*}
  by Definition~\ref{def:highfreq}.
\end{proof}

\section{Proof of the main results}\label{sec:proof}

We recall the energy estimate for linear strictly hyperbolic
system. The proof can be found for example in \cite{Ta:81}.
\begin{lemma}\label{lem:apriori} Given strictly hyperbolic
  system
  \begin{equation*}
    \partial_t u + \sum_{l=1}^d A_l(x) \partial_{x_l} u
    = f,
  \end{equation*}
  with initial condition $u(0, x) = u_0(x)$, where $A_l(x)$ are given
  as in \eqref{eq:hypersys}. For any $T>0$, there exists a constant
  $C_T$ such that
  \begin{equation*}
    \sup_{0\leq t\leq T} \norm{u(t,x)}_{L^2(\RR^d; \CC^N)}^2
    \leq C_T \biggl( \norm{u_0(x)}_{L^2(\RR^d; \CC^N)}^2 + \int_0^T
    \norm{f(s,x)}^2_{L^2(\RR^d; \CC^N)} \ud s \biggr).
  \end{equation*}
\end{lemma}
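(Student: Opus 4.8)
The plan is to prove Lemma~\ref{lem:apriori} by the classical energy method for strictly hyperbolic systems, constructing a pseudodifferential symmetrizer; this is the route taken in \cite{Ta:81}. Write the spatial operator as $A(x, D) = \sum_{l=1}^d A_l(x) \partial_{x_l}$, with symbol $\I\, a(x, \xi)$ where $a(x, \xi) = \sum_{l=1}^d \xi_l A_l(x)$. Strict hyperbolicity means precisely that, for $\xi \neq 0$, $a(x, \xi)$ has $N$ distinct real eigenvalues, so it is diagonalizable with a complete set of left and right eigenvectors as in \eqref{eigen:left}--\eqref{eigen:right}. First I would build from these eigenprojections a Hermitian, positive-definite matrix symbol $R(x, \xi)$, homogeneous of degree $0$ in $\xi$ and bounded above and below ($c_0 \Id_N \leq R(x, \xi) \leq C_0 \Id_N$), with the symmetrizing property that $R(x, \xi)\, a(x, \xi)$ is Hermitian for every $\xi \neq 0$. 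After a smooth modification near $\xi = 0$, quantize $R$ to an order-zero pseudodifferential operator $\mc{R} = R(x, D)$, taken self-adjoint.

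Next I would define the energy $E(t) = \Re\average{\mc{R}\, u(t), u(t)}$. By the sharp G\aa rding inequality and the two-sided bound on $R$, $E(t)$ is comparable to $\norm{u(t)}_{L^2(\RR^d; \CC^N)}^2$ up to lower-order (negative Sobolev) terms that are harmless in the subsequent Gronwall argument. Differentiating in time and inserting the equation $\partial_t u = - A(x, D) u + f$ gives
\begin{equation*}
  \frac{\ud}{\ud t} E(t) = - 2\Re\average{\mc{R}\, A(x, D) u, u}
  + 2\Re\average{\mc{R} f, u}.
\end{equation*}
The crucial observation is that the principal symbol of $\mc{R}\, A(x, D)$ is $\I R(x, \xi)\, a(x, \xi)$, which is skew-Hermitian by the symmetrizer property; hence the real part of the first term loses its top-order contribution, and the pseudodifferential calculus (adjoints and commutators being one order lower) bounds what remains by $C\norm{u}_{L^2}^2$. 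Together with Cauchy--Schwarz and Young's inequality on the forcing term, this yields the differential inequality $\frac{\ud}{\ud t} E(t) \leq C\bigl(E(t) + \norm{f(t)}_{L^2}^2\bigr)$.

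Finally, Gronwall's inequality applied to this estimate, followed by translating $E$ back to the $L^2$ norm via the equivalence from the second step and integrating the forcing in time, produces the claimed bound with $C_T$ depending only on $T$ and the coefficients $A_l$. I expect the main obstacle to be the first step together with the bookkeeping of the third: constructing a globally smooth, bounded symmetrizer (handling the singularity of the eigenprojections at $\xi = 0$ and requiring uniform control of $A_l$ and its derivatives) and verifying that every error term generated by the symbol calculus---including the G\aa rding remainder---is genuinely of order zero, so that it can be absorbed into the Gronwall constant. Since all of this is classical, I would ultimately defer the technical details to \cite{Ta:81}.
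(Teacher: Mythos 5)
Your proposal is correct and coincides with the paper's treatment: the paper does not prove Lemma~\ref{lem:apriori} at all, but simply cites \cite{Ta:81}, where exactly the classical pseudodifferential-symmetrizer energy argument you sketch (positive Hermitian order-zero symmetrizer $R$ with $Ra$ Hermitian, G{\aa}rding-type equivalence of the energy with $\norm{u}_{L^2}^2$, and Gronwall) is carried out. Your sketch of that argument, including the deferral of the symbol-calculus bookkeeping to \cite{Ta:81}, is a faithful account of the standard proof.
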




\begin{prop}\label{prop:term2}
  Under the same assumption of Theorem~\ref{thm:main}, for any $K \in
  \NN$, $T > 0$, there exists constants $C_{T, K}$ and $\veps_0>0$,
  such that for any $\veps \in (0, \veps_0]$,
  \begin{equation}
    \sup_{t \in [0, T]}
    \norm{\mc{P}_t u_{0}^{\veps} - \wt{\mc{P}}_{t,K, \delta}^{\veps}
      u_0^{\veps} }_{L^2(\RR^d; \CC^N)}
    \leq  C_{T, K} M \veps^{K-1}.
  \end{equation}
\end{prop}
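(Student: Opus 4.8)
The plan is to realize the difference as the solution of the hyperbolic system with a small forcing and a small initial error, and then to invoke the energy estimate of Lemma~\ref{lem:apriori}. Set $w^{\veps}(t,x) = \mc{P}_t u_0^{\veps}(x) - \wt{\mc{P}}_{t,K,\delta}^{\veps} u_0^{\veps}(x)$. Since $\mc{P}_t u_0^{\veps}$ solves \eqref{eq:hypersys} exactly, $w^{\veps}$ satisfies $(\partial_t + A_l(x)\partial_{x_l})w^{\veps} = -f^{\veps}$ with $f^{\veps} = (\partial_t + A_l(x)\partial_{x_l})\wt{\mc{P}}_{t,K,\delta}^{\veps}u_0^{\veps}$, and Proposition~\ref{prop:highorder} provides the uniform bound $\sup_{t\in[0,T]}\norm{f^{\veps}(t)}_{L^2(\RR^d)}\leq C_{T,K}M\veps^{K-1}$. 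Thus the forcing is already of the required size, and everything reduces to controlling the initial discrepancy $w^{\veps}(0) = u_0^{\veps} - \wt{\mc{P}}_{0,K,\delta}^{\veps}u_0^{\veps}$.

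To handle the initial data I would exploit that at $t=0$ the flow is trivial, $Q_n(0,q,p)=q$, $P_n(0,q,p)=p$, $S_n(0,q,p)=0$, so the phase $\Phi_n(0,\cdot)$ is branch-independent and the computation in the proof of Proposition~\ref{prop:L2bound} collapses to $\mc{I}_n^{\veps}(0,M) = 2^{-d/2}(\ms{F}^{\veps})^{\ast} M \ms{F}^{\veps}$. Using the normalization $\wt{\sigma}_{n,0,\delta}(0,q,p)=2^{d/2}$, the completeness relation $\sum_{n} R_n(q,p) L_n^{\TT}(q,p) = \Id_N$, and $(\ms{F}^{\veps})^{\ast}\ms{F}^{\veps} = \Id$ from Proposition~\ref{prop:isometry}, the leading-order term of $\wt{\mc{P}}_{0,K,\delta}^{\veps}u_0^{\veps}$ is exactly $(\ms{F}^{\veps})^{\ast}\chi_{\delta}\ms{F}^{\veps}u_0^{\veps}$. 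Its contribution to $w^{\veps}(0)$ is therefore $(\ms{F}^{\veps})^{\ast}(1-\chi_{\delta})\ms{F}^{\veps}u_0^{\veps}$, whose $L^2$ norm is at most $\norm{\ms{F}^{\veps}u_0^{\veps}}_{L^2(\RR^{2d}\backslash K_{\delta})} = \Or(\veps^{\infty})$ by Definition~\ref{def:highfreq}, since $1-\chi_{\delta}$ vanishes on $K_{\delta}$ and is bounded by $1$.

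The remaining pieces of $w^{\veps}(0)$ are $-2^{-d/2}\sum_{k=1}^{K-1}\veps^{k}(\ms{F}^{\veps})^{\ast}\bigl(\sum_{n}\wt{M}_{n,k,\delta}(0,\cdot,\cdot)\bigr)\ms{F}^{\veps}u_0^{\veps}$, and bounding these at order $\veps^{K-1}$ is the main obstacle: the symbols $\wt{M}_{n,k,\delta}(0)$ are supported in $K_{\delta/2}$, which overlaps the region $K_{\delta}$ on which $\ms{F}^{\veps}u_0^{\veps}$ concentrates, so a crude $L^{\infty}$ estimate via Proposition~\ref{prop:L2bound} and Lemma~\ref{lem:wtMbound} only gives $\Or(\veps)$. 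Here I would use that $\wt{\sigma}_{n,k,\delta}(0,q,p)=0$ for $k\geq 1$ by \eqref{eq:wtsigma_0}--\eqref{eq:wtsigma_k}, so that only the perpendicular parts $\wt{M}_{n,k,\delta}^{\perp}(0)$ of \eqref{eq:wtMkperp} survive, and then establish the reconstruction-of-identity property of the frozen Gaussian ansatz at $t=0$, namely that $\sum_{n}\wt{M}_{n,k,\delta}(0,\cdot,\cdot)$ contributes only at order $\Or(\veps^{\infty})$ on the microlocal support of $u_0^{\veps}$. This is the one genuinely delicate point and is where I expect to spend the effort.

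Finally, I would feed $\norm{w^{\veps}(0)}_{L^2}=\Or(\veps^{\infty})$ together with the forcing bound into Lemma~\ref{lem:apriori} applied to $w^{\veps}$, obtaining $\sup_{t\in[0,T]}\norm{w^{\veps}(t)}_{L^2}^2 \leq C_T\bigl(\norm{w^{\veps}(0)}_{L^2}^2 + \int_0^T \norm{f^{\veps}(s)}_{L^2}^2\ud s\bigr) \leq C_T\bigl(\Or(\veps^{\infty}) + T\,C_{T,K}^2 M^2 \veps^{2(K-1)}\bigr)$, which yields the claimed $\sup_{t\in[0,T]}\norm{w^{\veps}(t)}_{L^2}\leq C_{T,K}M\veps^{K-1}$ for $\veps$ small. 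Apart from the initial-data reconstruction, the argument is a direct combination of Proposition~\ref{prop:highorder} and the well-posedness of the strictly hyperbolic system.
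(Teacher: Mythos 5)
Your skeleton coincides exactly with the paper's proof: you set $w^{\veps}=\mc{P}_t u_0^{\veps}-\wt{\mc{P}}^{\veps}_{t,K,\delta}u_0^{\veps}$, obtain the forcing bound $C_{T,K}M\veps^{K-1}$ from Proposition~\ref{prop:highorder}, compute $\mc{I}^{\veps}_n(0,M)=2^{-d/2}(\ms{F}^{\veps})^{\ast}M\ms{F}^{\veps}$ as in the proof of Proposition~\ref{prop:L2bound}, identify the leading $t=0$ term with $(\ms{F}^{\veps})^{\ast}\bigl(\chi_{\delta}\ms{F}^{\veps}u_0^{\veps}\bigr)$ via $\wt{\sigma}_{n,0,\delta}(0,q,p)=2^{d/2}$ and $\sum_n R_n L_n^{\TT}=\Id_N$, bound the resulting $(\ms{F}^{\veps})^{\ast}\bigl((1-\chi_{\delta})\ms{F}^{\veps}u_0^{\veps}\bigr)$ by $\Or(\veps^{\infty})$ through Proposition~\ref{prop:isometry} and Definition~\ref{def:highfreq}, and close with the energy estimate of Lemma~\ref{lem:apriori}. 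All of these steps are correct and are precisely the steps of the paper's proof.

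The genuine gap is the step you yourself flag and defer: the contribution of the higher-order symbols $\wt{M}_{n,k,\delta}(0,\cdot,\cdot)=\wt{M}^{\perp}_{n,k,\delta}(0,\cdot,\cdot)$, $k\geq 1$, to $w^{\veps}(0)$. The mechanism you propose cannot work as stated. Definition~\ref{def:highfreq} produces $\Or(\veps^{\infty})$ smallness only for contributions carried by $\RR^{2d}\backslash K_{\delta}$; since $\wt{M}^{\perp}_{n,k,\delta}(0)$ is an $\veps$-independent symbol supported in $K_{\delta/2}\supset K_{\delta}$, where $\ms{F}^{\veps}u_0^{\veps}$ carries $\Or(1)$ mass, your claimed ``reconstruction of identity up to $\Or(\veps^{\infty})$ on the microlocal support'' is equivalent to the pointwise vanishing of $\sum_n\wt{M}^{\perp}_{n,k,\delta}(0,q,p)$ on $K_{\delta}$. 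Nothing in the construction enforces this: by \eqref{eq:wtMkperp} and \eqref{eq:L0dagger}, for $k=1$ the sum equals $\I\sum_n\sum_{m\neq n}\bigl(H_n-H_m\bigr)^{-1}R_m L_m^{\TT}\bigl(\mc{L}_{n,1}\wt{M}_{n,0,\delta}\bigr)(0)$, and the evolution \eqref{eq:wtsigma_k} cancels the cross terms $L_m^{\TT}(\mc{L}_{n,1}\wt{M}_{n,0,\delta})$ only inside the PDE residual through $\wt{M}^{\perp}$, not at $t=0$; the distinct gap denominators $(H_n-H_m)^{-1}$ also preclude cancellation across branches unless each cross term vanishes separately. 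Already for the constant-coefficient acoustic system of Example~\ref{example} one computes $\sum_j L_-^{\TT}\bigl(A_j-\partial_{p_j}H_+\bigr)\partial_{p_j}R_+ = 1/(2\abs{p})\neq 0$, a term entering $L_-^{\TT}(\mc{L}_{+,1}\wt{M}_{+,0,\delta})(0)$ on $K_{\delta}$, so $\wt{M}^{\perp}_{+,1,\delta}(0)\neq 0$ there. Consequently your route yields only $\norm{w^{\veps}(0)}_{L^2}=\Or(\veps M)$ (exactly the crude bound you mention via Proposition~\ref{prop:L2bound} and Lemma~\ref{lem:wtMbound}), which suffices for the stated $\veps^{K-1}$ only when $K\leq 2$. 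Note also that you will not find the missing cancellation lemma in the paper: its proof evaluates $\wt{\mc{P}}^{\veps}_{0,K,\delta}u_0^{\veps}$ using the $k=0$ symbol alone, taking $w^{\veps}(0)=(\ms{F}^{\veps})^{\ast}\bigl((1-\chi_{\delta})\ms{F}^{\veps}u_0^{\veps}\bigr)$ as exact, so the point you isolated is precisely where your attempt (commendably explicit) and the paper's proof (silent on these terms) part ways, and your proposal remains incomplete there.
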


\begin{proof}
  We denote
  \begin{equation*}
    e^{\veps}(t, x) = (\mc{P}_t u_{0}^{\veps})(x)
    - (\wt{\mc{P}}_{t,K, \delta}^{\veps}
    u_0^{\veps})(x).
  \end{equation*}
  Proposition~\ref{prop:highorder} implies
  \begin{equation*}
    \sup_{t \in [0, T]} \norm{ ( \partial_t + A_l(x) \partial_{x_l})
      e^{\veps}(t, \cdot)}_{L^2(\RR^d; \CC^N)}
    \leq C_{T, K} M \veps^{K-1}.
  \end{equation*}
  Notice that by the construction of filtered frozen Gaussian
  approximation,
  \begin{align*}
    (\wt{\mc{P}}_{0, K, \delta}^{\veps} u_0^{\veps})(x)& =
    \frac{1}{(2\pi \veps)^{3d/2}} \sum_{n=1}^N \int_{\RR^{3d}} e^{-
      \abs{x - q}^2/(2\veps) + \I p \cdot (x - q)/\veps -
      \abs{y-q}^2/(2\veps) - \I p \cdot (y - q)/\veps } \\
    & \hspace{8em} \times 2^{d/2} R_n(q, p) L_n^{\TT}(q, p)
    \chi_{\delta}(q, p) u_0^{\veps}(y) \ud q \ud p \ud y \\
    & = (\mc{F}^{\veps})^{\ast}( \chi_{\delta} \mc{F}^{\veps} u_0^{\veps}),
  \end{align*}
  where we have used that fact that for $(q, p) \in \RR^{2d}$ with
  $\abs{p} >0$,
  \begin{equation*}
    \sum_{n=1}^N R_n(q, p) L_n^{\TT}(q, p) = \Id_N.
  \end{equation*}
  This implies
  \begin{equation*}
    e^{\veps}(0, x) = u_0^{\veps}(x) -
    (\wt{\mc{P}}_{0, K, \delta}^{\veps} u_0^{\veps})(x)
    = (\mc{F}^{\veps})^{\ast}( (1 - \chi_{\delta}) \mc{F}^{\veps} u_0^{\veps}).
  \end{equation*}
  Hence, using Proposition~\ref{prop:isometry},
  \begin{equation*}
    \norm{e^{\veps}(0, \cdot)}_{L^2(\RR^d; \CC^N)}
    = \norm{(1 - \chi_{\delta}) \mc{F}^{\veps} u_0^{\veps}}_{L^2(\RR^{2d}; \CC^N)}
    \leq \norm{\mc{F}^{\veps} u_0^{\veps}}_{L^2(\RR^{2d}\backslash K_{\delta}; \CC^N)}
    = \Or(\veps^{\infty}).
  \end{equation*}
  The conclusion of the Proposition follows easily from
  Lemma~\ref{lem:apriori}.
\end{proof}

Finally, we conclude with the proof of Theorem~\ref{thm:main}.
\begin{proof}[Proof of Theorem~\ref{thm:main}]
  Triangle inequality gives
  \begin{equation*}
    \begin{aligned}
      \norm{\mc{P}_t u_0^{\veps} - \mc{P}_{t,K, \delta}^{\veps}
        u_0}_{L^2(\RR^d; \CC^N)} \leq & \norm{\mc{P}_t u_0^{\veps} -
        \wt{\mc{P}}_{t,K+1, \delta}^{\veps} u_0^{\veps}}_{L^2(\RR^d; \CC^N)} \\
      & + \norm{ \wt{\mc{P}}_{t,K+1, \delta}^{\veps} u_0^{\veps} -
        \mc{P}_{t, K+1, \delta}^{\veps} u_0^{\veps}}_{L^2(\RR^d;
        \CC^N)} \\
      & + \norm{ \mc{P}_{t,K+1, \delta}^{\veps} u_0^{\veps} -
        \mc{P}_{t, K, \delta}^{\veps} u_0^{\veps}}_{L^2(\RR^d;
        \CC^N)}.
    \end{aligned}
  \end{equation*}
  The first two terms are estimated by Lemma~\ref{lem:ptilde2} and
  Proposition~\ref{prop:term2}. For the last term, notice that by
  definition
  \begin{equation*}
    \mc{P}_{t,K+1, \delta}^{\veps} u_0^{\veps} - \mc{P}_{t, K,
      \delta}^{\veps} u_0^{\veps}
    = \mc{I}_n^{\veps}(t, \veps^K
    M_{n,K}(t, \cdot, \cdot) \chi_{\delta} ) u_0^{\veps},
  \end{equation*}
  and hence, using \eqref{eq:boundM} and
  Proposition~\ref{prop:L2bound}, we have
  \begin{equation*}
    \norm{ \mc{P}_{t,K+1, \delta}^{\veps} u_0^{\veps} -
      \mc{P}_{t, K, \delta}^{\veps} u_0^{\veps}}_{L^2(\RR^d;
      \CC^N)} \leq  C_{K, T} M \veps^K.
  \end{equation*}
  The Theorem is proved.
\end{proof}

\bibliographystyle{amsalpha}
\bibliography{fga}

\end{document}